\newtheorem{thm}{Theorem}[section]
\newtheorem{cor}[thm]{Corollary}
\newtheorem{prop}[thm]{Proposition}
\newtheorem{lem}[thm]{Lemma}
\newtheorem{introthm}{Theorem}
\newenvironment{customthm}[1]
  {\innercustomthm}
  {\endinnercustomthm}
\theoremstyle{definition}
\newtheorem{defn}[thm]{Definition}
\newtheorem{con}[thm]{Construction}
\newtheorem{hyp}[thm]{Hypothesis}
\theoremstyle{remark}
\newtheorem{rem}[thm]{Remark}
\numberwithin{equation}{section}
\newcommand{\la}{\langle}
\newcommand{\ra}{\rangle}
\newcommand{\N}{\mathbb{N}}
\newcommand{\R}{\mathbb{R}}
\newcommand{\eps}{\varepsilon}
\newcommand{\inj}{\xhookrightarrow{{\kern 1em}}}
\title{Quantitative marked length spectrum rigidity}
\author{Karen Butt}
\begin{document}

\begin{abstract} 
We consider a closed Riemannian manifold $M$ of 
negative curvature and dimension at least 3 with marked length spectrum sufficiently close (multiplicatively) to that of a locally symmetric space $N$. 
Using the methods in \cite{ham99symplectic}, we show the volumes of $M$ and $N$ are approximately equal. We then show the smooth map $F: M \to N$ constructed in \cite{BCGnegcurv} is a diffeomorphism with derivative bounds close to 1 and depending on the ratio of the two marked length spectrum functions. 
Thus, we refine the results in \cite{ham99symplectic} and \cite{BCGnegcurv}, which show $M$ and $N$ are isometric if their marked length spectra are equal.
We also prove a similar result for closed negatively curved surfaces using the methods of \cite{otalMLS} and \cite{pughc1}. 
\end{abstract}

\maketitle

%\tableofcontents

\section{Introduction} 
The marked length spectrum $\mathcal{L}_g$ 
of a closed Riemannian manifold $(M, g)$ of negative curvature is a function on the free homotopy classes of closed curves in $M$ which assigns to each class the length of its unique geodesic representative. Since the set of free homotopy classes of closed curves in $M$ can be identified with conjugacy classes in the fundamental group $\Gamma$ of $M$, we will write $\mathcal{L}_g(\gamma)$ for the length of the unique geodesic representative of the conjugacy class of $\gamma \in \Gamma$ with respect to the metric $g$.

To what extent the function $\mathcal{L}_g$ determines the metric $g$ is a question that has long been of interest. 
Notably, it has been conjectured that the marked length spectrum completely determines the metric, ie, if $g$ and $g_0$ are negatively curved metrics on $M$ satisfying $\mathcal{L}_{g} = \mathcal{L}_{g_0}$ then $g$ and $g_0$ are isometric; see \cite[Conjecture 3.1]{burnskatok}. 
This marked length spectrum rigidity is known to be true in certain cases, due to Otal and Croke (independently) in dimension 2 \cite{otalMLS, croke90},  Hamenst{\"a}dt in dimension at least 3 when one of the manifolds is assumed be locally symmetric (also using work of Besson--Courtois--Gallot)\cite{ham99symplectic, BCGnegcurv}, and Guillarmou--Lefeuvre if the metrics are assumed to be sufficiently close in a suitable $C^k$ topology \cite{GL19, GKL22}. There are other partial results, including generalizations beyond negative curvature; see the introduction to \cite{GL19} for a more extensive history of the problem.  

Still, even in the cases where rigidity does hold, there is more to be understood about to what extent the marked length spectrum determines the metric. 
Namely, if two homotopy-equivalent manifolds have marked length spectra which are not equal but are close, is there some sense in which the metrics are close to being isometric?

In the case of hyperbolic surfaces, this question was answered by Thurston in \cite{thurston98minimal}.
More specifically, if $(M, g)$ and $(N, g_0)$ are both surfaces of constant negative curvature, then the best possible Lipschitz constant for a map $F: M \to N$ in the same homotopy class as $f$ is precisely $\sup_{\gamma \in \Gamma} \frac{\mathcal{L}_{g_0}(f_* \gamma)}{\mathcal{L}_{g}(\gamma)}$.
In the general case of higher dimensions and variable negative curvature, the microlocal techniques of \cite{GL19, GKL22} provide explicit estimates 
%{\color{purple}
(in a suitable $C^k$ norm) 
%}
for how close the metrics are
in terms of the ratio $\frac{\mathcal{L}_{g_0}}{\mathcal{L}_{g}}$, or more precisely the geodesic stretch; in fact, their results hold more generally for non-positively curved metrics with Anosov geodesic flow. However, these estimates require $g$ and $g_0$ to be sufficiently close metrics (in some $C^k$ topology) on the same manifold.

%%%%%%%%%%%%%%%%%%%
In this paper, we provide new answers to this question in two cases: first, consider pairs of negatively curved metrics on a closed surface. 
To state our result precisely, let $(M, g_0)$ be a closed surface of curvature between $-b^2$ and $-a^2$ for some $0 < a \leq b$. Fix constants $0 < a \leq b$, and $\alpha, R > 0$.  
Let $\mathcal{U}_{g_0}^{1, \alpha}(a, b, R)$ denote the set of all $C^2$ Riemannian metrics $g$ on $M$ with curvatures between $-b^2$ and $-a^2$ so that $\Vert g - g_0 \Vert_{C^{1, \alpha}(M)} \leq R$. 
We show pairs of such spaces become more isometric as their marked length spectra get closer to one another, refining the main result in \cite{otalMLS}. 
\begin{introthm}\label{mainthm2}
Let $(M, g_0)$ and $\mathcal{U}_{g_0}^{1, \alpha} (a, b, R)$ as above.
Fix $L > 1$. 
Then there exists $\eps = \eps(L, a, b, R, \alpha)  > 0$ small enough so that for any pair $(M, g), (M, h) \in \mathcal{U}_{g_0}^{1, \alpha} (a, b, R)$ satisfying
\begin{equation}\label{LgLh}
1 - \eps \leq \frac{\mathcal{L}_{g}}{\mathcal{L}_{h}} \leq 1 + \eps,
\end{equation}
there exists an $L$-Lipschitz map $f: (M, g) \to (M, h)$. 
\end{introthm}

Second, consider the case where $(N, g_0)$ is a negatively curved locally symmetric space of dimension at least 3. Here, we quantify how close $g$ and $g_0$ are to being isometric by estimating the derivative of a map $F: M \to N$ in terms of $\eps$.
This is considerably stronger than Theorem \ref{mainthm2}, since we are able to determine how the Lipschitz constant depends on $\eps$.
This refines the rigidity result in \cite[Corollary to Theorem A]{ham99symplectic}, which corresponds to the case $\eps = 0$ in the theorem below. 

In (\ref{LgLh}), 
the marking relating the lengths of the two metrics is the identity map. 
In our next theorem, we consider a more general setup consisting of a homotopy equivalence $f: (M, g) \to (N, g_0)$  (where $M$ and $N$ need not be diffeomorphic a priori).
%{\color{purple} 
Note that since $M$ and $N$ are $K(\pi, 1)$ spaces, the existence of such a homotopy equivalence is equivalent to the existence of an isomorphism between the fundamental groups of $M$ and $N$; more precisely, the homotopy class of $f$ is determined by the isomorphism.
We will assume the marked length spectra $\mathcal{L}_g$ and $\mathcal{L}_{g_0}$ are close in the following sense:

\begin{hyp}\label{hyp:MLSandA}
Fix $\eps > 0$.
Suppose $f:(M, g) \to (N, g_0)$ is a homotopy equivalence of closed negatively curved manifolds and let $\Gamma$ denote the fundamental group of $M$.
Let $f_*$ denote the map on fundamental groups induced by $f$. 
Assume that 
\begin{equation}\label{MLSass}
1 - \eps \leq \frac{\mathcal{L}_{g_0}(f_* \gamma)}{\mathcal{L}_{g} (\gamma) } \leq 1 + \eps
\end{equation}
for all $\gamma \in \Gamma$. 
\end{hyp}

%{\color{purple} 
To obtain uniform estimates, we will also assume that $(M, g)$ and $(N, g_0)$ satisfy the following topological and geometric constraints.
\begin{hyp}\label{hyp:bdd-geom}
%Fix an integer $n \geq 3$, and constants $0 < a < b$, $i_0, D > 0$.
Fix $i_0, D > 0$ and $0 < a < b$. 
Suppose $(M, g)$ and $(N, g_0)$ are a pair of homotopy-equivalent closed Riemannian manifolds of dimension $n$ at least 3, diameter at most $D$, 
injectivity radius at least $i_0$,
and sectional curvatures contained in the interval $[-b^2, -a^2]$.
\end{hyp}
%}

%rem: injectivity radius of $M$ is controlled by that of $N$ in light of the marked length spectrum hypothesis

To state our result precisely, recall that the \emph{Anosov splitting} of the geodesic flow on the unit tangent bundle $T^1M$ refers to the flow-invariant decomposition of $T T^1M$ into the stable, unstable and flow directions; see the introduction to \cite{ham99symplectic}.
The stable and unstable bundles are in general $C^{\alpha_0}$ H{\"o}lder continuous for some $0< \alpha_0 \leq 1$, which we will refer to as the \emph{stable H{\"o}lder exponent} of $M$. 
%{\color{purple}
If $M$ is quarter-pinched, that is, $a^2/b^2 \in [1/4, 1]$, then the Anosov splitting is $C^1$ \cite{hirschpugh1975C1}. Otherwise, we can take $\alpha_0 = 2 {a/b}$ \cite[Corollary 1.7]{hass94}.
%}
%If $(M, g)$ has sectional curvatures contained in the interval $[-b^2, -a^2]$, the proof of \cite[Appendix, Proposition 4.4]{ballmann} shows this exponent depends on $a$ and $b$, as well as on the $C^1$ norm of the derivative of the geodesic flow; the latter can in turn be controlled by the size of $\Vert \nabla \mathcal{R} \Vert$, where $\mathcal{R}$ is the Riemann curvature tensor (see Lemma \ref{HolRic} below). 

\begin{introthm}\label{mainthm} 
Let $(M, g)$ and $(N, g_0)$ be a pair of homotopy-equivalent closed negatively curved Riemannian manifolds of dimension $n$ at least 3 and fundamental group $\Gamma$. %{\color{purple} 
Suppose $M$ and $N$ satisfy the bounded geometry hypothesis (Hypothesis \ref{hyp:bdd-geom}) for some $i_0, D > 0$ and $0 < a < b$. 
%}
Suppose further that $(N, g_0)$ is locally symmetric and that the curvature tensor $\mathcal{R}$ of $M$ satisfies $\Vert \nabla \mathcal{R} \Vert \leq R$ for some constant $R > 0$. 
%{\color{purple}
Let $\alpha_0 = \alpha_0(a, b) = \min(2 a/b, 1)$ denote the above-defined stable H{\"o}lder exponent of $(M, g)$.
%}
%Let $(N, g_0)$ be a locally symmetric space with sectional curvatures between $-b^2$ and $-a^2$. 

Then there exists $\eps_0 = \eps_0(a, b, R)$ such that for any $\eps \in (0, \eps_0]$ 
the following holds: Suppose there is a 
%{\color{purple} 
homotopy equivalence
%} 
$f: M \to N$ so that the marked length spectra of $M$ and $N$ satisfy Hypothesis \ref{hyp:MLSandA} for $\eps$ %\sout{and $A$} 
as above. 
Then, there is a smooth diffeomorphism $F: M \to N$, homotopic to $f$, 
such that for all $v \in TM$ we have
\begin{equation}\label{lipest}
(1 - C \eps^{\alpha}) \Vert v \Vert_g \leq \Vert dF (v) \Vert_{g_0} \leq (1 + C \eps^{\alpha}) \Vert v \Vert_g
\end{equation}
%{\color{purple} 
for some $C > 0$ depending only on $n, \Gamma, D, i_0, a, b, R$ and for any  $\alpha < (1 - \eps) \frac{a^2}{b} \frac{\alpha_0^2}{16 n}$.
%}
\end{introthm}

\begin{rem}
As in the previous theorem, we only assume the marked length spectra of our metrics are close; we do not assume the metrics themselves are close in any $C^k$ topology 
in constrast to \cite{GL19, GKL22}. 
However, the hypotheses of Theorem \ref{mainthm} hold in particular if $g$ is also a metric on $N$ satisfying $\Vert g - g_0 \Vert_{C^3(N)} \leq R_0$, where $R_0$ is any fixed, but not necessarily small, constant.
%{\color{purple} 
Moreover, the estimates for $\Vert d F(v) \Vert$ in \eqref{lipest} can in principle be computed explicitly in terms of $\eps$, the $C^3$ distance $\Vert g - g_0 \Vert_{C^3(N)}$, and the diameter, injectivity radius and sectional curvature bounds of the locally symmetric space $(N, g_0)$.
%}
\end{rem}

%}
%%%%%%
In \cite[Theorem A]{ham99symplectic}, Hamenst{\"a}dt proves that two negatively curved manifolds with the same marked length spectrum have the same volume, provided one of the manifolds has geodesic flow with $C^1$ Anosov splitting (instead of only H{\"o}lder); this condition holds in particular for locally symmetric spaces, as well as for quarter-pinched manifolds more generally \cite{hirschpugh1975C1}.

Thus, if $M$ and $N$ satisfy the assumptions of Theorem \ref{mainthm} for $\eps = 0$, they must have the same volume.
Then, since the marked length spectrum determines the topological entropy of the geodesic flow, the fact that the two manifolds are isometric follows from the celebrated entropy rigidity theorem of Besson--Courtois--Gallot \cite{BCGnegcurv, BCGGAFA}. 

To prove Theorem \ref{mainthm}, we start by proving an analogue of \cite[Theorem A]{ham99symplectic} under the assumption the marked length spectra satisfy equation (\ref{MLSass}), ie, we estimate the ratio ${\rm Vol}(M)/ {\rm Vol}(N)$ in terms of $\eps$. 

\begin{introthm}\label{thm:volest}
Let $(M, g)$ and $(N, g_0)$ be a pair of homotopy-equivalent closed Riemannian manifolds 
%{\color{purple} 
as in Hypothesis \ref{hyp:bdd-geom}.
%} 
Suppose further that the geodesic flow on $T^1 N$ has $C^1$ Anosov splitting, and that the curvature tensor $\mathcal{R}$ of $(M, g)$ satisfies $\Vert \nabla \mathcal{R} \Vert \leq R$ for some $R > 0$. 
Let $\alpha_0 = \min(2a/b, 1)$ denote the stable H{\"o}lder exponent of $M$. 

Then there exists $\eps_0 = \eps_0(a, b, R)$ such that for any $\eps \in (0, \eps_0]$  the following holds: Suppose there is $f: M \to N$ so that the marked length spectra of $M$ and $N$ satisfy Hypothesis \ref{hyp:MLSandA} for $\eps$ as above.
Then
\[ (1 - C \eps^{\alpha}) {\rm Vol}(M) \leq {\rm Vol}(N) \leq (1 + C \eps^{\alpha}) {\rm Vol}(M) \]
%{\color{purple}
for some constant $C = C(n, \Gamma, D,  i_0, a, b, R) > 0$ and any $\alpha < (1 - \eps) \frac{a^2}{b} {\alpha_0^2}$.
%}
\end{introthm}

\begin{rem}
If $N$ is locally symmetric, then ${\rm Vol}(N) \leq (1 + \eps)^n {\rm Vol}(M)$ follows from Lemma \ref{entropy} and the proof of the main theorem in \cite{BCGnegcurv}. (See Remark \ref{uppervolest} for more details.) However, the lower bound for ${\rm Vol}(N)/ {\rm Vol}(M)$ in Theorem \ref{thm:volest} is also crucial for the proof of Theorem \ref{mainthm}. 
\end{rem}

\begin{rem}
If $\dim M = \dim N = 2$, then $(1 - \eps)^2 {\rm Vol}(N) \leq {\rm Vol}(M) \leq (1 + \eps)^2 {\rm Vol}(M)$ follows from work of Croke--Dairbekov \cite[Theorem 1.1]{croke2004lengths}. 
\end{rem}

\subsection*{Outline}
We prove Theorem \ref{thm:volest} in Section \ref{volest}, which occupies the majority of this paper.
As in \cite{ham99symplectic}, we study the Liouville measure on the unit tangent bundle of $M$, which is locally a product of the Liouville current on the space of geodesics and the Lebesgue measure on orbits of the geodesic flow \cite[Theorem 2.1]{kaimanovich90invariant}. 
In Sections \ref{sec:cr} and \ref{sec:LC}, we generalize results in \cite{otalMLS} and \cite{ham99symplectic}, respectively, 
to ``coarsely" control the Liouville currents of $M$ and $N$; more precisely, we show the ratios of these measures is close to 1 (in terms of $\eps$) on sets whose size is small, but fixed, in terms of $\eps$. 
We also need to carefully control the time component of the Liouville measure, since when $\mathcal{L}_g$ does not coincide with $\mathcal{L}_{g_0}$, the corresponding geodesic flows are not conjugate. 
In Section \ref{almostconj}, we use Hypothesis \ref{hyp:MLSandA} to control the speed of the time changes by combining the constructions in \cite{bourdon, gromov3rmks} (Theorem \ref{thm:oe}). 

In Section \ref{BCGsection}, we find explicit estimates for the derivative of the natural map of Besson--Courtois--Gallot \cite{BCGnegcurv} when the two metrics have approximately equal volumes and entropies (Theorem \ref{thm:BCGstab}). By Theorem \ref{thm:volest}, the hypotheses of Theorem \ref{thm:BCGstab} hold in particular when the length functions are related as in Hypothesis \ref{hyp:MLSandA}. This proves the main theorem (Theorem \ref{mainthm}).

Finally, we prove Theorem \ref{mainthm2} in Section \ref{surfaces}.
We prove the result by contradiction, obtaining a convergent sequence of counterexamples. We then use results in \cite{pughc1} together with the methods in \cite{otalMLS} to show the limits are isometric. 

\subsection*{Acknowledgements} I am very grateful to Ralf Spatzier and Amie Wilkinson for many helpful discussions and for help reviewing this paper.
I thank Alex Wright for suggesting the question. I also thank David Constantine, Thang Nguyen and Yuping Ruan for helpful conversations. 
Finally, I would like to thank the anonymous reviewers for suggestions for improvement. 
This research was supported in part by NSF grants DMS-2402173, DMS-2003712, and DMS-1607260.
 
\section{Volume estimate} \label{volest}
This section is devoted to proving Theorem \ref{thm:volest}.
%As in the case $\eps = 0$ (which is \cite[Theorem A]{ham99symplectic}), 
The main tool is the \emph{Liouville measure}, whose construction and properties we now briefly recall.

\subsection*{Liouville measure}
In order to determine the volume of $(M, g)$, it suffices to determine the volume of the unit tangent bundle $T^1 M$ with respect to the \emph{Liouville measure} $\mu^M$. 
This measure is the volume induced by the Sasaki metric on $M$; 
locally it is the product of the Riemannian volume associated to the metric $g$ on the base $M$ and Lebesgue measure on the spherical fibers.
%As such, the total Liouville volume of $T^1 M$ is the product of the Riemannian volume of $M$ and the volume of the unit sphere in dimension $n-1$.
Thus, estimating the ratio of the volumes of $M$ and $N$ is equivalent to estimating the ratio of the Liouville volumes of their respective unit tangent bundles.

To do so, Hamenst{\"a}dt uses the following description of the Liouville measure which is more compatible with the geodesic flow. Let $\omega$ be the 1-form on $T^1 M$ obtained by pulling back the canonical 1-form on $T^* M$ to $TM$ via the identification induced by the Riemannian metric and then restricting to $T^1 M$. 
%Let $X$ be the vector field on $T^1 M$ which generates the geodesic flow $\phi^t$. 
%Then $\omega$ has the property $\omega (X) \equiv 1$. 
Then $\omega$ and $d \omega$ are both flow-invariant, $\omega$ is dual to the generating vector field for the geodesic flow, and $\omega$ is a contact form, meaning $\omega \wedge (d \omega)^{n-1}$ is a volume form on $T^1 M$.
The associated measure on $T^1 M$ coincides with the \emph{Liouville measure}, up to a dimensional constant (see, eg, \cite[Lemma 1.7]{keiththesis} for a proof). 

From the above description of the Liouville measure, it is apparent that it is geodesic-flow--invariant; moreover, it locally decomposes as a product of $\omega$ in the flow direction and $d \omega^{n-1}$ on transversals to the flow.
In fact, $d \omega^{n-1}$ can be viewed as a measure on the \emph{space of geodesics of $\tilde M$}.
This is defined to be the quotient of $T^1 \tilde M$ by the action of the geodesic flow, and can also be identified with the set $\partial^2 \tilde M$ of pairs of distinct points in the boundary. 
%(see Construction \ref{bdrymap}). 
%Since $d \omega$ is flow-invariant, it descends to a 2-form on the space of geodesics $\partial^2 \tilde M$. 
%This form is also symplectic, meaning $(d \omega)^{n-1}$ is a volume form on $\partial^2 \tilde M$. 
The associated measure is called the \emph{Liouville current}, which we will denote by $\lambda^M$. 
(This is a special case of a more general correspondence between geodesic-flow--invariant measures on $T^1 M$ and $\pi_1(M)$-invariant measures on $\partial^2 \tilde M$; see \cite[Theorem 2.1]{kaimanovich90invariant}.)

\subsection*{Outline of the proof of Theorem \ref{thm:volest}}
We will repeatedly use the following standard construction, part of which can be found in \cite[Section 4]{BCGnegcurv}:
\begin{con}\label{bdrymap}
Let $f: M \to N$ be a homotopy equivalence of negatively curved manifolds. 
Let $\partial \tilde M$ denote the visual boundary of $\tilde M$. 
We can construct a map $\overline{f}: \partial \tilde M \to \partial \tilde N$ such that for all $\gamma \in \Gamma$ and all $\xi \in \partial \tilde M$ we have $\overline{f} (\gamma.\xi) = (f_*\gamma). \overline{f}(\xi)$.
Indeed, the homotopy equivalence $f: M \to N$ can be lifted to a $\Gamma$-equivariant map $\tilde f: \tilde M \to \tilde N$ such that $\tilde f$ is additionally a quasi-isometry (details in \cite[Section 4]{BCGnegcurv}). 
Hence $\tilde f$ induces a $\Gamma$-equivariant map $\overline{f}$ between the boundaries $\partial \tilde M$ and $\partial \tilde N$. 

Now recall the space of geodesics of $\tilde M$ is the quotient of $T^1 \tilde M$ obtained by identifying any two unit tangent vectors on the same orbit of the geodesic flow. 
This space can be identified with the set $\partial^2 \tilde M$ of pairs of distinct points in $\partial \tilde M$ by associating the equivalence class of the unit tangent vector $v$ with the pair $(\pi(v), \pi(-v))$ of its forward and backward endpoints. 
Thus, the product $\overline f \times \overline f$ gives a map between the spaces of geodesics of $\tilde M$ and $\tilde N$. For notational simplicity, we will write this map as $\overline{f}: \partial^2 \tilde M \to \partial^2 \tilde N$. 
\end{con}

Note the case $\eps = 0$ of Theorem \ref{thm:volest} is Theorem A in \cite{ham99symplectic}. 
We follow the same overall approach which we now describe.
If $M$ and $N$ have the same marked length spectrum, it is well-known that the geodesic flows $\phi^t$ on $T^1 M$ and $\psi^t$ on $T^1 N$ are \emph{conjugate}, which means there is a homeomorphism $\mathcal{F}: T^1 M \to T^1 N$ which satisfies 
$
\mathcal{F}(\phi^t v) = \psi^t \mathcal{F}(v)
$
for all $v \in T^1 M$ and all $t \in \R$ (see \cite{hamconj, bourdon}; this also follows by combining the construction in \cite{gromov3rmks} with Livsic's theorem, as outlined in \cite[2.2]{hasskatok}). 
Moreover, $\mathcal{F}$ is compatible with the boundary map in Construction \ref{bdrymap} in the sense that $\pi \circ \mathcal{F} = \overline{f} \circ \pi$.

Hamenst{\"a}dt's proof of \cite[Theorem A]{ham99symplectic} consists of showing that $\mathcal{F}_* \mu^M = \mu^N$.
%(this is simpler when both have $C^1$ anosov splitting
Since $\mathcal{F}$ sends geodesics to geodesics in a \emph{time-preserving} manner, it suffices to show the map $\overline{f}: \partial^2 \tilde M \to \partial^2 \tilde N$ preserves the Liouville current, ie, $\overline{f}_* \lambda^M = \lambda^N$. 
The key tool used to relate equality of the marked length spectra to equality of the Liouville currents is the \emph{cross-ratio}. 
(See Section \ref{sec:cr} for the definition of the cross-ratio and how it relates to the marked length spectrum, and Section \ref{sec:LC} for an explanation of how it relates to the Liouville current.)

In order to relate the Liouville currents in the case $\eps > 0$, 
we start by generalizing \cite[Theorem 2.2]{otalsymplectic}, ie, we determine how perturbing the marked length spectrum as in (\ref{MLSass}) affects the cross-ratio (Proposition \ref{crmain}); to do so, we also use many techniques from \cite{bourdon}. 
Using Proposition \ref{crmain}, we refine the arguments in \cite{ham99symplectic} to find certain families of ``cubes" $K \subset \partial^2 \tilde M$ so that the ratio of the measures $\lambda^M(K)$ and $\lambda^N(\overline{f}(K))$ is close to 1 in terms of $\eps$ (Proposition \ref{prop:LC}).
We emphasize that we can only estimate this ratio for cubes whose diameter is \emph{bounded below in terms of $\eps$}.
Indeed, the measures $\lambda^M$ and $\overline{f}_* \lambda^N$ are not in general absolutely continuous (unless they are the same up to a multiplicative constant), so it is not possible to compare the volumes of arbitrarily small cubes in our case. 

We then augment these cubes in the flow direction to obtain $(2n+1)$-dimensional cubes $K'$ in $T^1 M$.
Although the flows $\phi^t$ and $\psi^t$ are in general not conjugate when $\eps > 0$,
so long as
%we find a map $\mathcal{F}: T^1 M \to T^1 N$ which is almost a conjugacy (Theorem \ref{thm:oe}). 
%in the sense that it reparametrizes flow lines by a function which is close to the identity in terms of $\eps$.
%If the lengths of closed geodesics of $M$ and $N$ are instead $\eps$-close as in (\ref{MLSass}), the geodesic flows may not be conjugate. 
$M$ and $N$ are negatively curved and have isomorphic fundamental groups, their geodesic flows are \emph{orbit-equivalent} \cite{gromov3rmks}. 
This means there exists a homeomorphism $\mathcal{F}: T^1  M \to T^1  N$, and a function (cocycle) $a(t,v)$ such that
\begin{equation}\label{eq:oeq}
\mathcal{F}(\phi^t v) = \psi^{a(t,v)} \mathcal{F}(v)
\end{equation}
for all $t \in \R, v \in T^1 M$.
(We also have $\pi \circ \mathcal{F} = \overline{f} \circ \pi$ as before.)
In Theorem \ref{thm:oe}, we show that (\ref{MLSass}) implies the time change $a(t,v)$ is close to $t$ in terms of $\eps$. 
This allows us to estimate the ratios $\mu^M(K')/\mu^N(\mathcal{F}(K'))$. 
To complete the proof of the Theorem \ref{thm:volest}, we estimate the error in approximating $T^1 M$ from above and below by these cubes of small, but fixed, size (Proposition \ref{cubeerror}). 
This requires a refined understanding of the shape, and not just the size, of the cubes $K$.

\subsection{The cross-ratio}\label{sec:cr}
We now define the cross-ratio associated to any negatively curved Riemannian manifold $(M, g)$. 
This concept will not only play a key role in our above-mentioned Liouville current comparison (Proposition \ref{prop:LC}), but also in the construction of our ``almost conjugacy" (Theorem \ref{thm:oe}), where we generalize the construction in \cite{bourdon}.  

Let $\tilde M$ denote the universal cover of $M$
%Let $p: \tilde M \to M$ be the universal cover of $M$ 
and let $\partial \tilde M$ be the visual boundary of $\tilde M$. 
Let $\pi: T^1 \tilde M \to \partial \tilde M$ denote the map which sends $v$ to the forward boundary point of the geodesic determined by $v$. 
Let $\partial ^4 \tilde M$ denote pairwise distinct quadruples of points in $\partial \tilde M$. 

\begin{defn}\cite[Lemma 2.1]{otalsymplectic}
Let $(\xi, \xi', \eta, \eta') \in \partial^4 \tilde M$.
Let $a_i, b_i, c_i, d_i \in \tilde M$ be sequences converging to $\xi, \xi', \eta, \eta'$ respectively. Define 
\begin{equation}\label{}
[\xi, \xi', \eta, \eta'] = \lim_{i \to \infty} d(a_i, c_i) + d(b_i, d_i)  - d(a_i, d_i) - d(b_i, c_i),
\end{equation}
where $d$ is the Riemannian distance function.
By \cite[Lemma 2.1]{otalsymplectic}, this limit exists and is independent of the chosen sequences $a_i, b_i, c_i, d_i$. 
We call $[ \cdot \, ,  \cdot \, ,  \cdot \, ,  \cdot ]$ the \textit{cross-ratio}. 
\end{defn}
%
%

%Note this is the logarithm of the cross-ratio associated to the length cocycle in \cite{ham97crossratio}. 
%

Theorem 2.2 in \cite{otalsymplectic} shows the cross-ratio is completely determined by the marked length spectrum, and the argument is not specific to dimension 2. 
The proof shows in particular that given $\xi, \xi', \eta, \eta' \in \partial^4 \tilde M$ and $\delta > 0$, 
there exist free homotopy classes $\gamma_1, \dots, \gamma_4$ such that
\begin{equation}\label{equation:crvsl}
 |[\xi, \xi', \eta, \eta'] -  (\mathcal{L}_g (\gamma_1) + \mathcal{L}_g(\gamma_2) - \mathcal{L}_g(\gamma_3) - \mathcal{L}_g(\gamma_4))| < \delta. 
\end{equation}
Hence if $g$ and $g_0$ are such that $\mathcal{L}_g(\gamma) = \mathcal{L}_{g_0}(f (\gamma))$ for all free homotopy classes $\gamma$, it follows that $[\xi, \xi', \eta, \eta'] = [\overline f(\xi), \overline f(\xi'), \overline f(\eta), \overline f(\eta')]$ for all $\xi, \xi', \eta, \eta' \in \partial^4 \tilde M$. 

In this section we investigate what happens if instead $\mathcal{L}_g$ and $\mathcal{L}_{g_0} \circ f$ are $\eps$-close. 
We start with the following straightforward observation.

\begin{lem}\label{observation}
Suppose that 
\[ 1 - \eps \leq \frac{\mathcal{L}_{g_0}(f(\gamma))}{\mathcal{L}_g(\gamma)} \leq 1 + \eps \]
for all $\gamma \in \Gamma$. 
Fix $L>0$ and suppose $\gamma_1, \dots, \gamma_4$ are such that $\mathcal{L}_g(\gamma_i) \leq L$ for $i = 1, \dots, 4$. 
Then
\[ 1 - 4L \eps \leq 
\frac
{\mathcal{L}_{g_0}(f (\gamma_1)) + \mathcal{L}_{g_0}(f(\gamma_2)) - \mathcal{L}_{g_0}(f(\gamma_3)) - \mathcal{L}_{g_0}(f(\gamma_4))}
{\mathcal{L}_g(\gamma_1) + \mathcal{L}_g(\gamma_2) - \mathcal{L}_g(\gamma_3) - \mathcal{L}_g(\gamma_4)} 
\leq 1 + 4L \eps. \]
\end{lem}

Consequently, the above ratio is close to 1 when $L$ is not too large in terms of $\eps$, say $L = \eps^{-1/2}$ for concreteness.
This suggests that the quantity $[\xi, \xi', \eta, \eta']$ is multiplicatively close to $[\overline f(\xi), \overline f(\xi'), \overline f(\eta), \overline f(\eta')]$ whenever these two quantities can be approximated as in (\ref{equation:crvsl}) by lengths of closed geodesics $\gamma_i$ with $\mathcal{L}_g(\gamma_i) \leq L$. 
To make this precise, we first explicitly determine how large the $\mathcal{L}_{g}(\gamma_i)$ need to be in terms of $\delta$ so that (\ref{equation:crvsl}) holds (see Proposition \ref{proposition:crMLSshort}).
In other words, the size of $L = \eps^{-1/2}$ limits the accuracy $\delta = \delta(\eps)$ of the approximation achievable in (\ref{equation:crvsl}).

As such, we prove that $[\xi, \xi', \eta, \eta']$ is multiplicatively close to $[\overline f(\xi), \overline f(\xi'), \overline f(\eta), \overline f(\eta')]$ for quadruples of points $(\xi, \xi', \eta, \eta')$ which have the property that $[\xi, \xi', \eta, \eta']$ is bounded away from zero, ie, $[\xi, \xi', \eta, \eta'] \geq \delta(\eps)$. 
In addition, for our proof of Theorem \ref{thm:volest}, we only need this multiplicative closeness of cross-ratios when 
the distances between the four geodesics $[\xi,\eta'], [\xi',\eta], [\xi, \eta], [\xi',\eta']$ is not too large.
More precisely, we show the following.

\begin{prop}\label{crmain} 
%{\color{purple}
Let $(M, g)$ and $(N, g_0)$ as in Hypotheses \ref{hyp:MLSandA} and \ref{hyp:bdd-geom} for some $\eps > 0$, $i_0, D > 0$ and $0 < a < b$.
%}
Let $\overline{f}: \partial \tilde M \to \partial \tilde N$ as in Construction \ref{bdrymap}.
%{\color{teal}
%Let $\mathcal{F}: T^1 M \to T^1 N$ be an orbit equivalence of geodesic flows as in \eqref{eq:oeq}, and suppose further there are constants $\overline{C} > 0$ and $0 < \alpha \leq 1$ such that $d(\mathcal{F}(v), \mathcal{F}(w)) \leq \overline{C} d(v, w)^{\alpha}$ for all $v, w \in T^1 M$. 
%}

Then there exist constants $r = r(a)$, 
%{\color{purple}
$\delta_0 = \delta_0(\eps, n, \Gamma, D, i_0, a, b)$
%} 
and $\eps' = \eps'(\delta_0, \eps)$ such that the following holds:
For any $(\xi, \xi', \eta, \eta') \in \partial^4 \tilde M$ 
such that the pairwise distances between the geodesics $[\xi,\eta']$, $[\xi',\eta]$, $[\xi,\eta]$, $[\xi',\eta']$ are all less than $r$, 
and such that 
$[ \xi, \xi', \eta, \eta'] \geq \delta_0$, we have
\[ (1 - \eps') [\xi, \xi', \eta, \eta'] \leq [\overline{f}(\xi), \overline{f}(\xi'), \overline{f}(\eta), \overline{f}(\eta')] \leq (1 + \eps') [\xi, \xi', \eta, \eta']. \]
More specifically, we can take $\delta_0$ to be of the form $\delta_0 = C \exp(-a \alpha \eps^{-1/2}) \leq C \eps^{a \alpha}$  and $\eps' = C \delta_0 + 4 \sqrt{\eps} \leq 2C \delta_0$, for some constant $C = C(n, \Gamma, D, i_0, a, b)$ 
%{\color{purple} 
and any $\alpha < (1 - \eps) a/b$.
%}
\end{prop}

%{\color{blue}
%\begin{rem}
%The dimension at least 3 hypothesis is necessary only to say what $C$ depends on. See Section \ref{} for further details. 
%\end{rem}
%}

%{\color{teal}
%\begin{rem}
%The orbit equivalence $\mathcal{F}: T^1 M \to T^1 N$ constructed in \cite{gromov3rmks} does in fact satisfy the H{\"o}lder control stipulated in the above proposition; see, for instance, \cite[Proposition 2.4]{butt22finite}. In Theorem \ref{thm:oe} below, we will further refine the control of $\overline{C}$ and $\alpha$ in terms of geometric and topological data. 
%\end{rem}
%}

\begin{rem}
It is natural for the above ratio of cross-ratios to \emph{not} remain bounded as $[\xi,\xi',\eta, \eta']$ approaches zero. 
For instance, when $n = 2$, Proposition 2.4 together with (\ref{eq:LCdim2}) means that the Liouville currents of $M$ and $N$ are mutually singular, which is indeed the case by \cite{otalMLS} unless $M$ and $N$ are isometric.
\end{rem}

In order to prove Proposition \ref{crmain}, we start by investigating the relation between $L$ and $\delta$ in (\ref{equation:crvsl}).

\begin{prop}\label{proposition:crMLSshort}
There exist constants  $C = C(a, b, {\rm diam}(M))$ 
and $r = r(a)$ such that the following holds:
%together with a small enough $R = R(a)$ so that the following holds: 
For any $\delta > 0$ and 
for any $(\xi, \xi', \eta, \eta') \in \partial^4 \tilde M$ such that the pairwise distances between the geodesics $[\xi,\eta'], [\xi',\eta], [\xi,\eta], [\xi',\eta']$ are all less than $r$, 
%and, in addition, $[\xi, \xi', \eta, \eta'] \leq R$, 
there exist free homotopy classes $\gamma_i$, $i = 1, 2, 3, 4$, with $\mathcal{L}_g(\gamma_i) \leq L: = \frac{1}{a} \log (C \delta^{-1})$ such that 
\[ | [\xi, \xi', \eta, \eta'] - (\mathcal{L}_g(\gamma_1) + \mathcal{L}_g (\gamma_2) - \mathcal{L}_g(\gamma_3) - \mathcal{L}_g(\gamma_4)) | < \delta. \]
\end{prop}

For this we will use many ideas from our paper \cite{butt22finite}, which pertains to controlling the marked length spectrum only using information about ``short geodesics" of length less than some fixed $L$. 
Let $W^{si}$ for $i = s, u$ denote the strong stable and strong unstable foliations for the geodesic flow $\phi^t$ on $T^1 M$. 
Recall that their leaves have the following geometric description (see \cite{ballmann}). 
Let $v \in T^1 \tilde M$. Denote by $p \in \tilde M$ the footpoint of $v$ and by $\xi \in \partial \tilde M$ the forward projection of $v$ to the boundary at infinity. 
Let $B_{\xi, p}$ denote the associated Busemann function on $\tilde M$ and let $H_{\xi, p}$ denote the horosphere given by the zero set of $B_{\xi, p}$. Then the lift of the leaf $W^{ss}(v)$ to $T^1 \tilde M$ is given by $\{ - {\rm grad}B_{\xi, p} (q) \, | \, q \in H_{p, \xi} \}$. Analogously, the lift of $W^{su}(v)$ is given by $\{ {\rm grad} B_{\eta, p} (q) \, | \, q \in H_{\eta, p} \}$, where $\eta$ denotes the forward projection of $-v$ to $\partial \tilde M$.

For $v \in T^1 \tilde M$ and $R > 0$, let $W^{si}_R (v) = W^{si}(v) \cap B(v, R)$, where $B(v, R)$ denotes a ball of radius $R$ centered at $v$ with respect to the Sasaki metric on $T^1 \tilde M$.
(See, for instance, \cite[Exercise 3.2]{docarmo} for the definition of the Sasaki metric on $TM$ induced by the Riemannian metric $g$ on $M$.)

\begin{lem}\label{lemma:locprodsize}
Fix $D > 0$ and let $u_1, u_2 \in T^1 \tilde M$ with $d(u_1, u_2) \leq D$.
Suppose, in addition, that $u_1$ and $-u_2$ do not lie on the same oriented geodesic.
Then there exists some constant $C$, depending only on $a,b,D$, together with a time $\sigma \in [0,D]$, such that
\[ W^{ss}_C (u_1) \cap W^{su}_C(\phi^{\sigma} u_2) \neq \emptyset. \]
\end{lem}

\begin{rem}
If one is not concerned about which geometric quantities the above constant $C$ depends on, then the statement follows from the standard fact that Anosov flows have local product structure; see, for instance, \cite[Proposition 6.2.2]{fisherhass}.
\end{rem}

\begin{proof}
%Let $W^u$ denote the weak stable foliation of $\phi^t$ on $T^1M$, that is, the leaves of $W^u(v)$ are given by $\cup_{t \in \R} \phi^t W^{su}(v)$. 
%
%First note that we can assume that either $\pi(u_1) \neq \pi(-u_2)$ or $\pi(-u_1) \neq \pi(u_2)$; if not, $u_1$ and $-u_2$ lie on the same oriented geodesic, and the result is immediate.
%
By assumption, we have either $\pi(u_1) \neq \pi(-u_2)$ or $\pi(-u_1) \neq \pi(u_2)$ .
Without loss of generality, we will assume that $\pi(u_1) \neq \pi(-u_2)$.
Then there exists a point $u$, often denoted by $[u_1, u_2]$, together with a time $\sigma = \sigma(u_1, u_2)$, such that  
$u = W^{ss}(u_1) \cap \phi^{\sigma} W^{su}(u_2)$.
To see this geometrically, let $p_1$ and $p_2$ denote the footpoints of $u_1$ and $u_2$, respectively. 
Let $\xi = \pi(u_1)$ and $\eta = \pi(-u_2)$. 
Let $p_1' \in H_{p_1, \xi}$ and $p_2' \in H_{p_2, \eta}$ be points on the geodesic determined by $\xi$ and $\eta$. 
Let $u_1' = -{\rm grad} B_{\xi, p_1}(p_1')$ and $u_2' = {\rm grad} B_{\eta, p_2}(p_2')$.
Let $\sigma = \sigma(u_1, u_2)$ be such that $\phi^{\sigma} u_2' = u_1'$. 
Then $u_1' = W^{ss}(u_1) \cap \phi^{\sigma} W^{su} (u_2)$. 

By the above construction, we have $|\sigma| = d(p_1', p_2') \leq D$; see also \cite[Lemma 3.1]{butt22finite}.
By \cite[Proposition 3.3]{butt22finite} 
we see that $d(p_1, p_1')$ and $d(\phi^{\sigma} p_2, \phi^{\sigma} p_2')$ are both bounded above by $C' d(u_1, u_2)$, where $C'$ is a constant depending only on $a,b$.
Letting $C = 2 C' D$ completes the proof. 
\end{proof}

\begin{lem}\label{lemma:spec-const}
Let $(M,g)$ be a closed Riemannian manifold with sectional curvatures in the interval $[-b^2, -a^2]$ and diameter bounded above by $D$. 
Given $\delta > 0$,
there exists a constant $C = C(a,b,D)$ 
such that the following holds:
Let $S = \frac{1}{a} \log (C/ \delta)$. Then
for every $v \in T^1 M$ and every $T_0 > 0$, there exists $w \in T^1 M$, together with a time $T \in [0,S]$, satisfying
\begin{enumerate}
\item $d(\phi^t v, \phi^t w) < \delta$ for all $ t \in [-T_0,T_0]$,
\item $\phi^{2(T_0+T)} w = w$. 
\end{enumerate}
\end{lem}

\begin{rem}\label{rem:spec}
Since the geodesic flow of $(M, g)$ satisfies the specification property \cite[18.3]{hasskatok}, it follows that given $\delta > 0$, there is an $S$ depending on $\delta$ and $(M,g)$ satisfying the conclusion of the above lemma.
We show more specifically that there is a \emph{uniform} choice of $S$ which works for all $(M, g)$ satisfying the hypotheses of the above lemma. 
\end{rem}

\begin{proof}
Fix $\delta' > 0$ (which will later be chosen based on $\delta$). 
Set $v_1 = \phi^{-T_0} v$ and $v_2 = \phi^{T_0} v$. 
Let $\tilde{v_1}$ be a lift of $v_1$ to $T^1 \tilde M$. 
Then of course $\phi^{2 T_0} \tilde{v_1}$ is a lift of $v_2$, but there is moreover a deck transformation $\gamma \in \Gamma$ such that $\tilde v_2 := \gamma  \phi^{2 T_0} \tilde{v_1}$ is distance less than $D$ away from $\tilde v_1$. 
For $j = 1,2$, let $P(\tilde{v_j}, \delta') = \cup_{v' \in W_{\delta'}^{ss}(\tilde{v_j})} W_{\delta'}^{su}(v') \subset T^1 \tilde M$ be a local transversal to the geodesic flow $\phi^t$. Let $R(\tilde v_j, \delta') = \cup_{t \in (-\delta'/2, \delta'/2)} \phi^t P(\tilde v_j, \delta) \subset T^1 \tilde M$ be a local flow box. 

Let $R(v_j, \delta')$ denote the projection of $R(\tilde{v_j}, \delta')$ to $T^1 M$. 
We claim there is a constant $S'$, depending only on $\delta',a,b,D$, such that for any $R(v_1, \delta')$,  $R(v_2, \delta')$ as above, 
there are $T_i \in [0,S']$ so that $\phi^{-T_1} R(v_1, \delta') \cap \phi^{T_2} R(v_2, \delta')\neq \emptyset$.
To see this, let $C$ denote the constant in Lemma \ref{lemma:locprodsize}.
By the hyperbolicity of the geodesic flow, there exists $T_1 > 0$ sufficiently large such that 
$\phi^{-T_1 + D} W_{\delta'}^{ss}(\tilde v_1) \supset W_C^{ss}(\phi^{-T_1+D} \tilde v_1)$. 
In fact, by the remark after \cite[Proposition 4.1]{HIH}, for any $T > 0$ and  $\tilde v_1' \in W^{ss}(\tilde v_1)$ we have
 $d(\phi^{-T} \tilde v_1, \phi^{-T} \tilde v_1') \geq c e^{aT} d(\tilde v_1, \tilde v_1')$, where $c = c(a)$ is a constant depending only on $a$.
As such, $T_1$ can be taken smaller than $S = \frac{1}{a} \log(C'/\delta')$ for some $C' = C'(a,b,D)$.
Similarly, there exists $0 < T_2 \leq S$ such that
$\phi^{T_2-D} W_{\delta'}^{su}(v_2) \supset W_C^{su}(\phi^{T_2-D} v_2)$.

By Lemma \ref{lemma:locprodsize} applied to $u_1 = \phi^{-T_1} v_1$ 
and $u_2 = \phi^{T_2-D} v_2$, 
we have that there exists $\sigma \in [-D,D]$ such that
$\phi^{-T_1} W_{\delta'}^{ss}(v_1) \cap \phi^{T_2 + \sigma - D} W_{\delta'}^{su}( v_2) \neq \emptyset$. 
Note that $T_2' := T_2 + \sigma - D \leq T_2 \leq S$. 
By construction of the $R(v_i, \delta')$, it follows that $\phi^{-T_1} R(v_1, \delta') \cap \phi^{T_2'} R(v_2, \delta') \neq \emptyset$ as claimed. 

Now let $u \in \phi^{-T_1} R(v_1, \delta') \cap \phi^{T_2'} R(v_2, \delta')$. 
This means $d(\phi^{T_1} u, v_1), d(\phi^{-T_2'} u, v_2) < \delta'$, which by the shadowing property for Anosov flows \cite[Theorem 18.1.6]{hasskatok} in turn implies that
\[ \{ \phi^t v \}_{t \in [-T_0, T_0]}  \cup \{ \phi^t (-u) \}_{t \in [-T_2', T_1]} \]
 is $\eta = \eta(\delta',g)$-shadowed 
 by a periodic orbit $\{ \phi^t w \}_{t \in [0, S_0]}$ for some $S_0$ satisfying $|2(T_0 + S) - S_0| < \eta$. 
An argument similar to \cite[Lemma 3.13]{butt22finite}, shows that for this particular flow, we can take $\eta = \eta(\delta',g) = C' \delta'$ for some constant $C'$ depending only on $a, b$.
Finally, choosing $\delta'$ small enough so that $C \delta' < \delta$ completes the proof. 
\end{proof}

Given $\xi, \xi', \eta, \eta' \in \partial^4 \tilde M$, 
let $q_1, q_2 \in \tilde M$ be the unique points on the geodesics through $[\xi,\eta']$ and $[\xi',\eta]$, respectively, 
such that the geodesic segment $[q_1,q_2]$ is perpendicular to both $[\xi,\eta']$ and $[\xi',\eta]$.
Let $v_1$ be the unit tangent vector based at $q_1$ such that $\pi(v_1) = \xi$ and $\pi(-v_1) = \eta$.
Let $v_2$ be the unit tangent vector based at $q_2$ such that $\pi(v_2) = \xi'$ and $\pi(-v_2) = \eta'$. 
We then have the following:

\begin{lem}\label{djhor}
Let $v_1, v_2$ as above.
Then there exists small enough $r = r(a, b)$ so that the following holds: 
Fix $\delta \in (0,r)$. Then for any $(\xi, \xi', \eta, \eta') \in \partial^4 \tilde M$ such that $\delta \leq [\xi, \xi', \eta, \eta']$, and such that the distances between the geodesics $[\xi,\eta'], [\xi',\eta], [\xi,\eta], [\xi',\eta']$ are all less than $r$, there exists $T \leq \frac{2}{a} \log (16 (a \delta)^{-1})$ such that the stable horospheres associated to $W^{ss}(\phi^T v_1)$ and $W^{ss}(\phi^T v_2)$ are disjoint. 
%(It is clear that a sufficiently large $T$ guarantees disjointness, we seek to precisely estimate the size of $T$ given $\delta$.)
\end{lem}

For the proof, we will use a formula for the cross-ratio in terms of the \emph{Gromov product}, following Bourdon \cite{bourdon}.
Recall that for $\xi, \xi' \in \partial \tilde M$ and $p \in \tilde M$, the Gromov product of $\xi$ and $\xi'$ with respect to $p$ is given by
\begin{equation}\label{eq:Gp}
(\xi \, | \, \xi')_p := \lim_{(x, x') \to (\xi, \xi')} \frac{1}{2} (d(p, x) + d(p, x') - d(x, x')).
\end{equation}
This quantity is also equal to half the length of the portion of the bi-infinite geodesic determined by $\xi$ and $\xi'$ which lies in the intersection of the horoballs bounded by the zero sets of the Busemann functions $B_{\xi, p}$ and $B_{\xi', p}$ (see \cite[2.4]{bourdon2}).

For $\xi, \xi' \in \partial \tilde M$, let  $d_p(\xi, \xi') = e^{-(\xi \, | \, \xi')_p}$ be the Bourdon--Gromov distance. 
%By \cite[1.1 a]{bourdon}, this $d_p$ is a metric on $\partial \tilde M$ for every $p$.  
%
By \cite[1.1 d)]{bourdon}, the cross-ratio can be written as 
\begin{equation}\label{eq:crGromov}
e^{\frac{[\xi, \xi', \eta, \eta']}{2}} = \frac{d_p(\xi,\eta) \, d_p(\xi',\eta')}{d_p(\xi,\eta') \, d_p(\xi',\eta)}
\end{equation}
for any $p \in \tilde M$. 

We will also use a characterization of $d_p(\xi, \xi')$ in terms of comparison angles. 
Recall that, by hypothesis, the sectional curvatures of $(M, g)$ are bounded above by $-a^2$. Consider now 
the rescaling of $g$ which has sectional curvatures bounded above by $-1$; the associated distance function is then written as $a \, d_g$ with respect to the original distance function $d_g$. (This normalization is so that we are working with a CAT$(-1)$ space; see the introduction of \cite{bourdon} for the definition.) 
Now given $p, x, x' \in \tilde M$, consider the associated comparison triangle in the hyperbolic space of constant curvature $-1$, and let $\theta_p(x,x')$ denote the associated comparison angle. Then, for $\xi, \xi' \in \partial \tilde M$, let 
\[
\theta_p(\xi, \xi') := \lim_{(x,x') \to (\xi, \xi')} \theta_p(x, x').
\]
By \cite[2.5]{bourdon2}, we then have
\begin{equation}\label{eq:GPangle}
d_p(\xi, \xi')^{a} = \sin \left( \frac{\theta_p(\xi, \xi')}{2} \right). 
\end{equation}
We will also use the key fact that $\theta_p(\cdot, \cdot)$ is a metric on $\partial \tilde M$ for all $p \in \tilde M$. (This is where the CAT$(-1)$ hypothesis, and hence the above preliminary rescaling of the metric, is used; see \cite[Proposition 1.2 b)]{bourdon} and \cite[Lemma 2.5.4]{bourdon2}.)

Finally, for the proof of Lemma \ref{djhor}, we also require the following.

\begin{lem}\label{lem:orth-proj-hor}
Suppose $(M, g)$ has sectional curvatures in the interval $[-b^2, -a^2]$. 
Let $u_0 \in T_{p_0}^1 \tilde M$ and let $B: \tilde M \to \R$ denote the associated Busemann function.
Given $\delta > 0$, there exists small enough $r = r(\delta, a, b)$ so that the following holds:
%There exists small enough $r_0 = r_0(b)$ so that for any $\delta > 0$, there exists $r = r(\delta, a, b) \leq r_0$ so that the following holds: 
For any $p$ on the horosphere $\{ B = 0 \}$ let $p'$ denote its orthogonal projection onto the hypersurface $\exp_p({\rm grad} B^{\perp})$. Then $d(p_0, p') \leq r$ ensures $d(p_0, p) \leq \delta$.  
\end{lem}

\begin{proof}
Let $s_0 > 0$.
For any $v \in T_{p_0}^1 \tilde M$ which is orthogonal to $u_0 = {\rm grad} B(p_0)$, we define a number $r(s_0, v)$ as follows.
Let $\gamma(s)$ denote the unit speed geodesic such that $\gamma(0) = p_0$ and $\gamma'(0) = v$. 
%the points $p_0$ and $p'$ in the statement of the lemma.
Let $x$ denote the point on $\{ B = 0 \}$ obtained by flowing the point $\gamma(s_0)$ along the vector field ${-\rm grad} B$. 
Let $x'$ denote the orthogonal projection of $x$ onto $\exp_{p_0}{\rm grad}B^{\perp}$. 
Define $r(s_0, v) = d(p_0, x')$.

Now suppose $s_0 \leq \frac{1}{2b}$. 
We claim that $r(s_0, v) \geq \frac{3}{4} s_0$ for all $v \in T_{p_0}^1 \tilde M \cap u^{\perp}$. 
Indeed, we have $r(s_0, v) = d(p_0, x') \geq s_0 - d(x', \gamma(s_0)) \geq s_0 - d(x, \gamma(s_0))$. 
%where in the last inequality we used that orthogonal projection decreases distances in negative curvature. 
By \cite[Lemma 3.4]{butt22finite}, we have $d(x, \gamma(s_0)) \leq \frac{b}{2} s_0^2$.  
This gives $r(s_0, v) \geq s_0 ( 1 - \frac{b}{2} s_0) \geq \frac{3}{4} s_0$ as desired. 

Now let $p'$ be a point in the hypersurface $\exp_{p_0} ({\rm grad}B^{\perp})$ such that $d(p_0, p_1) \leq r \leq r_0 := \frac{1}{2} \left( \frac{1}{2b} \right)$. 
Let $\nu$ be the unit normal vector of this hypersurface at $p'$, and let $p$ be the point on $\{ B = 0 \}$ obtained by intersecting $\{ B = 0 \}$ with the geodesic $\eta(t)$ determined by $\eta(0) = p'$ and $\eta'(0) = p$. 
Then $p$ and $p'$ are as in the statement of the lemma.
By the previous paragraph, our choice of $r_0$ guarantees that the geodesic determined by the points $p \in \tilde M$ and $\pi(u_0) \in \partial \tilde M$ does in fact intersect $\exp_{p_0}({\rm grad}B^{\perp})$ at some point $y$, and moreover, we have $s_0 := d(y, p_0) \leq \frac{4}{3} r$. 
By \cite[Proposition 4.7]{HIH}, we then have there is a constant $C = C(a)$ so that $d(p_0, p) \leq C(a) s_0 \leq \frac{4}{3} C(a) r$. 
This completes the proof.
\end{proof}

\begin{proof}[Proof of Lemma \ref{djhor}]

If the horospheres associated to $W^{ss}(v_1)$ and $W^{ss}(v_2)$ are disjoint, then taking $T = 0$ proves the claim; 
if not, let $q$ be a point lying on both horospheres, which minimizes the distance to the segment $[q_1, q_2]$. 

Recall from above that the Gromov product $(\xi \, | \, \xi')_q$ is equal to half of the length of the segment of the geodesic $[\xi, \xi']$ which lies in the intersection of the horoballs determined by $W^{ss}(v_1)$ and $W^{ss}(v_2)$. 
%\cite[1.1]{bourdon}.
This means precisely that flowing $v_1$ and $v_2$ for time $T = 2 (\xi \, | \, \xi')_q$ guarantees disjointness of the desired horospheres; 
thus, we need to bound $(\xi \, | \, \xi')_q$ from above in terms of $\delta$.

Since $[\xi, \xi', \eta, \eta'] \geq \delta$, (\ref{eq:crGromov}) gives
\begin{align*}
1 + \delta/2 \leq e^{\frac{[\xi, \xi', \eta, \eta']}{2}} = \frac{d_q(\xi,\eta) \, d_q(\xi',\eta')}{d_q(\xi,\eta') \, d_q(\xi',\eta)}.
\end{align*}
Hence one of $\frac{d_q(\xi',\eta)}{d_q(\xi,\eta)}$ or $\frac{d_q(\xi,\eta')}{d_q(\xi',\eta')}$ is bounded above by $(1 + \delta/2)^{-1/2} \leq 1 - \delta/4$.
%if x,y < (1 + \delta/3) their product is less than 1 + delta. so x^{-1} or y^{-1} is larger than (1 + \delta/3)^{-1}
We will assume $\frac{d_q(\xi',\eta)}{d_q(\xi,\eta)} \leq 1 - \delta/4$; 
the other case is similar. 
This means 
\begin{equation}\label{eq:sin}
{\sin(\theta_q(\xi',\eta)/2)} \leq (1 - \delta/4)^a \, {\sin (\theta_q(\xi,\eta)/2)}.
\end{equation}

Next, we claim there is a small enough choice of $r = r(a,b)$ so that $\sin(\theta_q(\xi',\eta)/2) = d_q(\xi', \eta)^a \geq 1/2$.
Since $q_2$ is on the geodesic determined by $\xi'$ and $\eta$, we have $d_{q_2}(\xi',\eta)=1$. 
By \cite[1.1 b)]{bourdon},
we then have 
$d_q(\xi',\eta) = \exp(\frac{1}{2} B_{\xi'}(q_2, q) + \frac{1}{2} B_{\eta}(q_2, q)) \geq \exp(- d(q, q_2))$.
Now let $\delta_0$ sufficiently small so that $d(q, q_2) < \delta_0$ implies $\exp(-d(q, q_2)) \geq 1/2$. 
Applying Lemma \ref{lem:orth-proj-hor} with $p_0 = q_2$ and $q = p$ shows that $d(q_2, q)$ can be made less than $\delta_0$ provided the distance between $q_2$ and the orthogonal projection of $q$ onto $[q_1, q_2]$, which is bounded above by $d(q_1, q_2) \leq r$, is sufficiently small in terms of $\delta_0$, $a$, and $b$.

Now let $x = \theta_q(\xi',\eta)/2$ and $y = \theta_q(\xi,\eta)/2$. The previous paragraph, together with (\ref{eq:sin}), shows $\sin (y) \geq 1/2$. Setting $4 \eps = 1 - (1 - \delta/4)^a$ and using (\ref{eq:sin}), we have
\begin{align*}
\sin x &\leq (1 - \delta/4)^a \sin y \\
&= (1 - 4 \eps) \sin y \\
&\leq (1 - \eps^2) \sin y - 2 \eps \sin y \\
&\leq \cos \eps \sin y  - \eps \tag{since $\sin y \geq 1/2$} \\
&\leq \cos \eps  \sin y  - \sin \eps  \cos y  \\
&=\sin(y - \eps).
\end{align*}
Since $x, y \in [0 ,\pi/2]$ and $\theta \mapsto \sin \theta$ is increasing on this interval, we conclude $\eps \leq y - x$. 
By the triangle inequality for $\theta_q$, we have
\[
\frac{\theta_q(\xi, \xi')}{2} \geq \frac{\theta_q(\xi,\eta) - \theta_q(\xi',\eta)}{2} = y - x \geq \eps \geq a \delta /8. 
\]
As such, 
\[
e^{- a (\xi \, | \, \xi')_q} = d_q(\xi,\xi')^a = \sin( \theta_q(\xi,\xi')/2) \geq \theta_q(\xi,\xi')/4 \geq  a \delta/16.
\]
This gives $(\xi \, | \, \xi')_p \leq \frac{1}{a} \log (16 (a \delta)^{-1})$. 
\end{proof}

\begin{proof}[Proof of Proposition \ref{proposition:crMLSshort}]

If $[\xi, \xi', \eta, \eta'] < \delta$, taking all $\gamma_i$ to be the trivial free homotopy class shows the statement of the proposition. 
As such, we will now assume that $\delta \leq [\xi, \xi', \eta, \eta']$.
%As such, we will now assume that $\delta \leq [\xi, \xi', \eta, \eta'] \leq R$, where $R$ is chosen as in the proof of Lemma \ref{djhor}.
By Lemma \ref{djhor}, there exist $T^+, T^- \leq \frac{1}{a} \log(C/\delta)$ (for some universal constant $C$) such that
the stable horospheres associated to $W^{ss}(\phi^{T^+} v_1)$ and $W^{ss}(\phi^{T^+} v_2)$ are disjoint, and 
the unstable horospheres associated to $W^{su}(\phi^{T^-} v_1)$ and $W^{su}(\phi^{T^-} v_2)$ are disjoint. Denote these horospheres by $H_{\xi}, H_{\xi'}, H_{\eta}, H_{\eta'}$, respectively.

Now fix $\delta' > 0$ (which will later be specified in terms of $\delta$).
We can increase $T^{+}$ more if necessary so that the intersection points of the geodesics $[\xi,\eta]$ and $[\xi,\eta']$ with $H_{\xi}$ are of distance less than $\delta'/8$ apart; 
similarly, we further increase $T^+$ so that the analogous property holds for the horosphere $H_{\xi'}$. We also increase $T^-$ so that the analogous property holds for $H_{\eta}$ and $H_{\eta'}$. By the remark after Proposition 4.1 in \cite{HIH}, we can do all this while keeping $T^{\pm}$ bounded above by $\frac{1}{a} \log(c(a) d(q_1, q_2)/ \delta')$, for some constant $c = c(a)$ depending only on $a$.

For simplicity we will first complete the proof in the case where all the above four horospheres 
$H_{\xi}, H_{\xi'}, H_{\eta}, H_{\eta'}$ are pairwise disjoint.
Let $l_{\xi, \eta'}$ denote the length of the segment of the geodesic determined by $\xi$ and $\eta'$ which lies in the complement of the above four horospheres.
Define $l_{\xi', \eta}, l_{\xi', \eta'}, l_{\xi, \eta}$ analogously.
In this case, the proof of \cite[Lemma 2.1]{otalsymplectic} shows that 
\begin{equation}\label{crhor}
[\xi, \xi', \eta, \eta'] = l_{\xi, \eta} + l_{\xi', \eta'} - l_{\xi, \eta'} - l_{\xi', \eta}.
\end{equation}
%
%Now fix $\delta' > 0$ (which we will soon specify in terms of $\delta$). 
Using Lemma \ref{lemma:spec-const}, there is a constant $C = C(a,b,{\rm diam}(M))$ such that we can increase $T^{\pm}$ by at most $\frac{1}{a} \log(C / \delta')$ so that the lengths $l_{\xi,\eta'}$ and $l_{\xi',\eta}$ are each within $\delta'$ of lengths of closed geodesics.
By the Anosov closing lemma (see \cite[Lemma 3.13]{butt22finite}), together with the previous paragraph, we have that $l_{\xi, \eta}$ and $l_{\xi', \eta'}$ are within $C \delta'$ of lengths of closed geodesics, for some $C = C(a, b)$. 
Choosing $\delta'$ so that $2 \delta' + 2 C \delta' < \delta/2$ completes the proof in this case.

Now suppose instead that the above four horospheres are not disjoint. Without loss of generality, assume $H_{\xi}$ intersects another one of $H_{\xi'}, H_{\eta}, H_{\eta'}$ nontrivially. 
By Lemma \ref{djhor}, $H_{\xi}$ does not overlap with $H_{\xi'}$. If $H_{\xi}$ intersects $H_{\eta}$ or $H_{\eta'}$, then the overlap between the two horoballs contains a segment of the geodesic $[\xi,\eta]$ or $[\xi,\eta']$, respectively. 
In this case, the proof of \cite[Lemma 2.1]{otalsymplectic} then shows that the formula (\ref{crhor}) holds after changing the sign in front of $l_{\xi, \eta}$ or $l_{\xi, \eta'}$, respectively. So the same argument as in the previous paragraph completes the proof.
\end{proof}

The next step of our argument is to obtain an analogue of (\ref{equation:crvsl}) for the marked length spectrum  $\mathcal{L}_{g_0} \circ f_*$.
To do so, we use an orbit equivalence of geodesic flows, as in \eqref{eq:oeq}. 
%{\color{purple}
In Theorem \ref{thm:oe} (\ref{thm:hold-exp}) in Section \ref{almostconj} below, we show that the orbit equivalence constructed in \cite{gromov3rmks} is H{\"o}lder continuous with controlled H{\"o}lder constant and exponent. That is, for all $v, w \in T^1 M$, we have
\begin{equation}\label{eq:thm-oe}
d(\mathcal{F}(v), \mathcal{F}(w)) \leq C d(v, w)^{\alpha},
\end{equation}
where $C$ is a constant depending only on $n, \Gamma, a, b, i_0, D$ and $\alpha$ is any positive number strictly less than $(1 - \eps) a/b$. 
%}
%In \cite[Proposotion 2.4]{butt22finite}, using ideas of Gromov \cite{gromov3rmks}, we showed the following:

%\begin{prop}\label{prop:oeholder}
%Suppose $(M, g)$ and $(N, g_0)$ have sectional curvatures in the interval $[-b^2, -a^2]$.
% Let $i_M$ and $i_N$ denote their respective injectivity radii.
%Suppose there is $A \geq 1$ such that $f: (M, g) \to (N, g_0)$ and $h: (N, g_0) \to (M, g)$ are $A$-Lipschitz homotopy equivalences, with $f \circ h$ homotopic to the identity.

%Then there exists an orbit equivalence of geodesic flows $\mathcal{F}: T^1 M \to T^1 N$ which is $C^1$ along orbits and transversally H{\"o}lder continuous. 
%More precisely, there is a small enough $\delta_0 = \delta_0(a,b)$, together with a constant $C = C(a, b, {\rm diam}(M), {\rm diam}(N), A)$,
%curvature, diameters, A.
%so that 
%\begin{enumerate}
%\item $d (\mathcal{F}(v), \mathcal{F}(\phi^t v)) \leq At$ for all $v \in T^1 \tilde M$ and all $t \in \R$,
%\item $d(\mathcal{F}(v), \mathcal{F}(w)) \leq C d(v,w)^{A^{-1} a/b}$ for all $v, w \in T^1 \tilde M$ with $d(v, w) < \delta_0$.
%\end{enumerate}
%\end{prop}

\begin{prop}\label{prop:crMLSoe}
Let $\delta > 0$.
Suppose that $(\xi, \xi', \eta, \eta') \in \partial^4 \tilde M$ and $\gamma_i$, $i = 1,2,3,4$, are such that
\[
|[\xi, \xi', \eta, \eta'] - (\mathcal{L}_g(\gamma_1) + \mathcal{L}_g (\gamma_2) - \mathcal{L}_g(\gamma_3) - \mathcal{L}_g(\gamma_4) )| < \delta.
\]
Let $\delta' = 100 \, C \delta^{\alpha}$, 
%{\color{purple}
where $C$ and $\alpha$ are as in \eqref{eq:thm-oe}. 
%}
Then
\[ | [f(\xi),f(\xi'),f(\eta),f(\eta')] - (\mathcal{L}_g(f\gamma_1) + \mathcal{L}_g (f\gamma_2) - \mathcal{L}_g(f\gamma_3) - \mathcal{L}_g(f\gamma_4) )| < \delta'. \]
\end{prop}

\begin{proof}
Let $H_{\xi}, H_{\xi'}, H_{\eta}, H_{\eta'}$ as in the proof of Proposition \ref{proposition:crMLSshort}.
For simplicity, we will complete the proof in the case where $H_{\xi}, H_{\xi'}, H_{\eta}, H_{\eta'}$ are all disjoint (if not, the sign-change argument at the end of the proof of Proposition  \ref{proposition:crMLSshort} can be used similarly here).
For $i = 1, \dots, 4$, let $v_i \in T^1 \tilde M$ be such that the terms on the right hand side of (\ref{crhor}) are of the form 
$\{ \phi^t v_i \}_{t \in [-T_i^-, T_i^+]}$ for some times $T_i^{\pm} > 0$. 
Recall that, by construction, $d(\phi^{T_1^+} v_1, \phi^{T_3^+} v_3) < \delta/8$.
As in the proof of Proposition \ref{proposition:crMLSshort}, we can increase the $T_i^{\pm}$ if necessary 
so that there are unit tangent vectors $w_i$ satisfying $d(\phi^t v_i, \phi^t w_i) < \delta$ for all $t \in [-T_i^-, T_i^+]$, 
and such that the segments $\{ \phi^t w_i \}_{t \in [-T_i^-, T_i^+]}$ project to periodic orbits in $T^1 M$. 

Now let $l_{f(\xi), f(\eta)} = d(\mathcal{F} (\phi^{T_1^+} v_1), \mathcal{F} (\phi^{-T_1^-} v_1) )$ and define $l_{f(\xi), f(\eta')}, l_{f(\xi'), f(\eta)}, l_{f(\xi), f(\eta')}$ analogously.
The previous paragraph, combined with 
%{\color{purple} 
\eqref{eq:thm-oe},
%}, 
gives 
\[
|[\overline f(\xi), \overline f(\xi'), \overline f(\eta), \overline f(\eta')] - (l_{f(\xi), f(\eta)} + l_{f(\xi'), f(\eta')} - l_{f(\xi'), f(\eta)} - l_{f(\xi), f(\eta')})| < 8 \delta',
\]
where $\delta' = \overline{C}( \delta/8)^{\alpha}$. 
Next, note that
$d(\mathcal{F}(\phi^t v_i), \mathcal{F}(\phi^t w_i)) < \delta'$ for all $t \in [-T_i, T_i]$.
This means 
\[
|d(\mathcal{F}(\phi^{T_i} v_i), \mathcal{F}(\phi^{-T_i} v_i)) - d(\mathcal{F}(\phi^{T_i} w_i), \mathcal{F}(\phi^{-T_i} w_i))| < 2 \delta'
\]
for each $i$.
Finally, note that if $\gamma_i$ denotes the free homotopy class associated to the periodic orbit determined by $w_i$, then $2 T_i = \mathcal{L}_g(\gamma_i)$, and moreover, 
$\mathcal{F}(w_i)$ is tangent to a periodic geodesic of length $\mathcal{L}_{g_0} (f(\gamma_i))$. 
This shows $d(\mathcal{F}(\phi^{T_i} w_i), \mathcal{F}(\phi^{-T_i} w_i)) = \mathcal{L}_{g_0} (f(\gamma_i))$, which completes the proof.
\end{proof}

\begin{proof}[Proof of Propostion \ref{crmain}]
Given $\eps > 0$, set $L = \eps^{-1/2}$. Let $C$ be the constant from Proposition \ref{proposition:crMLSshort}. 
Set $\delta = C e^{-a \eps^{-1/2}}$. Then $L$ and $\delta$ are related as in the statement of Proposition \ref{proposition:crMLSshort}. 
Set $\delta_0 = 3 \delta^{\alpha/2}$, where $\alpha$ 
%{\color{purple} 
as in \eqref{eq:thm-oe}.
%}
 Take $(\xi, \xi', \eta, \eta') \in \partial^4 \tilde M$ satisfying the hypothesis of Proposition \ref{proposition:crMLSshort}, and suppose further that $[ \xi, \xi', \eta, \eta'] \geq \delta_0$.
Now set
\begin{align*}
x &= [\xi,\xi',\eta,\eta'], \\
y &= {\mathcal{L}_g(\gamma_1) + \mathcal{L}_g(\gamma_2) - \mathcal{L}_g(\gamma_3) - \mathcal{L}_g(\gamma_4)}, \\
x' &= [f(\xi), f(\xi'), f(\eta), f(\eta')], \\
y' &= \mathcal{L}_{g_0}(f (\gamma_1)) + \mathcal{L}_{g_0}(f(\gamma_2)) - \mathcal{L}_{g_0}(f(\gamma_3)) - \mathcal{L}_{g_0}(f(\gamma_4)).
\end{align*}
Note we are assuming $x \geq 3 \delta^{\alpha/2}$. We also have
\begin{enumerate}
\item[1)] $|x - y| < \delta$ by Proposition \ref{proposition:crMLSshort};
\item[2)] $|x' - y'| < \delta' = C \delta^{\alpha}$ by Proposition \ref{prop:crMLSoe};
\item[3)] $1 - 4 \sqrt{\eps} \leq \frac{y'}{y} \leq 1 + 4 \sqrt{\eps}$ by Lemma \ref{observation}.
\end{enumerate}

We want to show the ratio $x'/x$ is close to 1. By 1) and 2), we have
\[ \frac{y' - \delta'}{y + \delta} \leq \frac{x'}{x} \leq \frac{y' + \delta'}{y - \delta}, \]
which rearranges to
\begin{equation*} 
-\frac{1}{y + \delta}( \delta' + \delta y'/y) \leq
\frac{x'}{x} - \frac{y'}{y} \leq 
\frac{1}{y - \delta} (\delta' + \delta y'/y).
%\frac{\delta'}{y - \delta} + \frac{y'}{y} \frac{\delta}{y - \delta}. 
\end{equation*}
Since $x \geq 3 \delta^{\alpha/2}$, we have $y \geq 2 \delta^{\alpha/2}$. For $\delta < 1$, we then get $y - \delta \geq \delta^{\alpha/2}$. Hence $(y \pm \delta)^{-1} \leq \delta^{- \alpha/2}$, which gives
\[
|x'/x - y'/y| \leq \delta^{- \alpha/2}(C \delta ^{\alpha} + 2 \delta) \leq C' \delta^{\alpha/2}.
\]
Using 3), we see that $1 - \eps' \leq x'/x \leq 1 + \eps'$ for $\eps' = C \delta_0 + 4\sqrt{\eps}$. 
\end{proof}

\subsection{The Liouville current}\label{sec:LC}
In this section, we use our findings on the cross-ratio from Proposition \ref{crmain} to relate the Liouville currents $\lambda^M$ and $\lambda^N$. 
We first explain the connection between the cross-ratio and the temporal function. (The temporal function was also used in the proof of Lemma \ref{lemma:locprodsize}). 

\begin{defn}\label{def:bbtf} (See, for instance, \cite[Proposition 6.2.2]{fisherhass}.)
Given $w_1$ and $w_2$ such that $\pi(-w_1) \neq \pi(w_2)$, there is a unique time $\sigma = \sigma (w_1,w_2)$, which we call the \emph{temporal function}, such that 
\[ W^{su}(w_1) \cap W^{ss}(\phi^{\sigma} w_2) \neq \emptyset; \]
moreover, the above intersection consists of a single point 
denoted by $[w_1, w_2]$, and called the \emph{Bowen bracket} of $w_1$ and $w_2$. 
\end{defn}

We then have (see, for instance, \cite[p. 495]{hamreg})
\begin{equation}\label{eq:cr-temp1}
[ \pi(w_1), \pi(w_2), \pi(-w_1), \pi(-w_2)] = \sigma(w_1, w_2) + \sigma(w_2,w_1).
\end{equation}

Now, we relate the temporal function to $d \omega$ as in \cite[Lemma 1]{hamconj}; see also \cite[Appendix B]{liverani}.  
Let $v \in T^1 M$ and suppose $w_1 \in W^{su}(v)$ and $w_2 \in W^{ss}(v)$. 
Consider a $C^1$ surface $\Sigma$ bounded by five arcs (each contained in either a stable leaf, an unstable leaf or a flow line) connecting the points $v, w_1, [w_1, w_2], w_2, \phi^{\sigma} w_2$.
By Stokes' theorem, together with the fact that the stable and unstable bundles are in the kernel of $\omega$, we obtain
\begin{equation}\label{eq:cr-temp}
\int_{\Sigma} d \omega = \int_{\partial \Sigma} \omega = \sigma(w_1, w_2) = [ \pi(w_1), \pi(w_2), \pi(-w_1), \pi(-w_2)], 
\end{equation}
where in the last equality, we used that $\sigma(w_2, w_1) = 0$, since $v = W^{ss}(w_2) \cap W^{su}(w_1)$. 

\begin{rem}
In the special case where the Bowen bracket is $C^1$ (ie if the Anosov splitting is $C^1$) then we can construct $\Sigma$ as in the proof of \cite[Lemma 1]{hamconj} as follows: let $s \mapsto c_1(s)$ be a curve in $W^{su}(v)$ joining $v$ and $w_1$, 
and let $t \mapsto c_2(t)$ be a curve in $W^{ss}(v)$ joining $v$ and $w_2$; then we can take $\Sigma$ to be the image of $(s, t) \mapsto [c_1(s), c_2(t)]$.
\end{rem}

If $\mathcal{F}$ conjugates the geodesic flows of $M$ and $N$, then 
$\sigma(w_1, w_2) = \sigma(\mathcal{F}(w_1), \mathcal{F}(w_2))$, since $\mathcal{F}$ preserves the strong stable and unstable foliations. 
If, moreover, $\mathcal{F}$ is $C^1$ (which is the case if, eg, the Anosov splittings of $M$ and $N$ are both $C^1$ \cite{hamconj}), and $\Sigma$ is the surface in the above remark, then $\mathcal{F}(\Sigma)$ is a $C^1$ surface of the same form.
This shows $\mathcal{F}$ preserves the symplectic form $d \omega$ and hence the Liouville volume form. 

In general, the marked length spectrum allows us to relate $\sigma(w_1, w_2)$ and $\sigma(\mathcal{F}(w_1), \mathcal{F}(w_2))$ by way of the cross-ratio (Proposition \ref{crmain} and \eqref{eq:cr-temp1}). 
However, it is less clear how $\int_\Sigma d \omega_M$ and $\int_{\mathcal{F}(\Sigma)} d \omega_N$ are related when $\mathcal{F}$ is neither $C^1$ nor time-preserving.  
In \cite[Corollary 3.12]{ham99symplectic}, Hamenst{\"a}dt estimates the Liouville current of embedded balls in $\partial^2 \tilde N$ which satisfy a \emph{quasi-symplecticity} property (see \cite[p.123]{ham99symplectic}), which essentially means that equality approximately holds in (\ref{eq:cr-temp}). (The argument relies on the Bowen bracket of $N$ being $C^1$.)

In \cite[p. 123]{ham99symplectic}, Hamenst{\"a}dt constructs particular examples of quasi-symplectic balls in $\partial^2 \tilde M$, whose Liouville current can be independently estimated without relying on the Bowen bracket. 
Equality of marked length spectra, and hence cross-ratios, means that the images of these balls by $\overline{f}$ have controlled Liouville current. 
All the above mentioned estimates/approximations go to zero as the size of the balls goes to zero. 
Finally, Hamenst{\"a}dt approximates $T^1 M$ and $T^1 N$ with arbitrary small such balls from above and below to estimate their total Liouville measures (see the measures $\mathcal{S}$ and $\mathcal{P}$ defined on pages 124, and 131 of \cite{ham99symplectic}, respectively).

In the $\eps > 0$ case, we follow the same overall approach, but, in light of Proposition \ref{crmain}, we can only compare balls of size bounded away from zero. 
Thus, we need to make the various estimates and approximations referred to above more explicit.
In subsection \ref{Ev}, we study the particular examples of quasi-symplectic balls from \cite[p. 123]{ham99symplectic}.
In subsection \ref{fEv}, we study the images of these balls by the map $\overline{f}: \partial^2 \tilde M \to \partial^2 \tilde N$, and use quasi-symplecticity to estimate their Liouville current.
%We begin by 
%explaining the explicit relation between the Liouville current and the cross-ratio in the case where $\dim(M) = 2$. 
%Let $\xi,\xi',\eta,\eta' \in \partial \tilde M$ be four distinct points. 
%Since $\partial \tilde M$ is a circle, the pair of points $(\xi,\xi')$ determines an interval in the boundary (after fixing an orientation). 
%Let $(\xi,\xi') \times (\eta,\eta') \in \partial^2 \tilde M$ denote the geodesics starting in the interval $(\xi,\xi')$ and ending in the interval $(\eta,\eta')$. 
%Then
%\begin{equation}\label{crlc}
% \lambda((\xi,\xi') \times (\eta,\eta')) = \frac{1}{2} [\xi,\xi',\eta,\eta'].  
%\end{equation}
%(See \cite[Proof of Theorem 2]{otalMLS} and \cite[Theorem 4.4]{hersonskypaulin}.)
%In particular, if $\mathcal{L}_g = \mathcal{L}_{g_0} \circ f$, then $\overline{f}_* \lambda^M = \lambda^N$.  

\subsubsection{$E_v$-cubes}\label{Ev}

In this subsection, we analyze Hamenst{\"a}dt's construction of transversals to the geodesic flow on $T^1 M$ with controlled Liouville current \cite[p. 123]{ham99symplectic}.
We start with some notation and preliminaries on various subbundles of $T( T^1 \tilde M)$. (See, for instance, \cite[p. 495]{hamreg}.)

Let $P: T^1 \tilde M \to \tilde M$ denote the footpoint map. The \emph{vertical bundle} $\mathcal{V}$ is the kernel of the differential $dP: T T^1 \tilde M \to T \tilde M$. For each $v$, the elements of $\mathcal{V}(v)$ are tangent to a curve in $T_{p}^1 \tilde M$ (for $p = P(v)$); the tangent vectors of such curves are perpendicular to $v$. 
Let $\kappa: T T^1 \tilde M \to \tilde M$ denote the connector map, which is defined as follows. Given $X \in T_v T^1 \tilde M$, let $v(t)$ be a curve in $T^1 \tilde M$ which is tangent to $X$. Then $\kappa(X)$ is by definition the covariant derivative $\nabla_{(Pv)'(0)} v(0)$. 
These maps give an identification of $T_vT^1 \tilde M$ with $T_p^1 \tilde M \oplus T_p^1 \tilde M$, where $V \mapsto (dP(V), \kappa(V))$.
The \emph{Sasaki metric} on $T^1 \tilde M$ is given by 
\[ \langle V, W \rangle_v = \langle dP(V), dP(W) \rangle +\langle \kappa(V), \kappa(W) \rangle,
\]
where the inner products on the right hand side are with respect to the metric $g$.

If $X_0$ denotes the geodesic spray, ie, the vector field on $T^1 \tilde M$ generating the geodesic flow, then $\kappa(X_0) = 0$. The \emph{horizontal bundle} is defined to be the orthogonal complement of $\R X_0 \oplus \mathcal{V}$ with respect to the Sasaki metric; as such, the horizontal directions and $\R X_0$ span the kernel of $\kappa$.

If $\omega$ denotes the canonical contact form on $T^1 \tilde M$ introduced above, then its exterior derivative has the form 
\begin{equation}\label{sympformula}
d \omega( V, W) = \langle \kappa(V), dP(W) \rangle - \langle \kappa(W), dP(V) \rangle
\end{equation}
for all $V, W \in T T^1 \tilde M$ (see, for instance, \cite[Proposition 1.4 ii)]{keiththesis}). 

It will also be important for us to understand how the (strong) stable and unstable subbundles $T W^{ss}$ and $T W^{su}$ relate to the horizontal and vertical subbundles. 
For this, recall from Section \ref{sec:cr} that curves $v(t)  \in W^{ss}(v)$ are such that the footpoint curve $Pv(t)$ is a curve in the horosphere determined by $P(v) \in \tilde M$ and $\pi(v) \in \partial \tilde M$, and that $v(t)$ is the inward normal to the horosphere at the point $Pv(t)$. 
As such, for $V \in T_v W^{ss}$, let $w$ denote its horizontal component $dP(V)$ (which is tangent to $P(W^{ss}(v))$.  
To determine the vertical component $\kappa(V)$,
let $U(v)$ denote the second fundamental form of the horosphere $P W^{ss} (v)$. 
For fixed $\xi \in \partial \tilde M$, let $Z^{\xi}$ denote the vector field on $\tilde M$ so that $\pi(Z^{\xi}(q)) = \xi$ for all $q \in \tilde M$. 
Then $Z^{\xi}$ is a $C^1$ vector field, and moreover, its covariant derivative is given by
\begin{equation}\label{eq:covarZ}
\nabla_Y Z^{\xi} = - U(Z^{\xi})(Y)
\end{equation}
for all $Y \in T \tilde M$ \cite[Proposition 3.1]{HIH}.
In particular, $\kappa(V)$ is given by $- U(v) w$ for $w = dP(v)$ as above.
Similarly, if $W \in T_v W^{su}$ and $w = dP(W)$, then $\kappa(W) = U(-v) w$.

Finally, we recall that $U(v)$ depends H{\"o}lder continuously on $v$. 
Using the discussion above, this follows from the fact that the subbundles $T W^{ss}$, $T W^{su}$ are H{\"o}lder continuous. 
%add something about the exponent
More precisely, from the proof of \cite[Appendix, Proposition 4.4]{ballmann}, it follows that the modulus of continuity of both the stable and unstable subbundles depends only on the expansion and contraction constants of the flow, together with the norm of the derivative of the geodesic flow. These quantities depend on the sectional curvature bounds of $(M, g)$ and the variation of the sectional curvatures, respectively. 
%{\color{purple} 
Moreover, as mentioned in the introduction, the H{\"o}lder exponent can be controlled in terms of $a$ and $b$ alone; more precisely, the exponent is given by $\alpha_0 = \min(2a/b, 1)$ 
%}
\cite[Corollary 1.7]{hass94}. 
As such, we have the following:

\begin{lem}\label{HolRic}
Let $(M, g)$ be a Riemannian manifold whose sectional curvatures are contained in the interval $[-b^2, -a^2]$, and suppose the curvature tensor $\mathcal{R}$ satisfies $\Vert \nabla \mathcal{R} \Vert \leq R$ for some $R > 0$. 
Then there exist positive constants $C = C(a, b, R)$ and %{\color{purple} 
$\alpha_0 = \min(2a/b,1)$
%} 
such that for all $v_1, v_2 \in T^1 M$, we have
\[ \Vert U(v_1) - U(v_2) \Vert \leq  C d(v_1, v_2)^{\alpha_0}. \]
%\sout{
%where $\alpha_0$ be the stable exponent of the geodesic flow defined in the introduction. }
\end{lem}

\begin{hyp}
For the remainder of this section, we will assume the curvature tensors of $M$ and $N$ both satisfy $\Vert \nabla \mathcal{R} \Vert \leq R$.
\end{hyp}

We now begin to define \emph{$E_v$-cubes}.
Fix $v \in T^1 \tilde M$ and write $p = P(v)$. 
Let $X_1, \dots X_{n-1}$ be an orthonormal basis for the fiber of the vertical subbundle $\mathcal{V}_v \subset T_v T^1 M$. Via this basis we will identify $x = (x_1, \dots, x_{n-1}) \in \mathbb{R}^{n-1}$ with $X: = \sum_i x_i X_i \in \mathcal{V}_v$.
Write $v_i = \kappa(X_i) \subset T_p^1 \tilde M$, and note that all $v_i$ are perpendicular to $v$, and hence tangent to the stable horosphere determined by $W^{ss}(v)$. 
Let $Y_i \in T_v W^{ss}$ such that $d P(Y_i) = v_i$ and $\kappa(Y_i) = - U(v) v_i$ for each $i = 1, \dots, n-1$. 
Via this basis, we can identify $y = (y_1, \dots, y_{n-1}) \in \R^{n-1}$ with $Y := \sum_i y_i Y_i \in T_v W^{ss}$. 
We also note that 
\begin{equation}\label{dualbasis}
d \omega(X_i, Y_j) = \delta_{ij}.
\end{equation}
%which will be important for quasi-symplecticity
%

Let $g^v$ denote the restriction of the Sasaki metric to $\mathcal{V}_v$; in other words, $g^v$ is the round metric on the unit tangent sphere $T^1_p \tilde M$.
Let $\exp^v: \mathcal{V}_v \to T_{p}^1 \tilde M$ 
denote the exponential map with respect to this metric. 
Let $g^{ss}$ be the metric dual to $g^v$ via $d \omega$; in other words, $g^{ss}$ is the horospherical metric induced by $g$, obtained from identifying $W^{ss}(v)$ with the horosphere $P(W^{ss}(v))$.
Let $\exp^{ss}: T_v W^{ss} \to W^{ss}(v)$ denote the exponential map with respect to $g^{ss}$. 

\begin{defn}\label{def:Ev} (See \cite[p. 123]{ham99symplectic}.)
Let $v \in T^1 \tilde M$.
Given $(x, y) \in \R^{n-1} \oplus \R^{n-1}$, let $(X, Y)$ be the associated point in $\mathcal{V}_v \oplus T_v W^{ss}$ as above. Let $E_v(x, y)$ be the unit tangent vector whose endpoints at infinity are given by $\pi (E_v(x, y)) = \pi( \exp^v(X))$ and $\pi(-E_v(x, y)) = \pi(- \exp^{ss}(Y))$, and such that, in addition, the footpoint $P(E_v(x, y))$ lies in the stable horosphere $P(W^{ss}(v))$. 
Let $Q(r) \subset \R^{n-1}$ denote the cube $[-r, r]^{n-1}$. We call $E_v(Q(r) \times Q(r)) \subset T^1 \tilde M$ an \emph{$E_v$ $r$-cube} or simply an \emph{$E_v$-cube}.
\end{defn}

The goal of this subsection is to prove the following.

\begin{prop}\label{volEv}
Let %{\color{purple} 
$\alpha_0 = \min(2a/b, 1)$
%} 
be the stable exponent of $(M,g)$. 
There exists a constant $C > 0$, depending only on $n$,  $a$, $b$, $R$, such that
\[
1 - Cr^{\alpha_0} \leq \frac{\lambda^M(E_v(Q(r) \times Q(r))}{{\rm vol}_{\R^{n-1}} (Q(r) \times Q(r))} \leq 1 + Cr^{\alpha_0}.
\]
\end{prop}

%I could track all the $n$th powers... but it's correct to absorb them into the constant at the front

\begin{rem}
In \cite[p.123 i)]{ham99symplectic}, a similar statement is claimed (for balls instead of cubes), ie, it is stated (without proof) that the above ratio is between $1 \pm \eta(r)$ for some function $\eta(r)$ such that $\eta(r) \to 0$ as $r \to 0$. 
We prove a refined version of this statement: in addition to proving the above ratio is close to 1,
we give a more precise description of the function $\eta(r)$, which is crucial for our purposes.
 \end{rem}

To prove Proposition \ref{volEv}, we first construct smooth coordinates on the space of geodesics $\partial^2 \tilde M$. 
%in order to better understand the volume form $d \omega^{n-1}$. 
Let $H$ denote the horosphere $P (W^{ss}(v))$.
Fix $r > 0$ small and let $\mathcal{H}_{r} \subset \partial^2 \tilde M$ be the set of geodesics $\gamma(t)$
intersecting $H$ such that
\begin{enumerate}
\item the point of intersection $\gamma(t_0) \in H$ satisfies $d_{ss}(\gamma(t_0), p) \leq r$,
\item the angle $\gamma'(t_0)$ makes with the normal vector $\nu(\gamma(t_0))$ to $H$ at the point $\gamma(t_0)$ is at most $r$.
\end{enumerate}

%We now introduce smooth coordinates on the set $\mathcal{H}_{r}$.
Given $(s_1, \dots, s_{n-1}) \in B(r)$, the image of the map $(s_1, \dots, s_{n-1}) \mapsto P\left( \exp^{ss}(\sum_i s_i Y_i) \right)$ consists of points on $H$ whose horospherical distance to the point $p := P(v)$ is at most $r$. These coordinates give rise to vector fields $\partial/\partial s_i$ tangent to $H$ in a horospherical neighborhood of $p$.
Now given $\theta \in [0, 2 \pi]$, together with a unit vector $w$ 
such that $Pw \in H$, 
let $R_i(\theta) w$ denote the rotation of $w$ by the angle $\theta$ in the 2-dimensional subspace of $T_{Pw}^1 \tilde M$ spanned by the normal vector $\nu$ and the tangent vector $\partial/\partial s_i$.
Finally, let 
\begin{align*}
\Phi: B(r) \times B(r) &\to T^1 \tilde M \\
(\theta_1, \dots, \theta_{n-1}, s_1, \dots, s_{n-1}) &\mapsto R_1(\theta_1) \dots R_{n-1} (\theta_{n-1})  \exp^{ss} \left( \sum_i s_i Y_i \right).
\end{align*}
%this map $\Phi$ gives the desired coordinates on $\mathcal{H}_r$. 

We now determine how the volume form $d \omega^{n-1}$ looks in these coordinates.

\begin{lem}\label{lem:stdsymp}
Write $\theta = (\theta_1, \dots, \theta_{n-1})$ and $s = (s_1, \dots, s_{n-1})$ for short. 
Then \[ 
\Phi^* (d \omega^{n-1}) 
= f(\theta, s) \,  d \theta_1 \wedge \cdots \wedge d \theta_{n-1} 
\wedge d s_1 \wedge \cdots \wedge d s_{n-1},
\]
where 
\[ 1 - C \theta \leq \frac{f(\theta, s)}{n-1}  \leq 1 + C \theta, \]
%where $f(0, 0) = 1$ and $f(s, \theta) \in [1 \pm \eta(r)]$ with $\eta(r) \to 0$
and $C$ is a constant depending only on the sectional curvature bounds $a$ and $b$.
\end{lem}

\begin{proof}
Write
\[ \Phi^* d \omega = \sum_{i \neq  j} a_{ij} \, d \theta_i \wedge d \theta_j + \sum_{i,j} b_{ij} \, d s_i  \wedge d \theta_j + \sum_{i \neq j} c_{ij} \, ds_i \wedge ds_j.  \]
It follows from (\ref{sympformula}) that $a_{ij} = 0$ for all $i, j$.
Next, 
\[
b_{ij} = d \omega ( \partial / \partial s_i, \partial / \partial \theta_j) = \langle \kappa(\partial / \partial \theta_i), dP(\partial/\partial s_j) = \delta_{ij} \cos \theta_i.
\]

Finally,
\[ c_{ij} = d \omega (\partial / \partial s_i, \partial / \partial s_j).
%\langle \kappa(\partial / \partial t_i), d P(\partial / \partial \theta_j) \rangle - \langle \kappa(\partial / \partial t_i), d P(\partial / \partial \theta_j) \rangle. 
\]
When $\theta = 0$, we have $\partial / \partial s_i$ is tangent to a curve of normal vectors to $H$; in other words, $\partial / \partial s_i \in TW^{ss}$. Since $TW^{ss}$ is a Lagrangian subbundle for $d \omega$, it follows that $c_{ij} = 0$ when $\theta = 0$. This shows that $f(0,0) = 1$.

If $\theta \neq 0$, we can write $\partial /\partial s_i = \cos \Vert \theta \Vert V + \sin \Vert \theta \Vert W_i$,
where $V$ is tangent to the normal vector field along the curve $s \mapsto P(\exp^{ss}(s Y_i))$, and $W_i$ is tangent to a vector field along the same horospherical geodesic which remains tangent to the given horosphere.
We then have
\begin{equation}\label{cij}
c_{ij} = \cos \Vert \theta \Vert  \sin \Vert  \theta \Vert  d \omega(V, W_j) + \cos \Vert  \theta \Vert \sin \Vert \theta \Vert  d \omega(V, W_i) + \sin \Vert  \theta \Vert  \sin \Vert  \theta \Vert  d \omega(W_i, W_j).
\end{equation}

Now note that $\kappa(V_i) = - U((s,0)) \left( \frac{d}{ds} P \exp^{ss}(s Y_i)) \right)$.
Next, since $W_i$ is parallel in the horospherical metric, its covariant derivative along $s \mapsto P(\exp^{ss}(s Y_i))$ only has a component in the normal direction of the horosphere. Using this, we obtain
\begin{equation}\label{kappaW}
\kappa(W_i) = \frac{D}{ds} W_i = \left\langle \frac{D}{ds} W_i, \nu \right\rangle \nu = - \left\langle W_i, \frac{D}{ds} \nu \right\rangle \nu = - \langle W_i, U(s, 0) \rangle \nu.
\end{equation}
Since the mean curvature of horospheres in $(M, g)$ can be bounded in terms of the sectional curvature bounds $a$ and $b$ \cite[Corollary 4.2]{brinkarcher}, by \ref{kappaW} we see that there exists a constant $C = C(a, b)$ such that $|d \omega(V_i, W_j)|, |d \omega( W_i, W_j)| \leq C$. 
Noting that $|\sin \theta| \leq |\theta|$ completes the proof. 
\end{proof}

\begin{rem}\label{rem:otal}
The coordinates $\Phi$ used above are reminiscent of Otal's construction in the $n = 2$ case \cite{otalMLS} (see Lemma \ref{sintheta} below). The difference is that Otal uses geodesics, whereas we use horocycles, as totally geodesic hypersurfaces are extremely rare when $n > 2$. 
\end{rem}

In order to understand how $E_v$-cubes look in the $\Phi$ coordinates, 
we study images of ``vertical slices" $\Phi^{-1} \circ E_v(\{ x_0 \} \times Q(r))$. 
We proceed as follows:
\begin{enumerate}
\item Show that $\Phi^{-1} \circ E_v(\{ x_0 \} \times Q(r))$ has bounded $s$-coordinates (Lemma \ref{lemma:bddS});
\item Show that $\Phi^{-1} \circ E_v(\{ x_0 \} \times Q(r))$ has nearly constant $\theta$ coordinates (Lemma \ref{lemma:cstTHETA});
\item Show that the projection of the above set to the $s$-coordinate is roughly the shape of a parallelogram  $L(Q(r))$, where $L: \R^{n-1} \to \R^{n-1}$ is a linear map with $\det L = 1$ (Lemma \ref{lemma:Svol}).
\end{enumerate}
To prove Proposition \ref{volEv}, we then combine steps (2) and (3) with Lemma \ref{lem:stdsymp}.

Fix $y \in Q(r)$. 
Let $X = x_0 / \Vert x_0 \Vert$ and $Y = y / \Vert y \Vert$. Let $\tau = {\Vert y \Vert}/{\Vert x_0 \Vert}$.
Consider the curve $c(t) = E_v( t \tau X_0, t Y)$ for $t \in [0, \Vert y \Vert]$. 
%Fix $(X, Y)$ as above and consider the curve $t \mapsto c(t) \in T^1 \tilde M$ given by $c(t) = E_v(tX, tY)$. 
To prove Lemma \ref{lemma:bddS}, we seek to understand the horospherical distance between the footpoints of $v$ and $c(t)$. 
%We now introduce some notation to understand $c(t)$ more precisely.

Recall that $\pi: T^1 \tilde M \to \partial \tilde M$ denotes the forward projection of a unit tangent vector to the boundary at infinity. For $q \in \tilde M$, let $\pi_q$ denote the restriction of $\pi$ to $T_q^1 \tilde M$. We will also consider the restriction of $\pi$ to $W^{su}(w)$; we denote this restriction by $\pi |_{W^{su}(w)}$. (While we will use several ideas from \cite{hamreg} below, our above notation for restrictions of $\pi$ differs from Hamenst{\"a}dt's.)

\begin{rem}\label{rem:holonomy}
Compositions of the form $\pi_p^{-1} \circ \pi_q$ will frequently appear---these are also known as the \emph{holonomy maps of the weak stable foliation} between vertical transversals $T_q^1 \tilde M$ and $T_p^1 \tilde M$; similarly, the weak stable holonomy between unstable transversals will appear. (See also \cite[Appendix, Definition 3.3]{ballmann}.)
\end{rem}
Using the above notation, we can express $c(t)$ as a composition of holonomies:
\begin{equation}\label{eq:ct}
c(t) = \phi^{\overline t(t)} \pi |_{W^{su}(\exp^{ss}(tY))}^{-1} \circ \pi ( \pi_{P(\exp^{ss}(tY))}^{-1} \circ \pi(\exp^v(t \tau X_0))).
\end{equation}
In other words,
the point $c( \Vert y \Vert)$ can be obtained from $v$ by the following procedure:
\begin{enumerate}
\item Move $v$ to a vector based at $P (\exp^{ss} (y))$ and pointing toward $\exp^v{x_0}$ along the curve $\zeta(t) = \pi_{P(\exp^{ss}(t Y))}^{-1} \circ \pi (\exp^v (t \tau X_0))$.
\item Consider now the vertical curve in $T_{P (\exp^{ss} y)}^1 \tilde M$ connecting $\exp^{ss} (y)$ and $\zeta(\Vert y \Vert)$; 
call this curve $\overline{\zeta}(t)$. 
Then $\psi(t) = \pi|_{W^{su}(\exp^{ss}(ty)}^{-1} \circ \pi (\overline{\zeta}(t))$ is a curve in $W^{su}(\exp^{ss}(y))$ with the same forward endpoints in $\partial \tilde M$ as in $\overline{\zeta}(t)$. 
\item Flow each unit tangent vector of $\psi(t)$ until the basepoint intersects the stable horosphere $P W^{ss}(v)$.
\end{enumerate}

%

%\begin{lem}
%Fix $\xi \in \partial \tilde M$. Then the map 
%\begin{align*}
%(\tilde M, g) &\to (T^1 \tilde M, g^{{\rm Sas}}) \\
%p &\mapsto \pi_p^{-1} (\xi) 
%\end{align*}
%is Lipschitz, and the Lipschitz constant depends only $a, b, R$. 
%\end{lem}
%
%\begin{proof}
%For $\xi \in \partial \tilde M$, let $Z_{\xi}$ be the vector field on $\tilde M$ satisfying $\pi(Z_{\xi} (p)) = \xi$ for all $p \in \tilde M$; 
%in other words, $\pi_p^{-1} (\xi) = Z_{\xi}(q)$ for all $q$.
%For $p, q \in \tilde M$, let $\gamma(t)$ denote the unit-speed geodesic joining $p$ and $q$, and let $Y$ denote the parallel vector field along $\gamma$ with $Y(\gamma(0)) = Z_{\xi} (p)$.
%Then 
%\[
%d_{g^{{\rm Sas}}}(\pi_p^{-1} (\xi), \pi_q^{-1} (\xi) ) \leq d(p, q) + d_{T_q^1 \tilde M}(Y(q), Z_{\xi}(q))
%\]
%
%By the mean value theorem, there is $t_0 \in [0, d(p,q)]$ such that
%\[
%\langle Y, Z(q) \rangle - \langle Y, Z(p) \rangle = d(p,q) \langle \frac{D}{dt} \Big|_{t = t_0} Z, Y \rangle = d(p,q)\langle - U(Z) (\gamma'(t_0)), Y \rangle.
%\]
%By Lemma \ref{HolRic}, there is a constant $C = C(R, a, b)$ so that $|\langle - U(Z) (\gamma'(t_0)), Y \rangle| \leq C$. 
%\end{proof}

Before we bound the horospherical displacement resulting from each of the above steps, we prove a key preliminary lemma, which shows the stable holonomies $\pi_p \circ \pi_q^{-1}$ are almost shape-preserving when $d(p,q)$ is small.  
In general, it is known these maps are H{\"o}lder continuous, but not Lipschitz continuous. 
However, these holonomies are known to be absolutely continuous, and moreover, the Jacobians are close to 1 if transversals $T_p^1 \tilde M$ and $T_q^1 \tilde M$ are close to one another (this follows by a similar argument to the proof of \cite[Appendix, Theorem 5.1]{ballmann}, which concerns the holonomy of the \emph{strong} stable foliation); in other words, such weak stable holonomy maps are almost volume-preserving. 
In the lemma below, we show the stronger property that these holonomies are almost shape-preserving when applied to disks whose radius is comparable to $d(p, q)$. 

\begin{lem}\label{lem:anglelem}
Let $p(t) = P(\exp^{ss}(tY))$ for $t$ near $0$.
Let $\zeta(t) = \pi_{p(t)}^{-1} \circ \pi_p(\exp^v (t X))$.
Let $W(t)$ be a unit-norm vector field along $p(t)$ which is either normal to the horosphere $P(W^{ss}(v))$, or parallel relative to the horospherical metric induced by $g$.
Then 
\[ \langle \zeta(t) - \exp^{ss}(tY), W(t) \rangle = t \langle X, W(0) \rangle  + e(t),\] 
where $\Vert e(t) \Vert/ t \leq C t^{\alpha_0}$ for some constant $C = C( a, b, R)$. 
\end{lem}

\begin{proof}
While the holonomy $\pi_{p(t)}^{-1} \circ \pi_p$ is not in general differentiable for $t \neq 0$, the key observation is that $\zeta(t)$ is differentiable at $t = 0$. 

Let $\xi_t = \pi(\exp^v(tX))$ and let $Z^{\xi_t}$ as in (\ref{eq:covarZ}). 
Then $\zeta(t) = Z^{\xi_t}(p(t))$ and $\exp^{ss}(tY) = Z^{\xi_0}(p(t))$. 
We start by finding a naive \emph{a priori} upper bound for $\Vert Z^{\xi_t}(p(t)) - Z^{\xi_0}(p(t)) \Vert$ by considering a third point $Z^{\xi^t} (p(0))$.
By definition of $\xi_t$, the distance between $Z^{\xi^t} (p(0))$ and $Z^{\xi_0} (p(0)$ is $t \Vert X \Vert$.
Next, note that the vectors $Z^{\xi^t} (p(0))$ and $Z^{\xi^t}(p(t))$ are on the same weak stable leaf, and their footpoints are distance less than $t$ apart. Let $t_0$ be the time such that $\phi^{t_0} Z^{\xi^t} (p(0))$ and $Z^{\xi_t} q(t)$ are on the same strong stable leaf. The distance between the footpoints of these two vectors is still at most $t$, and
\cite[Lemma 3.9]{butt22finite} implies their Sasaki distance is bounded above by $(1 + b)t$.
Finally, the triangle inequality gives
\begin{equation}\label{aprioriZ}
\Vert Z^{\xi_t}(p(t)) - Z^{\xi_0}(p(t)) \Vert \leq (b + 3)t.
\end{equation}

Next, we claim $\Vert \nabla_{p'(t)} W(t) \Vert$ is bounded above by a constant $C(a, b)$ depending only on the sectional curvature bounds of the metric $g$.
This follows from similar arguments to (\ref{kappaW}).

Now fix $t > 0$ small, and for $s \in (-t, t)$, define
\begin{align*}
f_1(s) &= \langle \pi_{p(s)}^{-1} \circ \pi (\exp^v tX), W(s) \rangle = \langle Z^{\xi_t}(p(s)), W(s) \rangle  \\
f_2(s) &= \langle \pi_p^{-1} \circ \pi(\exp^v(sX)), W(0) \rangle = \langle \exp^v(sX), W(0) \rangle, \\
f_3(s) &= \langle \pi^{-1}_{p(s)} \circ \pi(v), W(s) \rangle = \langle Z^{\xi_0} (p(s)), W(s) \rangle.
\end{align*}
Observe that by the mean value theorem there exist $t_i \in [0, t]$, $i = 1, 2, 3$, such that
\begin{align*}
\langle \zeta(t) - \exp^{ss}(tY), W(t) \rangle &= \langle Z^{\xi_t}(p(t))- Z^{\xi_0}(p(t)), W(t) \rangle \\
&= (f_1(t) - f_1(0)) + (f_2(t) - f_2(0)) - (f_3(t) - f_3(0))\\
&= t \left(f_1'(t_1) + f_2'(t_2) + f_3'(t_3) \right).
\end{align*}
We have $f_2'(t_2) = \frac{d}{ds} \Big|_{s=t_2} \langle \exp^v(sX), W(0) \rangle = \langle X + e(t_2), V(0) \rangle$, where $|e(t)| \leq C t$ for some universal constant (depending on the geometry of the round sphere).
Since $Z^{\xi_0}(p(s))$ is the normal to the stable horosphere determined by $v$, we have $f_3'(s) = 0$ for all $s$.
 Using (\ref{eq:covarZ}), we compute
\[
0 = f_3'(s) = \langle U(Z^{\xi_0}(p(s)), W(s) \rangle + \left \langle Z^{\xi_0}(p(s)),  \nabla_{p'(s)} W(s)  \right \rangle.
\]
Finally, 
\begin{align*}
|f_1'(t_1)| &=  \left| \left\langle U(Z^{\xi_t}(p(t_1)), W(t_1) \rangle + \langle Z^{\xi_t}(p(t_1)), \nabla_{p'(t_1)} W(t_1) \right\rangle \right| \\
&= \left| \left \langle U(Z^{\xi_t}(p(t_1)) - U(Z^{\xi_0}(p(t_1)), W(t_1) \rangle + \langle Z^{\xi_t}(p(t_1)) - Z^{\xi_0}(p(t_1)), \nabla_{p'(t_1)} W(t_1) \right \rangle \right| \\
&\leq C \Vert Z^{\xi_t}(p(t)) - Z^{\xi_0}(p(t)) \Vert ^{\alpha_0} + \left\Vert \nabla_{p'(t_1)} W(t_1) \right\Vert \Vert Z^{\xi_t}(p(t)) - Z^{\xi_0}(p(t)) \Vert ,
\end{align*} 
where we used Lemma \ref{HolRic} in the last line.
By (\ref{aprioriZ}) and the bound for $\Vert \nabla_{p'(t)} W(t) \Vert$, we see that $|f_1'(t_1)| \leq C t^{\alpha_0}$ for some $C = C(a, b, R)$. 
\end{proof}

\begin{lem}\label{lem:sutovert}
Let $\phi(t)$ be a smooth curve in $W^{su}(v)$ with $\phi(0) = v$ and so that, in addition, the footpoint curve $P \phi(t)$ is a geodesic in the horosphere $P (W^{su}(v))$. 
Then consider the curve $\zeta(t) =  \pi_p^{-1} \circ \pi (\phi(t))$ in $T^1 _p \tilde M$.
We claim that $\zeta(t)$ is differentiable at $0$ with initial tangent vector $(U(v) + U(-v)) (P \phi)'(0)$ and, moreover,
\[
\zeta(t) - v = t (U(v) + U(-v)) (P \phi)'(0)  + e(t), 
\]
where $\frac{\Vert e(t) \Vert}{t} \leq C t^{\alpha_0}$, for some constant $C > 0$ depending only on $a, b, R$.  
\end{lem}

\begin{rem}
The fact that $\zeta'(0)$ exists and has the claimed form is the content of \cite[Lemma 2.1]{hamreg}; however, for the purposes of this paper, we require a more precise understanding of the error term.
\end{rem}

\begin{proof}
Let $Z_t = Z^{\pi(\phi(t))}$ as in (\ref{eq:covarZ});
% denote the gradient vector field $-{\rm grad} B_{\phi(t)}$, ie, the vector field such that $\pi(Z_t(q)) = \pi(\phi(t))$ for all $q \in \tilde M$; 
let $Y$ be any parallel vector field along the footpoint curve $P \phi$. 
By the proof of \cite[Lemma 2.1]{hamreg}, we then have
\begin{align*}
\frac{1}{t}\langle Z_t(p) - v, Y \rangle 
%&= \langle Z_t(p), Y \rangle - \langle Z_t(\phi(t)), Y \rangle + \langle Z_t(\phi(t), Y \rangle - \langle v, p \rangle\\
&=  \frac{1}{t} \int_0^t \langle U(Z_t) ((P\phi)'(s) , Y \rangle \, ds + \frac{\langle \phi(t), Y \rangle - \langle \phi(0), Y \rangle}{t}.
\end{align*}

As $t \to 0$, the first term limits to $\langle U(v)(P \phi)'(0)) , Y \rangle$, the value of the integrand at $s = 0$; the $o(t)$ error term depends on the variation of the integrand on the interval $[0,t]$. 
The proof of \cite[Lemma 3.9]{butt22finite} shows that $d((P\phi)'(s), (P\phi)'(0)) \leq Cs$ for some constant that depends only on $a$ and $b$. 
By Lemma \ref{HolRic}, the variation of $U$ depends only on $a, b, R$. 
So the error term is bounded above by $ t (C t^{\alpha})$, where $C$ and $\alpha$ are constants depending only on $a, b, R$. 

For the next term, we first note that, by the mean value theorem, there exists $s_t \in [0,t]$ such that
\begin{align*}
\frac{\langle \phi(t), Y \rangle - \langle \phi(0), Y \rangle}{t} &= \frac{d}{ds} \Big|_{s = s_t} \langle \phi(s), Y \rangle\\ 
&= \langle \frac{D}{ds} \Big|_{s = s_t} \phi(s), Y \rangle \tag{since $Y$ is parallel along $P \phi(t)$} \\
&= \langle U(-\phi(s_t))((P\phi)'(s_t)), Y \rangle.
\end{align*}
As $t \to 0$ the above limits to $\langle U(-v) (P \Phi'(0)), Y \rangle$, its value at $t = 0$.
Analysis analogous to that in the above paragraph shows the error term has the desired form. 
\end{proof}

\begin{cor}\label{cor:unstable}
Let $\zeta(t) = \exp^v (tX)$. Let $\psi(t) = \pi|_{W^{su}(v)}^{-1} \circ \pi (\zeta(t))$. 
Let $\exp$ denote the exponential map at $p = Pv$ of the horosphere $P W^{su}(v)$.
Then the horospherical distance between $P\psi(t)$ and  $ \exp(t (U(v)+U(-v))^{-1} X) $ is bounded above by $t (C t^{\alpha_0})$ for some $C = C(a, b, R)$. 
\end{cor}
   
\begin{proof}
By \cite[Corollary 2.2]{hamreg}, it is known that $(P \circ \psi)'(0)$ exists and is equal to $(U(v) + U(-v))^{-1} (X)$; we need to understand the error term.  
Let $Y(t) \in T_p P W^{su}(v)$ be such that $\exp(t Y(t)) = P \psi(t)$.
%(this is equivalent to $\psi(t) = - \exp^{ss}(t Y(t))$). 
Let $\phi_t(s) = \exp(s Y(t))$.
Let $\overline{\zeta_t}(s) = \pi_p^{-1} \circ \pi (\nu(\phi_t(s)))$.
By the previous lemma,
\[
\overline{\zeta_t}(s) - v = s (U(v) + U(-v)) Y(t) + e(s),
\] 
which, after setting $s = t$, rearranges to
\begin{align*}
t  Y(t) &= (U(v) + U(-v))^{-1}  ( \zeta(t) - v ) -  (U(v) + U(-v))^{-1} e(t). %\\
%&= \Vert (U(v) + U(-v))^{-1} \Vert t +  \Vert (U(v) + U(-v))^{-1} \Vert \Vert e(t) \Vert.
\end{align*}
Noting that $\zeta(t) - v = tX + o(t)$ (where the error term depends on the geometry of the round sphere), we see that
\begin{equation}\label{eq:tvec-conv}
t P \psi'(0) = (U(v) + U(-v))^{-1} t X = tY(t) + e(t)
\end{equation}
for some $e(t)$ such that $\Vert e(t) \Vert/t \leq Ct^{\alpha_0}$. 
After applying $\exp$ to both sides, for any $|t| \leq 1$ we get that the error is controlled by $a$ and $b$.
This completes the proof. 
%Now note that $d_{su}(p, P \psi(t)) = t \Vert X(t) \Vert = \Vert (U(v) + U(-v))^{-1}(X) \Vert t + e(t)$ for some other $e(t)$ satisfying $|e(t)|/t \leq C t^{\alpha}$.
%As such, $\exp()
\end{proof}

\begin{lem}\label{lem:time}
Let $v \in T^1 \tilde M$ and let $u \in W^{su}(v)$. Let $l \in \R^{>0}$ such that $P \phi^l u \in P W^{ss}(v)$.
Then the following hold
\begin{enumerate}
\item There exist constants $c$ and $C$, depending only on $a$ and $b$, such that
\[
c \leq \frac{d_{su}(P v, P u)}{d_{ss}(P v, P \phi^l v)} \leq C
\]
\item There exists a constant $C = C(a, b, R)$ so that the time $l$ satisfies
\[
l \leq C d_{su}(v, u)^2. 
\]
\end{enumerate} 
\end{lem}

\begin{proof}
Let $\gamma(s)$ be the geodesic perpendicular to $v$ such that $\gamma(s)$ intersects the geodesic $P \phi^l u$. 
Let $\eta(s)$ be the unit-speed geodesic with $\eta(0) = Pv$ and $\eta(s_0) = P \phi^l u$.
By \cite[Theorem 4.6]{HIH}, we have that the ratio $d_{ss}(Pv, P \phi^l u)/ s_0$ is bounded between two constants $c_1$ and $c_2$, depending only on $a$ and $b$.

Next, we estimate the angle $\theta$ between $\gamma'(0)$ and $\eta'(0)$. 
Let $V(s)$ denote the parallel transport of $v$ along $\eta(s)$. 
Note that $\theta$ is also the angle at $P \phi^l u$ between $V(s_0)$ and the normal vector $\nu$ of the stable horosphere $P W^{ss}(u)$. 
This allows us to estimate $\theta$ as follows:
\begin{equation}\label{eq:costheta}
\cos \theta = \langle V(s_0), \nu(s_0) \rangle = \langle V(0), \nu(0) \rangle + \int_0^{s_0} \langle - U(\nu(s))(\eta'(s)), V(s) \rangle \, ds,
\end{equation}
Now we can use this, together with 
\cite[Proposition 4.7]{HIH}, to see that the ratio $s_0/d_{su}(v,u)$ 
is between two constants $c_1'$ and $c_2'$, depending only on $a$ and $b$. From this, (1) follows. 

To prove (2), let $B$ denote the Busemann function whose zero set is the unstable horosphere $PW^{ss}(v)$. 
Then $l = B(\eta(s_0))$. 
By \cite{HIH} (see also \cite[Section 3]{butt22finite}), we have
$
f(\eta(s)) \leq s \sin (\theta) + C s^2, 
$
where $C$ depends only on $a$ and $b$. 
Moreover, from (\ref{eq:costheta}), we have that $\sin (\theta) \leq C' s_0$ for $C' = C(a,b,R)$. 
Thus, $f(\eta(s)) \leq C'' s^2$, which proves (2).
\end{proof}

For the statements of the following lemmas, 
let $P_{\theta}, P_s : \R^{n-1} \oplus \R^{n-1}$ denote the projections $(\theta, s) \mapsto \theta)$ and $(\theta, s) \mapsto s$, respectively. 

\begin{lem}\label{lemma:bddS}
Fix $x_0 \in B(r) \subset \R^{n-1}$ and consider $E_v( \{ x_0 \} \times B(r))$. 
Then there exists a constant $C = C(a,b, R)$, together with $S \leq \Vert x_0 \Vert + C r$, so that 
the footpoints satisfy \[
P_s(\Phi^{-1} \circ E_v( \{x_0 \} \times B(r))) \subset P_s(\Phi^{-1} \circ E_v(\{ 0 \} \times B(S)).
\] 
%In other words, the set $E_v( \{ x_0 \} \times B(r))$ has bounded $s$-coordinate with respect to the coordinates $\Phi$ introduced above, and moreover, this bound goes to zero as $r$ goes to 0. 
\end{lem}

\begin{proof}
Combine (\ref{eq:ct}) with Lemma \ref{lem:anglelem}, Corollary \ref{cor:unstable} and Lemma \ref{lem:time}.
\end{proof}

\begin{lem}\label{lemma:cstTHETA}
Fix $x_0 \in B(r) \subset \R^{n-1}$ and consider again $E_v( \{ x_0 \} \times B(r))$. 
%Let $P_{\theta}: (s, \theta) \in \R^{n-1} \times \R^{n-1} \mapsto \theta \in \R^{n-1}$ denote the natural projection.
Then there exists a positive constant $C = C(a,b,R)$, together with $\eta(r) \leq r C r^{\alpha_0}$ so that $$P_{\theta}( \Phi^{-1} \circ  E_v( \{ x_0 \} \times B(r))) \in B_{x_0} (\eta).$$ 
\end{lem}

\begin{proof}
Let $y \in B(r)$ and consider the curve $c(t) = E_v( t \tau X_0, t Y)$ from the proof of the discussion above Remark \ref{rem:holonomy}. 
By the previous lemma, the footpoint of $c( \Vert y \Vert)$ coincides with the footpoint of $\exp^{ss}( s Y)$ for some $s \leq \Vert x_0 \Vert + C \Vert y \Vert$. 
As such, we can write $c( \Vert y \Vert) = \pi_{P(\exp^{ss}(sY))}^{-1} ( \pi (\exp^v x_0))$. 
By Lemma \ref{lem:anglelem}, we see that 
\[ c( \Vert y \Vert)- \exp^{ss}( s Y) = \Vert x_0 \Vert + e(s), \]
for $e(s)$ satisfying $\Vert e(s) \Vert / s \leq C s^{\alpha_0}$ for some $C = C(a,b,R)$.
\end{proof}

\begin{lem}\label{lemma:Svol}
%Let $P_{s}: \R^{n-1} \oplus \R^{n-1} \to \R^{n-1}$ be the projection $(s, \theta) \mapsto s$.
Let $A_x = P_s \circ \Phi^{-1} \circ E_v (\{x\} \times Q(r))$. 
Then there exists $C = C(a, b, R)$, together with a linear map $L: \R^{n-1} \to \R^{n-1}$ with $\Vert L \Vert \leq C$ 
such that
\[
Q_{Lx}((1- Cr^{\alpha_0})r) \subset A_x \subset Q_{Lx}((1+ Cr^{\alpha_0})r).
\]
\end{lem}

\begin{proof}
Throughout this proof, we will use $d \omega$ to identify the tangent spaces $\mathcal{V}_v$ and $T_v W^{ss}$ as follows (see also (\ref{dualbasis}) above for more details).
Let $X_1 \dots, X_{n-1} \in \mathcal{V}_v$ be an orthonormal basis, and let $Y_1 \dots, Y_{n-1}$ be the basis of $T_v W^{ss}$ such that $d \omega (X_i, Y_j) = \delta_{ij}$. 

Let $Q(r)$ denote the cube $[-r, r]^{n-1} \subset \R^{n-1} \cong T_v W^{ss}$ and  
let $H = - \exp^{ss}(Q(r)) \subset W^{su}(-v)$. 
Now consider $O = \pi_p^{-1}\circ \pi (H)$.
Let $E(r) = (U(v) + U(-v))^{-1} Q(r) \subset T_v W^{ss}$. 
By Lemma \ref{lem:sutovert} , viewing $E(r) \subset \mathcal{V}_v$,
we have
\[
\exp^v((1 - Cr^{\alpha})E(r)) \subset O \subset \exp^v((1 + Cr^{\alpha}) E(r)).
\]

Now let $w = \exp^v x$ and consider $W^{ss}(w)$.
Let $w_0 \in W^{ss}(w)$ so that $\pi(-w_0) = \pi(-v)$. 
By Corollary \ref{cor:unstable}, the unstable distance between
$w_0$ and $\exp^{ss}((U(v) + U(-v))^{-1}(\exp^v(x)))$ is bounded above by $\Vert x \Vert C \Vert x \Vert^{\alpha_0}$ for some $C = C(a,b,R)$. 
We will see that the linear map $L: \R^{n-1} \to \R^{n-1}$ in the statement of the lemma is given by $(U(v) + U(-v))^{-1}$ viewed as a map from $\mathcal{V}_v$ with respect to the basis $\{ X_i \}_{i=1}^m$ to $T_v W^{ss}$ with respect to the basis $\{ Y_i \}_{i=1}^m$. 

Let $q$ denote the footpoint of $w_0$. 
Let $Y_j'$, $j = 1, \dots, n-1$ be the basis for $T_{w_0} W^{ss}$ obtained by parallel transporting the $Y_i$ along the horospherical geodesic from $p$ to $q$, with respect to the induced horospherical metric on $PW^{ss}(v)$. Let $X_i' \in \mathcal{V}_{w_0}$, $i = 1, \dots, n-1$, be the basis dual to the $Y_i$ with respect to $d \omega$. 
Now let $O' = \pi_q^{-1} \circ \pi (O)$.
By Lemma \ref{lem:anglelem} and the above, we have 
\[
\exp_{w_0}^v((1 - C'r^{\alpha_0})(E(r)) \subset O \subset \exp_{w_0}^v((1 + C'r^{\alpha_0})(E(r))).
\]
Next, let $H' = \pi_{W^{su}(-w_0)}^{-1} \circ \pi(O)$.
By  Corollary \ref{cor:unstable}, we have
\[
\exp^{ss}((1 - C'r^{\alpha_0})(U(w) + U(-w))E(r)) \subset H \subset \exp^{ss}((1 + C'r^{\alpha_0})(U(w) + U(-w))E(r)).
\]
By Lemma \ref{HolRic}, we have $(1 - Cr^{\alpha_0}) Q(r) \subset (U(w) + U(-w))E(r) \subset (1 + Cr^{\alpha_0}) Q(r)$, which means $\exp_{w_0}^{ss}((1 - Cr^{\alpha_0}) Q(r)) \subset H' \subset \exp_{w_0}^{ss}((1 + Cr^{\alpha_0}) Q(r))$.

Finally, to obtain $E_v(\{ x_0 \} \times Q(r))$ from $H'$, we must simply flow each unit tangent vector in $H'$ until its footpoint reaches the stable horosphere $P W^{ss}(v)$.  
Let $B$ denote the Busemann function determined by $v$. Given any $q' \in H'$, let $\gamma(s)$ denote the geodesic joining $p$ and $q' = \gamma(s_0)$. Then the amount of time we must flow $q'$ to reach $P W^{ss}(v)$ is at most $B(\gamma(s_0))$. 
We have $B(\gamma(s_0)) = s_0 \sin (\Vert x \Vert) + o(s_0^2)$, and it follows from \cite[Lemma 4.3]{HIH} that the error term depends only on $s_0$ and $b$, and we have that $s_0 \leq C r^2$ for some $C=C(a, b, R)$.  
Let $u_0$ be the vector with basepoint on $P W^{ss}(v)$ on the same flow line as $w_0$.
We have shown that the distance between $u_0$ and $Lx$ is bounded above by $\Vert x \Vert C \Vert x \Vert^{\alpha_0}$ for some $C = C(a, b, R)$. 

Moreover, by the Anosov property of the geodesic flow, (see, eg, \cite[Proposition 4.1]{HIH}), we see that flowing points of $H'$ for time $l$ distorts distances by a factor contained in $[e^{-bl}, e^{bl}]$.
Combining with our above bound for $B(\gamma(s_0))$ completes the proof.
\end{proof}

\begin{proof}[Proof of Proposition \ref{volEv}]
By Lemma \ref{lem:stdsymp}, it suffices to estimate ratio between the 
Euclidean volume of $\mathcal{Q}_r := \Phi^{-1} \circ E_v( Q(r) \times Q(r))$ and that of $Q(r) \times Q(r)$. 
First, we note that by Lemma \ref{lemma:cstTHETA}, images of vertical slices $\Phi^{-1} \circ E_v( \{ x \} \times Q(r))$ are nearly vertical slices in $(\theta, s)$ coordinates; more precisely,
there exists $\eta = \eta(r) \leq r C r^{\alpha_0}$ such that
\[\Phi^{-1} \circ E_v ( \{ x \} \times Q(r)) \subset Q_{x} (\eta) \times A_x, \]
where $Q_x(\eta)$ denotes the translated cube $ x + [-\eta, \eta]^{n-1}$. 
This suggests that $\mathcal{Q}_r$ is approximately the shape of a parallelogram. More precisely, 
let $L$ be the linear transformation from Lemma \ref{lemma:Svol}. 
Let $\overline{L}: \R^{n-1} \oplus \R^{n-1} \to \R^{n-1} \oplus \R^{n-1}$ be the shear transformation given by $(\theta, s) \mapsto (\theta, s + L(\theta))$. Since $\det (\overline{L}) = 1$, it suffices to show that
there exist $r_1 = (1 - Cr^{\alpha_0}) r$ and $r_2 = (1 + C r^{\alpha_0}) r$ such that 
\begin{equation}\label{eq:parallelogram}
\overline{L} (Q(r_1) \times Q(r_1)) \subset \mathcal{Q}_r \subset \overline{L} (Q(r_2) \times Q(r_2)).
\end{equation}

We start by noting that it follows from Lemma \ref{lemma:cstTHETA} that
$
Q(r_1) \subset P_{\theta} (\mathcal{Q}_r) \subset Q(r_2),
$
where $r_1 = (1 - Cr^{\alpha_0})r$ and $r_2 = (1 + Cr^{\alpha_0})r$, and we can additionally increase $C$ to ensure that $r_1 < r - \eta$ and $r_2 > r + \eta$.  
We then have
\[
\bigcup_{x \in Q(r_1)} \{ x \} \times A_x \subset \Phi^{-1} \circ E_v(Q(r) \times Q(r)) \subset \bigcup_{x \in Q(r_2)} \{ x \} \times A_x.
\]
By the previous lemma, $Q_{Lx}(r_1) \subset A_x \subset Q_{Lx}(r_2)$. 
The claim now follows from the fact that $\overline{L}(Q(r) \times Q(r)) = \cup_{x \in Q(r)} \{ x \} \times Q_{Lx}(r)$.
\end{proof}

%%%%
\subsubsection{Quasi-symplecticity}\label{fEv}
While Proposition \ref{volEv} allows us to determine the Liouville currents of $E_v$-cubes in $\partial^2 \tilde M$, it is not clear that images of $E_v$-cubes by the boundary map $\overline{f}: \partial^2 \tilde M \to \partial^2 \tilde N$ are still $E_v$-cubes.
However, the boundary map does preserve a coarser property of these cubes, defined in terms of the cross-ratio, 
which Hamenst{\"a}dt calls \emph{quasi-sympecticity}.
It turns out that for manifolds whose geodesic flow has $C^1$ Anosov splitting, this quasi-symplectic property of an embedded cube is enough to estimate its Liouville current \cite[Lemma 3.11]{ham99symplectic}. 
 
\begin{defn} \cite[p. 123]{ham99symplectic} 
Fix $\eta > 0$. Let $B(r) \subset \R^n$ be the ball of radius $r$ centered at the origin, and let $\phi_0(x,y)$ denote the dot product of $x, y \in \R^n$.
Let $\beta_1, \beta_2: B(r) \to \partial \tilde M$ be continuous embeddings so that 
\begin{equation*}
 |[\beta_1(x), \beta_1(0), \beta_2(y), \beta_2(0)] - \phi_0(x,y)| \leq \eta r^2 
\end{equation*}
for all $x, y \in B(r)$. 
We say the image $\beta_1(B(r)) \times \beta_2(B(r)) \subset \partial \tilde M \times \partial \tilde M \setminus \Delta$ is a $(1 + \eta)$-- \textit{quasi-symplectic r-ball}. 
\end{defn}

It follows from \cite[Corollary 2.10]{hamreg}, together with the short computation in Lemma \ref{lem:compsymp} below, that for any $\eta > 0$, there exists a sufficiently small $r$ such that $\pi(E_v( B(r) \times B(r)))$ is $(1 + \eta)$--quasi-symplectic. 
The proof relies on the connection between the cross-ratio and the \emph{temporal function}, which was discussed in (\ref{eq:cr-temp1}) above.

Hamenst{\"a}dt shows that if $c(t)$ is a curve in $T^1M$ with $c(0) = v$ and $c'(0) = \lambda X_0 + Y + Z$ with $Y \in T_v W^{ss}$ and $Z \in T_vW^{su}$, then we have the following Taylor expansion at $t = 0$:
\begin{equation}\label{eq:crTaylor}
\sigma(v, c(t)) + \sigma(c(t), v) = t^2 d \omega(Z, Y) + o(t^2),
\end{equation}
(This is also done in \cite[Lemma B.7]{liverani}). 
Next, we note the following. 

\begin{lem}\label{lem:compsymp}
For $(X, Y) \in \mathcal{V}_v \oplus T_v W^{ss}$, write $c(t) = E_v(tX, tY)$ (see Definition \ref{def:Ev}). 
Write $c'(0) = \lambda X_0 + Y' + Z$ for $Y' \in T_v W^{ss}$ and $Z \in T_vW^{su}$. 
Then $Y = Y'$ and
\[
d \omega (X, Y) = d \omega (Z, Y).
\]
\end{lem}

\begin{proof}
By construction, $\pi_{W^{su}(v)}^{-1} (c(t)) = - \exp^{ss}(tY)$. So $Y = (\pi_{W^{su}(v)}^{-1} \circ c)'(0)$, which equals $Y'$ by \cite[Corollary 2.2]{hamreg}. 
Similarly, we have $\pi_p^{-1}(c(t)) = \exp^v(tX)$. 
Hence $X = (\pi_p^{-1} \circ c)'(0)$. 
By \cite[Lemma 2.1]{hamreg}, we then have 
\[ \kappa(X) = U(v) dP c'(0) + \kappa(c'(0)) = U(v) dP(Y) + U(v) dP(Z) + U(-v) dP(Z) - U(v) dP(Y). \]
Next, since $dP(X) = 0$, the formula (\ref{sympformula}) gives, 
\begin{align*}
d \omega (X, Y) &= \langle \kappa(X), dP(Y) \rangle \\
&= \langle U(v) d P c'(0) + \kappa c'(0), dP(Y) \rangle \\
&= \langle (U(v) + U(-v)) dP(Z), dP(Y) \rangle + \langle (U(v) + U(-v)) dP(Y), dP(Y) \rangle\\ 
&= d \omega (Z, Y) + d \omega (Y, Y),
\end{align*}
where the last line follows from \cite[Lemma 2.7]{hamreg}. 
\end{proof}

For $c(t)$ as in the previous lemma, we have that the left-hand side of (\ref{eq:crTaylor}) equals $[\pi(v), \pi(\exp^v(tX)), \pi(-v), \pi(-\exp^{ss}(tY))]$. 
By (\ref{dualbasis}), we have $\phi_0(X, Y) = d \omega (X, Y)$.
Thus, given any $\eta > 0$, there exists a sufficiently small $r$ such that $E_v$ $r$-balls are $(1 + \eta)$-quasi-symplectic. 
For the purposes of this paper, we need to show that $\eta$ depends only on $r$ and our other fixed geometric data:

\begin{prop}\label{prop:qs}
There exists a constant $C = C(a, b, R)$
so that for any $v \in T^1 \tilde M$ and any $r > 0$, we have that $E_v(B(r) \times B(r))$ is $(1 + Cr^{\alpha_0})$--quasi-symplectic.
\end{prop}

\begin{proof}
We need to more explicitly understand the $o(t^2)$ error term in (\ref{eq:crTaylor}) for the particular curve $c(t) = E_v(tX, tY)$.
First, let $\phi(t) = \pi |_{W^{su}(v)}^{-1} c(t)$. 
As in the proof of \cite[Lemma 2.6]{hamreg}, we write
 $\sigma(v, c(t)) = f_1(t) + f_2(t),$
where $f_1(t) = B_{\phi(t)} (Pc(t)) - B_{\phi(t)} (Pc(0))$ and $f_2(t) =  B_{\phi(t)} (P\phi(0)) - B_{\phi(t)} (P\phi(t))$.
We begin by Taylor expanding $f_1(t)$. We will focus on the $o(t^2)$ error term, since the second degree Taylor polynomial is computed in \cite[Lemma 2.6]{hamreg}.  

As in the proof of the Taylor expansion of the $f_2(t)$ term in \cite[Lemma 2.6]{hamreg}, we start by letting $X(t) \in T_p^{1} \tilde M$ so that $\exp(t X(t)) = Pc(t)$, where $\exp$ denotes the exponential map in the stable horosphere $PW^{ss}(v)$. 
Now let $\psi_t(s) = \exp(s X(t))$. 
By the same argument as in the proof of Corollary \ref{cor:unstable} (see, in particular,  equation (\ref{eq:tvec-conv})), we have, for any $\tau \in (-t, t)$, that $d(Pc'(\tau), \psi_t'(\tau)) \leq C t^{\alpha}$ for some positive constants $C = C(a,b,R)$ and $\alpha = \alpha(a, b, R)$.
%We first claim that $\psi_t'(0) \to P c'(0)$ as $t \to 0$. 
%Indeed, by the equation for $c(t)$ and Lemma \ref{}, Corollary \ref{}, and Lemma \ref{} part (2), it follows that $c(t)$ is differentiable at $t = 0$, and hence so is $Pc(t)$, which proves the claim. Moreover, it follows from these same results that $d(Pc'(0), \psi_t'(0)) \leq C t^{\alpha}$ for some positive constants $C = C(a,b,R)$ and $\alpha = \alpha(a, b, R)$. 

Since $\psi_t(t) = Pc(t)$, the fundamental theorem of calculus gives 
\[
f_1(t) = \int_0^t \langle Z_t, \psi_t'(s) \rangle \, ds,
\]
where $Z_t$ is the vector field on $\tilde M$ such that $\pi(Z_t(x)) = \pi(\phi(t))$ for all $x \in \tilde M$. 
We can then write
\[
\langle Z_t, \psi_t'(s) \rangle  = \langle Z_t, \psi_t'(0) \rangle +  \int_0^s \left( \langle U(Z_t) \psi_t'(\tau), \psi_t'(\tau) \rangle - \left \langle Z_t, \frac{D}{d \tau} \psi_t'(\tau) \right \rangle \, \right) d \tau.
\]

For the first term,   
by Lemma \ref{lem:anglelem} combined with the discussion above, we deduce that $\langle Z_t, \psi_t'(0) \rangle \to \langle Z_0, Pc'(0) \rangle$ with H{\"o}lder error term of the form $C t^{\alpha_0}$ for some $C = C(a,b, R)$. 
For the second term, we obtain a similar error term using Lemma \ref{HolRic}.
For the third term, we simplify, as in (\ref{kappaW}), and obtain
\[
\frac{D}{d \tau} (\psi_t)'(\tau) = \langle U(Z_0(\psi_t (\tau)), \psi_t'(\tau) \rangle Z_0(\psi_t(\tau)).
\]
Hence, we see that the above integral converges to the value of the integrand at $\tau = 0$, and moreover, the error term is again of the form $C s^{\alpha_0}$.

The argument for the expansion of $f_2(t)$ is very similar, and we omit it. 
\end{proof}

In light of Proposition \ref{crmain}, 
the map $\overline{f}: \partial^2 \tilde M \to \partial^2 \tilde N$ preserves the quasi-symplectic property to the following extent.

\begin{lem}
%{\color{purple}
 Let $(M, g)$ and $(N,g_0)$ as in the hypotheses of Theorem \ref{thm:volest}.
%} 
Let $\delta_0 = \delta_0(\eps)$ and $\eps' = \eps'(\eps)$ as in Proposition \ref{crmain}. 
Let $r > \delta_0$ and let $A(\delta_0, r)$ denote the annulus $B(r) \setminus B(\delta_0)$. 
Suppose that $\mathcal{B}$ is a $(1 + \eta)$-quasi-symplectic $r$-ball. 
Let $\overline{\eta} = (1 + \eta) \eps' + \eta$. 
Then $\overline{f}(\mathcal{B})$ satisfies the $(1 + \overline{\eta})$--quasi-symplectic condition for all $x, y \in A(\delta_0, r)$.

In particular, if $r = 2 \delta_0$, then we can take 
%{\color{purple}
$\overline{\eta} \leq C \eps^{\alpha}$ for some $C$ depending only on $n$, $\Gamma$, $D$, $i_0$, $a$, $b$, $R$ and any $\alpha < \alpha_0 (1 - \eps) a^2/b$. 
%}
%the diameter comes in to find the QI constants, among other things
\end{lem}

\begin{proof}
If $\mathcal{B}$ is a $(1 + \eta)$-quasisymplectic $r$-ball, there are maps
$\beta_i: B(r) \to \partial \tilde M$ for $i = 1,2$ with $\mathcal{B} = \beta_1(B(r))  \times \beta_2(B(r))$ such that
\begin{equation}\label{qeta}
|[\beta_1(x), \beta_1(0), \beta_2(y), \beta_2(0)]_M - \phi_0(x, y)| \leq \eta r^2.
\end{equation}
Using the triangle inequality, Proposition \ref{crmain}, and (\ref{qeta}) gives
\begin{align*}
&| [\overline f \circ \beta_1(x), \overline f \circ \beta_1(0), \overline f \circ \beta_2(y), \overline f \circ \beta_2(0) ]_N - \phi_0(x, y)| \\
&\leq \eps'  [\beta_1(x), \beta_1(0), \beta_2(y), \beta_2(0)]_M + |[\beta_1(x), \beta_1(0), \beta_2(y), \beta_2(0)]_M  - \phi_0(x, y)| \\
&\leq \eps'  [\beta_1(x), \beta_1(0), \beta_2(y), \beta_2(0)]_M + \eta r^2 \\ 
&\leq \eps'  (1 + \eta) r^2 + \eta r^2,
\end{align*} 
which shows $\overline{\eta} = (1 + \eta) \eps' + \eta$.
Noting that $\eta = C r^{\alpha_0}$ (Proposition \ref{prop:qs}), together with the fact that $\eps'$ and $\delta_0$ are both of the form $C \eps^{A^{-1} a^2/b}$ (Proposition \ref{crmain}) completes the proof.
\end{proof}
%%%%%%

\begin{prop}\label{volfEv}
%{\color{purple} 
Let $(M, g)$ and $(N,g_0)$ as in the hypotheses of Theorem \ref{thm:volest}.
%}
Let $r = 2 \delta_0 = 2 \delta_0(\eps)$ as above.
Then, for any $v \in T^1 \tilde M$, we have
\[
1 - C \eps^{\alpha} \leq \frac{ \lambda^N(\overline{f} \circ E_v(Q(r) \times Q(r)))}{{\rm vol}_{\R^{n-1}} (Q(r) \times Q(r))} \leq 1 + C \eps^{\alpha},
\]
%{\color{purple}
$\overline{\eta} \leq C \eps^{\alpha}$ for some $C$ depending only on $n$, $\Gamma$, $D$, $i_0$, $a$, $b$, $R$ and any $\alpha < \alpha_0 (1 - \eps) a^2/b$. 
%}
\end{prop}

\begin{proof}
Since $N$ has $C^1$ Anosov splitting, the Bowen bracket $[w_1, w_2] := W^{su}(w_1) \cap W^{cs}(w_2)$ is a $C^1$ function of $w_1$ and $w_2$ (see Definition \ref{def:bbtf}).
We can thus use it to define $C^1$ coordinates $\Psi$ on the space of geodesics as follows.
Let $Y_1 \dots, Y_{n-1}$ be a basis of $T_v W^{ss}$ such that the $dP(Y_i)$ are orthonormal with respect to the metric $g$. Now let $Z_1, \dots, Z_{n-1}$ be the dual basis of $T_v W^{su}$ with respect to $d \omega$ at $v$, ie, such that $d \omega_v (Y_i, Z_j) = \delta_{ij}$. 
Using these bases, we identify each of $T_v W^{ss}$ and $T_v W^{su}$ with a copy of $\R^{n-1}$, ie, 
\[ T_v W^{ss} \ni  Y = \sum y_i Y_i \longleftrightarrow (y_1, \dots, y_{n-1}) = y \in \R^{n-1}, \] 
and analogously for $T_v W^{su}$. 
Then we define
\begin{align*}
\Psi: \R^{n-1} \oplus \R^{n-1} &\to T^1 \tilde M \\
(y , z) &\mapsto [\exp^{ss}(Y), \exp^{su}(Z)].
\end{align*}
Since $d \omega_v (Y_i, Z_j) = \delta_{ij}$, the pullback $\rho = \Psi^* d \omega$ agrees with the standard symplectic form $\rho_0$ on $\R^{n-1} \oplus \R^{n-1}$ at the origin, and by continuity, $\rho$ is close to $\rho_0$ on small balls. We now make the latter statement more precise.
 Let $Y \in T_v W^{ss}$ and $Z \in T_v W^{su}$, let $\Sigma_{Y, Z}$ be the parametrized surface given by $(s, t) \mapsto \Psi(sy, tz)$ for $(s, t) \in [0,1]^2$.
%  the image of
% \begin{align*}
% [0,1] \times [0,1] &\to T^1 \tilde M \\
% (s, t) &\mapsto \Psi(sy, tz).
% \end{align*}
 As in \cite{ham99symplectic}, define 
 \[\phi(y, z) = \int_{\Sigma_{y,z}} d \omega = \int_{\Psi^{-1}(\Sigma_{y,z})} \rho.\] 
 Since $\Psi$ is $C^1$, we can apply Stokes' theorem to the leftmost integral above to obtain 
 \[ \phi(y, z) = \int_{\partial \Sigma_{y,z}} \omega = \sigma(\exp^{ss}(Y), \exp^{su}(Z)). \]
 Using the Taylor expansion for the temporal function $\sigma$ in (\ref{eq:crTaylor}), together with the explicit estimate of the error term given by the proof of Proposition \ref{prop:qs}, we obtain 
 \[|\phi(y,z) - d \omega_v(Y, Z)| \leq Cr (\Vert y \Vert \Vert z \Vert)\] 
 for 
 some $C = C(a, b, R)$. (We have also used that the stable exponent $\alpha_0 = 1$ since $N$ has $C^1$ Anosov splitting.) 
 But $d \omega(Y, Z) = \sum_i y_i z_i = \int_{\Psi^{-1}(\Sigma_{y,z})} \rho_0$, where $\rho_0$ is the standard symplectic form on $\R^{n-1} \oplus \R^{n-1}$.  
This gives 
\begin{equation}\label{property4}
|\phi(y, z) - \phi_0(y, z)| \leq C r \Vert y \Vert  \Vert z \Vert.
\end{equation}
(This is a quantitative version of property 4) in \cite[p. 130]{ham99symplectic}.)
 
Now let $\mathcal{F}: T^1M \to T^1 N$ as in (\ref{eq:oeq}) %{\color{purple} 
and \eqref{eq:thm-oe}
%} 
be an orbit equivalence, and let
 $\beta = (\beta_1, \beta_2): \R^{n-1} \oplus \R^{n-1} \to \R^{n-1} \oplus \R^{n-1}$ be given by 
$\Psi^{-1} \circ \mathcal{F} \circ E_v$. 
By \cite[Lemma 3.9]{ham99symplectic}, we have 
\begin{align*}
\phi(\beta_1 x, \beta_2 y) &= \phi( \Psi^{-1} \circ \mathcal{F}  \circ E_v(x, y) )\\
&= [\overline{f}(\pi(v)), \overline{f}(\pi(\exp^v tX)), \overline{f}(\pi(-v)), \overline{f} (\pi(-\exp^{ss} (tY)) ]. 
\end{align*}
 By the previous Lemma, we see that $\beta$ restricted to $A(\delta_0, r) \times A(\delta_0, r)$ satisfies the  $(1 + \overline{\eta}(r))$-quasi-symplectic condition.
Using this, together with with (\ref{property4}), an argument identical to that of \cite[Lemma 3.11]{ham99symplectic} shows there is a constant $k = k(n)$ such that $\beta$ restricted to $A(\delta_0, r) \times A(\delta_0, r)$ satisfies the $(1 + k(n) \overline{\eta})$--quasi-symplectic condition relative to the standard symplectic form $\rho_0$.
%{\color{red} this is if $Cr^{\alpha}$ in (\ref{property4}) is less than $\overline{\eta}$. But our $\eta$ is of the form $Cr^{\alpha}$}.
 The argument of \cite[Proposition 3.4]{ham99symplectic}, restricted to $A(\delta_0, r) \times A(\delta_0, r)$, still shows there is a constant $k' = k'(n)$ such that $\beta_1$ and $\beta_2$ are $k'(n) \overline{\eta}$-close to a dual pair of linear maps $L$ and $L^{-t}$ on this domain.
Since $\partial Q(r) \times \partial Q(r) \subset A(\delta_0, r) \times A(\delta_0, r)$, we have that the volume of $\beta (Q(r) \times Q(r))$ with respect to $\rho_0$ is contained in the interval ${\rm vol}_{\R^{n-1}}(Q(r))^2 [1 - k(n) \overline{\eta}, 1 + k(n) \overline{\eta}]$. 
From (\ref{property4}), it follows that the ratio of  ${\rm vol}_{\rho_0}(\beta(Q(r) \times Q(r))$ and ${\rm vol}_{\rho}(\beta(Q(r) \times Q(r))$ is contained in the interval $[1 - C r^{\alpha}, 1 + C r^{\alpha}]$ for $C$ depending only on $n, a, b, R$ and $\alpha = \alpha_0 A^{-1} a^2/b$.
\end{proof}

Combining Propositions \ref{volEv} and \ref{volfEv} we obtain
\begin{prop}\label{prop:LC}
%{\color{purple} 
Let $(M, g)$, $(N,g_0)$ and $\alpha_0 = \alpha_0(a,b)$ as in the hypotheses of Theorem \ref{thm:volest}.
%} 
%\sout{Let $\alpha_0 = \alpha_0(a, b, R)$ as in Proposition \ref{volEv}, and} 
let $r = 2 \delta_0 = 2 \delta_0(\eps)$ as in Proposition \ref{volfEv}.
Then, for any $v \in T^1 \tilde M$, we have
\[
1 - C \eps^{\alpha} \leq \frac{\lambda^M(E_v(Q(r) \times Q(r)))}{\lambda^N(\overline{f}(E_v(Q(r) \times Q(r))))} \leq 1 + C \eps^{\alpha}
\]
%{\color{purple}
$\overline{\eta} \leq C \eps^{\alpha}$ for some $C$ depending only on $n$, $\Gamma$, $D$, $i_0$, $a$, $b$, $R$ and any $\alpha < \alpha_0 (1 - \eps) a^2/b$.
%}
\end{prop}

\subsection{A controlled orbit equivalence}\label{almostconj}

Let $\phi^t$ denote the geodesic flow of $M$ and let $\psi^t$ denote the geodesic flow of $N$. 
If the marked length spectra of $M$ and $N$ are equal, then the flows $\phi^t$ and $\psi^t$ are \emph{conjugate} \cite{hamconj, bourdon},
 ie, there is a homeomorphism ${\mathcal F}: T^1 M \to T^1 N$ such that 
\[ \mathcal{F}(\phi^t v) = \psi^t \mathcal{F}(v) \]
for all $t \in \R, v \in T^1 M$.  
If, in addition to this, $M$ and $N$ have the same Liouville current, then $T^1 M$ and $T^1 N$ have the same Liouville measure, so ${\rm Vol}(M) = {\rm Vol}(N)$. 

If the lengths of closed geodesics of $M$ and $N$ are instead $\eps$-close as in (\ref{MLSass}), the geodesic flows may not be conjugate. However, so long as $M$ and $N$ are negatively curved and have isomorphic fundamental groups, their geodesic flows are \emph{orbit-equivalent} \cite{gromov3rmks}. 
This means there exists a homeomorphism $\mathcal{F}: T^1  M \to T^1  N$, and a function (cocycle) $a(t,v)$ such that
\begin{equation}\label{eq:oeq2}
\mathcal{F}(\phi^t v) = \psi^{a(t,v)} \mathcal{F}(v)
\end{equation}
for all $t \in \R, v \in T^1 M$.

In this section, we will use the assumption of approximately equal lengths (\ref{MLSass}) to show there is an orbit equivalence where the time-change $a(t,v)$ is close to $t$. 
In fact, we can more generally control the speed of the time change in terms of the ratio $\mathcal{L}_{g_0} \circ f_* / \mathcal{L}_g$ without necessarily assuming this ratio is close to 1. 
Moreover, we obtain an explicit formula for the H{\"o}lder exponent, which was used in the proof of Proposition \ref{crmain}. 

\begin{thm}\label{thm:oe}
Suppose $(M, g)$ and $(N, g_0)$ have diameter at most $D$ and sectional curvatures in the interval $[-b^2, -a^2]$.
% Let $i_M$ and $i_N$ denote their respective injectivity radii.
Suppose there is $A \geq 1$ such that $f: (M, g) \to (N, g_0)$ and $h: (N, g_0) \to (M, g)$ are $A$-Lipschitz homotopy equivalences, with $f \circ h$ homotopic to the identity.
Let $K_1, K_2 > 0$ such that for all $\gamma \in \Gamma$, we have
\[
K_1 \leq \frac{\mathcal{L}_{g_0} (f_* \gamma)}{\mathcal{L}_g(\gamma)} \leq K_2. 
\]

Then for any $\delta > 0$, there is an orbit equivalence $\mathcal{F}: T^1 M \to T^1 N$ which is $C^1$ along orbits,  transversally H{\"o}lder continuous, and such that the following hold:
%there is a small enough $\delta_0 = \delta_0(a,b)$, together with a constant $C = C(a, b, D, A)$,
%curvature, diameters, A.
\begin{enumerate}
\item\label{thm:time-change} (\ref{eq:oeq2}) holds with $(K_1 - \delta) t \leq a(t, v) \leq (K_2 + \delta) t$ for all $t \in \R$ and all $v \in T^1 M$;
\item\label{thm:hold-exp} Let $\alpha = (K_1 - \delta) a/b$. Then there exists a constant $C = C(a, b, D, A)$ so that for every $v, w \in T^1 M$ we have
\[
d(\mathcal{F}(v), \mathcal{F}(w)) \leq C d(v, w)^{\alpha}. 
\] 
\end{enumerate}
\end{thm}

\begin{rem}
If $M$ and $N$ have dimension $n \geq 3$, then the constant $A$ can be controlled in terms of the dimension $n$, the fundamental group $\Gamma$, the sectional curvature bound $b$, the upper bound $D$ on the diameters, and a lower bound $i_0$ on the injectivity radii.
See Proposition \ref{prop:contr-qi}.
As such, for $n \geq 3$, the constant $C$ in part (\ref{thm:hold-exp}) of the above theorem depends only on $n, \Gamma, a, b, D, i_0$. 
\end{rem}

%{\color{blue}
\begin{rem}
When $K_1 = 1 - \eps$ and $K_2 = 1 + \eps$, 
a similar result to (\ref{thm:time-change}) follows using the more general approximate Livsic theorems in \cite{finitelivsic} and \cite{skatok}, but our direct method yields a better estimate in this case.
See \cite[Remark 2.10]{butt22finite} for more details.
Moreover, in \cite[Proposition 2.4]{butt22finite} we showed a weaker form of (\ref{thm:hold-exp}) with $\alpha = A^{-1} a/b$.
\end{rem}
%}

Our construction of the map $\mathcal{F}$ follows the method of Gromov \cite{gromov3rmks} and also uses ideas of Bourdon \cite{bourdon}.
Both constructions rely heavily on the boundary homeomorphism $\overline{f}: \partial \tilde M \to \partial \tilde N$ induced by the homotopy equivalence $f: M \to N$ (see Construction \ref{bdrymap}). 
As such, it is useful to first obtain control of the quasi-isometry constants of the lift $\tilde f : \tilde M \to \tilde N$. 

\subsubsection{A controlled quasi-isometry}
In this section, we show that the lift $\tilde f$ of the
initial 
homotopy equivalence $f: M \to N$ in the setup of Theorems \ref{mainthm} and \ref{thm:volest} can be modified within it homotopy class so that its lifts to quasi-isometry $\tilde M \to \tilde N$ with controlled constants. 

\begin{prop}\label{prop:contr-qi}
Let $(M, g)$ and $(N, g_0)$ be a pair of homotopy-equivalent closed Riemannian manifolds of dimension $n \geq 3$, with fundamental group $\Gamma$, diameter at most $D$, injectivity radius at least $i_0$, and sectional curvatures contained in the interval $[-b^2, 0)$. 
Then there exist constants $A, B$, depending only on $n, \Gamma, D, i_0, b$, such that any $\Gamma$-equivariant homotopy equivalence $f': \tilde M \to \tilde N$ is homotopic to a $\Gamma$-equivariant map $f: \tilde M \to \tilde N$ satisfying
\begin{equation}\label{eq:QI}
A^{-1} d_g(x, y) - B \leq d_{g_0}(f(x), f(y)) \leq A d_g(x, y)
\end{equation}
for all $x, y \in \tilde M$. 
\end{prop}

As mentioned in Construction \ref{bdrymap}, it is well known that any homotopy equivalence $f': M \to N$ inducing the starting isomorphism $f_*$ of fundamental groups lifts to a quasi-isometry $\tilde M \to \tilde N$ (see, for instance, \cite[p. 86]{bp}).
As in \cite[proof of Lemma C.1]{bp}, we note that since $f':M \to N$ is a continuous map, it is homotopic to a smooth map (see, for instance, \cite[Proposition 17.8]{bott-tu}). Thus, since $M$ is compact, we can assume without loss of generality there exists a constant $A$ so that $f': M \to N$ (as well as some homotopy inverse $h: N \to M$ of $f'$) is $A$-Lipschitz, which gives an upper bound as in \eqref{eq:QI}.   
From here, it is straightforward to see that the lower bound in \eqref{eq:QI} holds for some $B = B(A, D)$; see \cite[Lemma 5.1]{butt22finite}. 
%we verified that $B$ depends explicitly on $A$ and the diameter bound $D$, following the arguments in \cite{bp}. 
Hence, to prove Proposition \ref{prop:contr-qi}, it suffices to show the following. 

\begin{prop}\label{prop:contr-A}
Let $(M, g)$ and $(N, g_0)$ be as in the hypotheses of the previous proposition. 
Let $f: (M, g) \to (N, g_0)$ be a $C^1$ map. 
Then there exists a constant $A = A(n, \Gamma, i_0, D, b)$ such that $f$ is homotopic to an $A$-Lipschitz map.
\end{prop}

%\begin{rem}
Note the above proposition is false in dimension 2. 
Indeed, given $f:(M, g) \to (N, g_0)$, let ${\rm Lip}(f)_{g, g_0}$ denote its Lipschitz constant, ie, the supremum over all $x \neq y \in M$ of the quantity $d_{g_0}(f(x), f(y))/d_{g}(x, y)$. 
Now let $(M, g)$ be a hyperbolic surface and let $f: M \to M$ be the identity map. 
Since the mapping class group of $M$ is not compact, there is a sequence $h_n \in {\rm Diff}(M)$ so that ${\rm Lip}(h_n)_{g, g} \to \infty$. As such, ${\rm Lip}(f)_{g, h_n^*g} \to \infty$ even though $g$ and $h_n^* g$ are isometric, and hence have identical diameter, injectivity radius, and sectional curvature bounds. 
In dimensions 3 or more, on the other hand, 
we have the following finiteness of the mapping class group which is an immediate consequence of \cite[Theorem 5.4 A]{gromov-hyp}.

\begin{lem}\label{lem:MCG}
Let $(M, g)$ be a closed negatively curved manifold of dimension at least 3. 
Let ${\rm Diff}(M)$ denote the group of smooth self-diffeomorphisms of $M$, 
and let ${\rm Diff}_0(M) \subset {\rm Diff}(M)$ denote the subgroup consisting of diffeomophisms which are homotopic  to the identity. 
Then the quotient group ${\rm Diff}(M)/ {\rm Diff}_0(M)$ is finite. 
\end{lem}

\begin{rem}
In higher dimensions, the subgroup ${\rm Diff}_0(M)$ above may be larger than the subgroup ${\rm Diff}^0(M)$ of diffeomorphisms \emph{isotopic} to the identity, equivalently, the connected component of the identity in ${\rm Diff}(M)$. 
The term ``mapping class group" is also used to mean $\pi_0({\rm Diff}(M)) = {\rm Diff}(M)/{\rm Diff}^0(M)$ elsewhere in the literature. 
The size of the latter group is a more delicate matter than what is needed for our purposes, and in fact, it follows from work of Farrell--Jones that $\pi_0({\rm Diff}(M))$ is infinite when $n \geq 11$ (see \cite[Corollary 10.16 and equation (10.28)]{farrell-jones-mostow})
\end{rem}

\begin{proof}
It is well known that ${\rm Diff}(M)/{\rm Diff}_0(M)$ is a subgroup of the outer automorphism group ${\rm Out}(\pi_1(M))$. (See, for instance, \cite[Remark 8]{FOannals}.)
Indeed, 
let ${\rm HE}(M)$ denote the group of self-homotopy equivalences of $M$ and let ${\rm HE}_0(M)$ denote the subgroup of all such which are homotopic to the identity. 
Then ${\rm Diff}(M) / {\rm Diff}_0(M)$ is a subgroup of ${\rm HE}(M)/ {\rm HE}_0(M)$, and the latter is isomorphic to ${\rm Out} (\pi_1(M))$ since $M$ is a $K(\pi, 1)$ space. 

Since $M$ is negatively curved of dimension at least 3, by \cite[Theorem 5.4 A]{gromov-hyp}, we have ${\rm Out}(\pi_1(M))$ is finite. This completes the proof.
\end{proof}

\begin{proof}[Proof of Proposition \ref{prop:contr-A}]
Given a $C^1$ map $f: (M, g) \to (N, g_0)$, write 
\[
\Vert df \Vert_{g, g_0} = \sup_{0 \neq v \in TM} \frac{\Vert df_p(v) \Vert_{g_0}}{\Vert v \Vert_g}.
\] 
We will show that there is a constant $A = A(n, \Gamma, i_0, D, b)$ so that after possibly changing $f$ in its homotopy class, we have $\Vert df \Vert_{g, g_0} \leq A$ for all $g, g_0$ as in the statement of the proposition.  
Suppose for contradiction that for each $n \in \N$ there exist metrics $g_n$ and $(g_0)_n$ with injectivity radius at least $i_0$, diameter at most $D$ and sectional curvatures in the interval $[-b^2, 0)$ such that for any $C^1$ map $f_n$ homotopic to $f$, we have
$\Vert df_n \Vert_{g_n, (g_0)_n} \geq n$. 

By Gromov's systolic inequality \cite[0.1.A]{gromov1983filling}, there exists $v_0 = v_0(i_0, n)$ so that ${\rm Vol}_{g_n}(M)$ and ${\rm Vol}_{(g_0)_n}(N)$ are both bounded below by $v_0$ for all $n$. 
Hence, the sequences $g_n$ and $(g_0)_n$ satisfy the bounded geometry hypotheses of the main theorem of \cite{greenewu}. 
This means that, after passing to subsequences, there exist $\phi_n \in {\rm Diff}(M)$, $(\phi_0)_n \in {\rm Diff}(N)$ and $C^{1, \beta}$ (for any $0 < \beta < 1$) Riemannian metrics $g_{\infty}$ and $(g_0)_{\infty}$ on $M$ and $N$, respectively, so that 
$\phi_n^* g_n \to g_{\infty}$ in $C^{1, \beta}(M)$ and $(\phi_0)_n^* (g_0)_n \to (g_0)_{\infty}$ in $C^{1, \beta}(N)$.
%{\color{blue} controlling diameter??}

Since ${\rm Diff}(M)/{\rm Diff}_0(M)$ is finite by Lemma \ref{lem:MCG}, after passing to a further subsequence, we have that there exists a single $h \in {\rm Diff}(M)$ such that for all $n$, there exists $\psi_n \in {\rm Diff}_0(M)$ so that $\phi_n = h \circ \psi_n$. Analogously, there is $h_0 \in {\rm Diff}(N)$ and $(\psi_0)_n \in {\rm Diff}_0(N)$ so that $(\phi_0)_n = h_0 \circ (\psi_0)_n$. 
This means 
$\psi_n^* g_n \to (h^{-1})^* g_{\infty}$ and $(\psi_0)_n^* (g_0)_n \to ((h_0)^{-1})^* (g_0)_{\infty}$ in the $C^{1, \beta}$, and hence $C^0$ topologies.
Hence 
\[
\Vert d ((\psi_0)_n \circ f \circ \psi_n^{-1}) \Vert_{g_n, (g_0)_n} =  \Vert df \Vert_{\psi_n^* g_n, (\psi_0)_n^* (g_0)_n} \to \Vert df \Vert_{h^{-1}_* g_{\infty}, h^{-1}_* g_{\infty}}.
\]
The right hand side is finite because $f$ is $C^1$ and $M$ is compact. 
This contradicts the initial construction of the sequences $g_n$ and $(g_0)_n$.  
\end{proof}

\subsubsection{Combining the constructions of Bourdon and Gromov}
We now briefly recall the construction of Gromov \cite{gromov3rmks}. 
The first step is to use $f$ to construct a preliminary continuous map $\mathcal{F}_0: T^1 M \to T^1 N$ which sends geodesics to (reparametrized) geodesics, but not necessarily injectively. 
If $p$ denotes the footpoint of $v$, then $\mathcal{F}_0(v)$ is defined to be the orthogonal projection of the point $f(p)$ onto the bi-infinite geodesic determined by $(f(\pi(-v)), f(\pi(v)) \in \partial^2 \tilde N$. 
Since $\mathcal{F}_0$ sends geodesics to geodesics there exists a function (cocycle) $b: T^1 M \times \R \to \R$ such that 
\begin{equation}\label{eq:F_0}
\mathcal{F}_0 (\phi^t v) = \psi^{b(t, v)} \mathcal{F}_0(v). 
\end{equation}
Later, we will explain Gromov's key averaging trick used to obtain an injective orbit equivalence $\mathcal{F}$ from $\mathcal{F}_0$ (see \eqref{eq:average} below).

If $\mathcal{L}_g = \mathcal{L}_{g_0} \circ f_*$, one can use the Livsic theorem to upgrade the orbit equivalence $\mathcal{F}$ to a time-preserving conjugacy (see \cite[Chapter 2.2]{hasskatok}). Alternatively,
in \cite{bourdon}, Bourdon directly shows that if $\overline{f}: \partial \tilde M \to \partial \tilde N$ is cross-ratio--preserving (which holds when the marked length spectra of $M$ and $N$ coincide), then there exists a conjugacy of geodesic flows $\mathcal{F}: T^1 M \to T^1 N$.
We leverage some ideas from Bourdon's construction to relate the function $b(t, v)$ in \eqref{eq:F_0} above to the cross-ratio, and then use an argument of Otal to in turn relate $b(t, v)$ to the ratio $\mathcal{L}_{g_0} \circ f_*/\mathcal{L}_g$.

We now recall Bourdon's construction (see \cite[1.4]{bourdon}). 
Let $\partial^3  \tilde M$ denote the set of triples $(\xi,\xi',\eta)$ of points in $\partial \tilde M$ which are pairwise distinct.
The first step of Bourdon's construction is to use the \emph{Gromov product} $(\xi \, | \, \xi')_p$ 
(defined in (\ref{eq:Gp}) above) to construct a fibration $\Pi: \partial^3 \tilde M \to T^1 \tilde M$ as follows:
Given $(\xi, \xi', \eta) \in \partial \tilde M$, let $p(\xi, \xi', \eta)$ denote the unique point on the geodesic determined by $\xi$ and $\xi'$ such that $(\xi \, | \, \eta)_p = (\xi' \, | \, \eta)_p$. 
%(In the case where $\tilde M$ has constant curvature $-1$, the above point $p$ coincides with the orthogonal projection of $\eta$ onto the geodesic $(\xi \, \xi')$ by (\ref{eq:GPangle}), which is reminiscent of the construction in \cite{gromov3rmks}).
Let $\Pi(\xi, \xi',\eta)$ denote the tangent vector to $[\xi',\xi]$ at the point $p(\xi, \xi', \eta)$.
Since the action of $\Gamma$ on $\tilde M \cup \partial \tilde M$ preserves the Gromov product, the map $\Pi$ is $\Gamma$-equivariant. 

A calculation (using the formula (\ref{eq:crGromov})) shows that 
\begin{equation}\label{eq:crtimechange}
d(p(\xi, \xi', \eta), p(\xi, \xi', \eta')) =[\xi, \xi', \eta, \eta']
\end{equation}
(see \cite[1.3]{bourdon}).
In particular, this shows that if $v = \Pi(\xi,\xi', \eta)$, then the fiber $\Pi^{-1}(v)$ consists of all triples $(\xi,\xi',\eta')$ where $\eta'$ satisfies the equation $[\xi, \xi', \eta, \eta']=0$.
Hence, if $\overline{f}: \partial \tilde M \to \partial \tilde N$ preserves the cross-ratio, it preserves the fibers of $\Pi$. 
As such, the natural homeomorphism $\overline{f}^3 : \partial^3 \tilde M \to \partial \tilde N$ induced by $\overline{f}$ factors through the projections $\Pi$ to a homeomorphism $ T^1 \tilde M \to T^1 \tilde N$. 
Since $\overline{f}$ and $\Pi$ are $\Gamma$-equivariant, the above homeomorphism further factors through to a homeomorphism $\mathcal{F}: T^1 M \to T^1 N$.
By construction, $\mathcal{F}$ sends $\phi^t$-orbits to $\psi^t$-orbits, and is thus of the form in (\ref{eq:oeq}). From (\ref{eq:crtimechange}) it is immediate that $a(t,v) =t$ for all $t \in \R$ and $v \in T^1 \tilde M$.

To obtain quantitative control of the Bourdon construction, we start by noting that the quasi-invariance of distance functions established in Proposition \ref{prop:contr-qi}
(together with the Morse lemma \cite[Theorem III.H.1.7]{bh13nonpos}) implies a quasi-invariance of the Gromov product (see, for instance, \cite[Chapter 5, Proposition 15]{gdlh}). This in turn implies the following bi-H{\"o}lder equivalence of Bourdon--Gromov distances (defined above \eqref{eq:crGromov}):
there exist constants $C_1, C_2$, depending only on $a, A, D$, so that
\begin{equation}\label{qinv-gp}
C_1 d_p(\xi, \eta)^{A} \leq d_{f(p)}(\overline f(\xi), \overline f(\eta)) \leq C_2 d_p(\xi, \eta)^{1/A}
\end{equation}
for every $p \in \tilde M$ and every $\xi, \eta \in \partial \tilde M$. Here, and throughout this section, $A$ is as in \eqref{eq:QI}, and can be further controlled in terms of geometric data as stated in Proposition \ref{prop:contr-A}. 
%The goal of the Theorem above is to replace the $1/A$ with something depending on length ratios. (that will be a corollary actually). 

In the following lemma, we show that the images of the two maps $\mathcal{F}_0 \circ \Pi_M$ and $\Pi_N \circ \overline{f}^3 \circ \Pi_M$ from $\partial^3 \tilde M \to T^1 \tilde N$ are a controlled distance apart.

\begin{lem}\label{lem:otal-compact}
Let $v =  \Pi (\xi_1, \xi_2, \eta)$. 
%Let $\mathcal{F}_0: T^1 M \to T^1 N$ be the orthogonal projection correspondence as in \eqref{}.
Then there exists a constant $C = C(a, A, D)$ so that for any $(\xi_1, \xi_2, \eta) \in T^1 M$, we have $d(\mathcal{F}_0(\Pi(\xi_1, \xi_2, \eta)), \Pi(\overline f(\xi_1), \overline f(\xi_2), \overline f(\eta))) \leq C$.  
\end{lem}

\begin{rem}
Since the maps in the statement of the lemma are all $\Gamma$-equivariant, the existence of some such bound $C$ follows from compactness of $T^1M$, but, as usual, we need to take greater care determining which geometric quantities this constant depends on. 
\end{rem}

\begin{proof}
Let $p$ denote the footpoint of $v$. 
By the Morse lemma (see \cite[Theorem III.H.1.7]{bh13nonpos}), there is a constant $C = C(A, D, a)$ so that the distance between $f(p)$ and $\mathcal{F}_0(v)$ is at most $C$.

Next, by definition of $\Pi$ we have $d_p(\xi_1, \eta) = d_p(\xi_2, \eta)$. The fact that $p$ lies on the geodesic $(\xi_1, \xi_2)$, together with the triangle inequality, gives
\[
1 = d_p(\xi_1, \xi_2) \leq d_p(\xi_1, \eta) + d_p(\xi_2, \eta) = 2 d_p(\xi_i, \eta).
\]
Hence $1/2 \leq d_p(\xi_1, \eta) = d_p(\xi_2, \eta) \leq 1$. 
By \eqref{qinv-gp}, there are constants $C_1, C_2$, depending only on $a, A, D$, so that
$C_1 \leq d_{f(p)}(\overline f(\xi_i), \overline f (\eta)) \leq C_2$ for $i = 1,2$. 
If $q$ is the footpoint of $\mathcal{F}_0(v)$ then the claim in the previous paragraph, together with \cite[equation (2)]{PPS}, gives
$C_1' \leq d_{q}(\overline f(\xi_i), \overline f( \eta)) \leq C_2'$, for $C_i'$ depending only on $a, A, D$. 

Now let $q(t)$ be a unit-speed parametrization of the geodesic $(\overline f(\xi_1), \overline f(\xi_2))$ so that $q(0) = q$. 
Recall that $\Pi(\overline f(\xi_1), \overline f(\xi_2), \overline f(\eta))$ is the point $q(t_0) \in \tilde N$ such that 
$d_{q(t_0)} (\overline f(\xi_1), \overline f(\eta)) = d_{q(t_0)} (\overline f(\xi_2), \overline f(\eta))$. To prove the lemma, we thus need to show $t_0 \leq C = C(A, D, a)$. To this end, we use the change of basepoint formula of the Gromov product \cite[1.1 b)]{bourdon} to rewrite the previous equality as 
\[
\frac{d_{q}(\overline f(\xi_1), \overline f(\eta))}{d_{q}(\overline f(\xi_2), \overline f (\eta))} = \exp \left(\frac{1}{2}(B_{\overline f(\xi_2)}(q, q(t_0)) - B_{\overline f(\xi_1)} (q, q(t_0))\right) =  e^{t_0}.
\]
Since the left-hand side is bounded between $C_1'/C_2'$ and $C_2'/C_1'$, this completes the proof. 
\end{proof}

\begin{prop}\label{prop:otal}
Suppose $\mathcal{L}_g (\gamma) \geq K \mathcal{L}_{g_0} (f(\gamma))$ for all $\gamma \in \Gamma$. Then there exists $C = C(a, b, A, D)$ such that for all $\xi, \xi', \eta, \eta' \in  \partial \tilde M$ we have
\[
[ \xi, \xi', \eta, \eta' ] \geq K [ \overline f(\xi), \overline f(\xi'), \overline f(\eta), \overline f(\eta')] + K C. 
\]
\end{prop}

\begin{rem}
The fact there exists some $C$ such that the above holds is proved by Otal in \cite[Proposition 4.2]{otalsymplectic}. We follow his proof, but we replace all instances of orthogonal projection of $\eta$ onto $(\xi, \xi')$ with Bourdon's projection $\Pi (\xi, \xi', \eta)$, which simplifies the proof and allows for better quantitative control. For instance, the role of \cite[Lemma 4.1]{otalsymplectic} in Otal's proof is played by \cite[Proposition 1.3]{bourdon} below. 
\end{rem}

\begin{proof}
Given $\xi, \xi', \eta, \eta'$, let $p$ and $p'$ denote the footpoints of $v = \Pi(\xi, \xi', \eta)$ and $v' = \Pi(\xi, \xi', \eta')$, respectively. 
By \cite[Proposition 1.3]{bourdon}, we have $ d(p, p') = [ \xi, \xi', \eta, \eta' ]$.
Let $\delta_0 > 0$ which will remain fixed throughout this proof. Then by Lemma \ref{lemma:spec-const}, there is a constant $S = S(\delta, a)$ and times $t_1, t_2 \in [0, S]$ so that the geodesic-flow--line joining $\phi^{-t_1} v$ and $\phi^{t_2} v$ is $\delta_0$-close to the lift of some closed geodesic $\gamma$. 
This means 
\[
 \mathcal{L}_g(\gamma) \leq d(p, p') + 2(S + \delta_0) = [ \xi, \xi', \eta, \eta' ] + 2(S + \delta_0). 
\]
%Now let $\mathcal{G}$ be the Gromov orbit equivalence (see \cite{gromov3rmks} and \cite[Section 5]{butt22}). 
By \cite[Proposition 2.4]{butt22finite}, we have that $\mathcal{F}_0$ is uniformly continuous with modulus of continuity depending explicitly on $A, D, a, b$. 
This means there exists some constant $S' = S'(S, a, A, D)$ such that $\mathcal{L}_{g_0}(f(\gamma)) \leq d(\mathcal{F}_0(v), \mathcal{F}_0(v')) + S_0'$. 
Finally, let $C$ be the constant from Lemma \ref{lem:otal-compact}. This lemma gives  
\[
d(\mathcal{F}_0(v), \mathcal{F}_0(v')) \leq 2C + d(\Pi(\overline f(\xi), \overline f(\xi'), \overline f(\eta), \Pi(\overline f(\xi), \overline f(\xi'), \overline f(\eta')).
\]
Since $d(\Pi(\overline f(\xi), \overline f(\xi'), \overline f(\eta), \Pi( \overline f(\xi), \overline f(\xi'), \overline f(\eta')) = [\overline f(\xi), \overline f(\xi'), \overline f(\eta), \overline f(\eta')]$, this completes the proof.
\end{proof}

\begin{lem}\label{lem:btv}
Let $b(t, v)$ as in \eqref{eq:F_0}; let $(M, g)$ and $(N, g_0)$ as in the hypotheses of Theorem \ref{thm:oe}. 
Then there is a constant $C = C(A, D, a, b)$ so that
\[ K_1( t - C ) \leq b(t, v) \leq K_2( t + C) \]
for all $v \in T^1 M$ and all $t \in \R$. 
\end{lem}

\begin{proof}
Let $v$ and $\phi^t v$ be given by $\Pi(\xi, \xi_2, \eta_1)$ and $\Pi(\xi_1, \xi_2, \eta_2)$. 
Then we have $t = [\xi_1, \xi_2, \eta_1, \eta_2]$. 
In Lemma \ref{lem:otal-compact} we showed there exists a constant $C$ such that for any $v = \Pi(\xi_1, \xi_2, \eta_1)$, we have $d(\mathcal{F}_0(v), \Pi(\overline f(\xi_1), \overline f(\xi_2), \overline f(\eta_1)) \leq C$. 
Hence 
\begin{align*}
b(t,v) &= d(\mathcal{F}_0(v), \mathcal{F}_0(\phi^t v)) \\ 
&\leq 2C + d(\Pi(\overline f(\xi_1), \overline f(\xi_2), \overline f(\eta_1)), \Pi(\overline f(\xi_1), \overline f(\xi_2), \overline f(\eta_2)) \tag{Lemma \ref{lem:otal-compact}} \\
&= 2C + [\overline f(\xi_1), \overline f(\xi_2), \overline f(\eta_1), \overline f(\eta_2) ] \tag{\cite[Proposition 1.3]{bourdon}} \\
&\leq 2C + K_2 [\xi_1, \xi_2, \eta_1, \eta_2] + C' \tag{Proposition \ref{prop:otal}} \\
&= 2C + (1 + \eps) t + C'.
\end{align*} 
The proof of the lower bound for $b(t, v)$ is analogous. 
\end{proof}

We now use an averaging trick of Gromov to turn $\mathcal{F}_0$ into an orbit equivalence $\mathcal{F}$ satisfying the conclusions of Theorem \ref{thm:oe}. 
Let $C$ be the constant from the previous lemma. 
Given $\delta > 0$, choose $l > 0$ so that $K_i C/l < \delta$ for $i = 1, 2$.  
For $t \geq 0$ and $v \in T^1 \tilde M$, consider the average 
\begin{equation}\label{eq:average}
a_l(t,v) := \frac{1}{l} \int_t^{t+l} b(s,v) \, ds.
\end{equation}
Then set $\mathcal{F}(v) = \psi^{a_l(0, v)} \mathcal{F}_0(v)$. 

\begin{proof}[Proof of Theorem \ref{thm:oe}, Part (\ref{thm:time-change})]
By the fundamental theorem of calculus, 
\[
\frac{d}{dt} a_l(t,v) = \frac{b(t+l,v) - b(t,v)}{l} = \frac{b(l,\phi^t v)}{l}.
\]
Hence, $a_l$ is injective so long as the right hand side above is never zero.
In our case, 
Lemma
\ref{lem:btv} shows the much stronger result that
$\frac{d}{dt} a_l(t,v) \in [K_1 - \delta, K_2 + \delta]$.
By \cite[Proposition 5.4]{butt22finite}, letting $\mathcal{F}(v) = \psi^{a_l(0, v)}{\mathcal F}_0(v)$ completes the proof. 
\end{proof}

\begin{proof}[Proof of Theorem \ref{thm:oe}, part (\ref{thm:hold-exp})]
We can replace the conclusion of \cite[Lemma 5.5]{butt22finite} with Lemma \ref{lem:btv} above. Since the constant $C$ in Lemma \ref{lem:btv} can be explicitly controlled in terms of $A, D, a, b$,  
the same arguments as in the remainder of \cite{butt22finite} go through verbatim to show $\mathcal{F}$ has the desired H{\"o}lder exponent $\alpha$ and H{\"o}lder constant $C$, which completes the proof.  
\end{proof}

\subsection{Proof of the volume estimate}

Now we will explicitly relate the Liouville current $\lambda$ on $\partial^2 \tilde M$ and the Liouville measure $\mu$ on $T^1 \tilde M$.
Let $X$ denote the vector field on $T^1 M$ which generates the geodesic flow.
For every $v \in T^1 M$, we can choose local coordinates $(t, x_1, \dots, x_m)$ near $v$ so that $\partial / \partial t = X$. Then $(0, x_1, \dots, x_m)$ defines a local smooth hypersurface $K_0 \subset T^1 M$ which is transverse to $X$. 
Let $K = \pi(K_0) \subset \partial^2 \tilde M$. 
Then $\int_{K_0} (d \omega )^{n-1} = \lambda(K)$. 

For $T > 0$ define 
$
 K_T = \{ \phi^t v \, | \, v \in K_0, \, t \in [-T/2,T/2] \}.
$
If $T$ is sufficiently small, then with respect to our choice of local coordinates, we have $K_T = \{ (t, x_1, \dots, x_m) \, | \, -T/2 \leq t \leq T/2 \}$ and $\omega = dt$. We thus obtain
 \begin{equation*}\label{productformula}
 \mu(K_T) = \int_{K_T} \omega \wedge (d \omega)^{n-1}  = T \int_{K_0} (d \omega )^{n-1} = T \lambda(K).
 \end{equation*}
 
Now let $K_0 = K(v, r)_0 := E_v(Q(r) \times Q(r)) \subset T^1 M$.
Then by Proposition \ref{prop:LC}, we have
 \begin{equation*}\label{currentratio}
(1 - C \eps^{\alpha})  \lambda^M (K_0) \leq  \lambda^N (\overline{f}(K_0)) \leq (1 + C \eps^{\alpha}) \lambda^M (K_0).
\end{equation*}
Now let $\mathcal{F}$ as in Theorem \ref{thm:oe}. The discussion above then implies
\begin{equation}\label{eq:fbest}
(1 - C \eps^{\alpha}) (1 - 2 \eps)  \mu^M (K_r) 
\leq  \lambda^N (\mathcal{F}(K_r)) \leq 
(1 + C \eps^{\alpha}) (1 + 2 \eps)  \mu^M (K_r).
\end{equation}
%%%%%%%%%%%%%%%%%%%%%%%%%%%%%%%%%%%%%%%%%%%%%%

To complete the proof of Theorem \ref{thm:volest}, we will estimate  the volumes of $T^1 M$ and $T^1 N$ by approximating them with flow boxes $K_r$ as above. Since we cannot make $r$ arbitrarily small ($r = C \eps^{\alpha}$, and $\eps$ is fixed), we proceed to quantify the error of such an approximation.

\begin{lem}\label{lem:EvSas}
Let $\delta_0 = \delta_0(\eps)$ as in Proposition \ref{crmain}, let $r = 2 \delta_0$, and let $\eps$ sufficiently small such that the error term $e(r)$ in Lemma \ref{lem:anglelem} satisfies $\Vert e(r) \Vert/r < 1/2$. 
Let $K(v, r)_0 = E_v(Q(r) \times Q(r))$. Then there is a constant $c$, depending only on $b$, such that for any $v \in T^1 M$, we have $K(v,r)_r$ contains the Sasaki ball $B(v, cr)$.
\end{lem}

\begin{proof}
By construction, $K(v, r)_0 \supset W^{ss}_r(v) := B(v,r) \cap W^{ss}(v)$.
By Lemma \ref{lem:anglelem} and our choice of $\eps$, for each $w \in W^{ss}_r(v)$, there is a neighborhood of size $r/2$ of the vertical leaf $\mathcal{V}(w)$ contained in $K^{v}_0$.
In other words,
$K^{v}_0 \supset \cup_{w \in W^{ss}_r(v)} \mathcal{V}_{r/2}(w)$.

Set $c = (2(1+b))^{-1}$. 
First we consider $u \in B(v,cr)$ such that $Pu \in PW^{ss}(v)$. 
%By definition of the Sasaki metric, this means that the distance between the footpoints of $u$ and $v$ is less than $cr$.
Let $\overline{u}$ be the vector with the same footpoint as $u$ such that $\overline{u} \in W^{ss}(v)$. 
Then $d(u, \overline{u}) \leq cr$.
Moreover, by \cite[Lemma 3.9]{butt22finite}, it follows that $\overline{u} \in W^{ss}_{r} (v)$.

Now suppose that $u \in B(v, cr)$. Let $u_0$ be the element in $\{ \phi^t u \}_{t \in \R}$ which is closest to $v$. 
Then the distances between the footpoints of $u$, $u_0$ and $v$ are all at most $cr$. 
Let $u_1 = \phi^t u_0$ be such that the footpoint of $u_1$ is on $P W^{ss}(v)$. 
By \cite[Lemma 3.9]{butt22finite}, we see that $t_1 \leq Cr^2$. Hence $u_1 \in B(v, cr)$ and the argument in the previous paragraph shows $u_1 \in K(v, r)_0$, which completes the proof. 
\end{proof}

\begin{lem}
Let $r > 0$ sufficiently small as in the previous lemma. 
Then there exists a constant $C = C (n, b)$ and an integer $m \leq {\rm vol}(T^1 M) C r^{-(2n+1)}$, together with $v_1, \dots, v_m \in T^1 M$, so that $T^1 M = \cup_{i=1}^m K(v_i, r)_r$.
\end{lem}

\begin{proof}
We will argue similarly to the proof of \cite[Lemma 2.1]{butt22finite}.
Let $v_1, \dots, v_m$ be a maximal $c r$-separated subset of $T^1 M$ with respect to the Sasaki metric. Then the $B(v_i, c r)$ cover $T^1 M$, and by the previous lemma, so do the $K^{v_i}_r$.
To estimate $m$, we note that the smaller balls $B(v_i, c r/2)$ are disjoint, and hence
\[
m \inf_{v \in T^1 M} {\rm vol}(B(v, c r/2)) \leq {\rm vol} (T^1 M).
\]
By \cite[Lemma 4.1]{butt22finite}, we have ${\rm vol}(B(v, c r/2) \geq c' (r/2)^{n+1}$ for some $c' = c'(c, n)$. This means $m \leq C {\rm vol}(T^1 M) r^{-(2n+1)}$, which completes the proof.
\end{proof}

\begin{prop}\label{cubeerror}
Let $r > 0$ as in the previous lemma. There exists a constant $C$, depending only on $n$, $a$, $b$, $R$, such that the following hold:
\begin{enumerate}
\item There exist $v_1, \dots, v_m$ so that the flow boxes $K^i := K(v_i, r)_r$ are disjoint and so that 
\[ \sum_{i=1}^m \mu^M (K^i) \geq (1 - Cr) \mu^M (T^1 M); \]
\item There exists $v_1, \dots, v_m$ so that the flow boxes $K^i := K(v_i, r)_r$ cover $T^1 M$ and so that 
\[
\sum_{i=1}^m \mu^M (K^i) \leq (1 - Cr) \mu^M (T^1 M).
\]
\end{enumerate}
\end{prop}

\begin{proof}
First we choose $r_0'$ sufficiently small so that each $K(v, r_0')_{r_0'}$ has diameter less than the injectivity radius of $T^1 M$ with respect to the Sasaki metric. 
Indeed, by Lemmas \ref{lemma:bddS} and \ref{lemma:cstTHETA}, 
we have ${\rm diam}(K(v,r_0')_{r_0'}) \leq C r_0'$ for some $C = C(a,b,R)$. 
%So $r_0 \leq c i_M$. 

Set $n' = 2n+1 = \dim(T^1 M)$. 
By the previous lemma, we need $m \leq C {\rm vol}(T^1 M) r_0'^{-n'}$ sets of the form $K(v_i, r_0')_{r_0'}$ to cover $T^1 M$.
By the proof of Proposition \ref{volEv}, each $K(v_i, r_0')_{r_0'}$ is contained in a ``parallelogram" we will refer to as $P(v_i, r_0)$, where $r_0 = (1 + Cr_0'^{\alpha_0})r_0'$.  
Next, choose $k \in \N$ so that $r_0/k < r$. 
Subdivide each $P(v_i, r_0)$ into $k^{n'}$ ``small parallelograms" of the form $P(v_i^j, r_0/k)$.
Now delete all small ``boundary" parallelograms, ie, all $P(v_i^j, r_0/k)$ which are adjacent to the boundary of $P(v_i, r_0)$.
Call this deleted region $B \subset T^1 M$.
Using that $m \leq C {\rm vol}(T^1 M) r_0^{-n'}$, we see that 
\[
\mu(B) \leq \frac{k^{n'} - (k-2)^{n'}}{k^{n'}} r_0^{n'} C {\rm vol}(T^1 M)  r_0^{-n'}
\leq \frac{C' r_0^{-1} r_0}{k} {\rm vol}(T^1 M)
\leq C'' r {\rm vol}(T^1 M),
\]
where $C''$ depends only $n, a, b, R, i_M$.

We claim that $T^1 M \setminus B$ can be disjointly covered by a subset of the remaining ``small" parallelograms.
Suppose an interior parallelogram $P(v_1^j, r_0/k)$ intersects a parallelogram $P(v_2^l, r_0/k)$.
Since $P(v_1^j, r_0/k)$ is not a boundary parallelogram, we have that  $P(v_2^l, r_0/k)$ is entirely covered by (interior and boundary) parallelograms of the form $P(v_1^{j'}, r_0/k)$.
As such, deleting $P(v_2^l, r_0/k)$ does not expose any part of $T^1 M \setminus B$. 
Continuing to delete overlapping cubes in this manner proves the claim. 
Finally, to prove (1), we use the proof of Proposition \ref{volEv} to find sets of the form $K(v_i^j, r)_r$ inside each of the  $P(v_i^j, r)_r$ such that $\mu^M(K(v_i^j, r)_r/ P(v_i^j, r)_r \leq 1 - Cr^{\alpha_0}$ for some $C = C(n, a, b, R)$. 

By the above deletion argument, we can also cover $T^1M$ with small parallelograms such that any overlapping parallelograms intersect the boundary region $B$.
This proves (2).  
\end{proof}

\begin{proof}[Proof of Theorem \ref{thm:volest}]
Let $K^1, \dots, K^m$ be disjoint flow boxes satisfying part (1) of the Proposition \ref{cubeerror}. 
Let $\mathcal{F}$ as in Theorem \ref{thm:oe}. 
Since $\mathcal{F}$ is injective, the $\mathcal{F}(K^i)$ are disjoint as well, which, using (\ref{eq:fbest}), 
gives
\[
{\mu^N}(T^1 N) \geq \mu^N (\cup_i \mathcal{F}(K^i)) 
=  \sum_i \mu^N (\mathcal{F}(K^i)) 
\geq (1 - \eps') \sum_i \mu^M(K^i) 
\geq (1 - C \delta_0)(1 - \eps') \mu^M (T^1M),
\]
where $\eps'$ is of the form %{\color{purple} 
$C \eps^{\alpha}$ for $\alpha < \alpha_0^2 (1 - \eps) a^2/b$,
%} 
as desired. 

Next, we take $K^1, \dots, K^m$ as in (2) of the previous lemma. 
Then, surjectivity of $\mathcal{F}$ gives
\[
\mu^N(T^1 N) = \mu^N(\cup_i \mathcal{F}(K^i)) 
\leq \sum_i \mu^N( \mathcal{F}(c_i) )
\leq (1 + \eps') \sum_i \mu^M (K^i) \leq (1 + \eps')(1 + C \delta_0) \mu^M (T^1 M),
\]
which completes the proof.
\end{proof}

\section{Estimates for the BCG map} \label{BCGsection}

If $\mathcal{L}_g = \mathcal{L}_{g_0}$, then it follows from \cite[Theorem A]{ham99symplectic} that ${\rm Vol} (M, g) = {\rm Vol} (N, g_0)$. 
Since $\mathcal{L}_g$ determines the topological entropy of the geodesic flow, the entropy rigidity theorem of Besson-Courtois-Gallot \cite{BCGnegcurv} states there is an isometry $F: M \to N$.  

In the case where $1 - \eps \leq \frac{\mathcal{L}_{g_0}}{\mathcal{L}_g} \leq 1 + \eps$ (equation (\ref{MLSass})), 
Theorem \ref{thm:volest} states the volumes of $M$ and $N$ satisfy
$1 - C \eps^{\alpha} \leq \frac{{\rm Vol}(N)}{{\rm Vol}(M)} \leq 1 + C \eps^{\alpha}$, where $C$ and $\alpha$ are positive constants depending only on %{\color{purple} 
$n, \Gamma, a, b, i_0, D, R$.
%}  
Moreover, the entropies are related as follows.

\begin{lem}\label{entropy}
Let $h$ denote the topological entropy of the geodesic flow. Then with the above marked length spectrum assumptions we have
\begin{equation}
\frac{1}{1 + \eps} h(g) \leq h(g_0) \leq \frac{1}{1 - \eps} h(g).
\end{equation}
\end{lem}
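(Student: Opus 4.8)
The plan is to relate the topological entropy of the geodesic flow to growth rates of closed orbits, and then transfer the marked length spectrum comparison (\ref{MLSass}) directly to this quantity. Recall the classical fact (Margulis, or the variational principle together with the counting of closed geodesics) that for a negatively curved closed manifold the topological entropy $h$ of the geodesic flow equals the exponential growth rate of closed geodesics:
\[ h = \lim_{R \to \infty} \frac{1}{R} \log \#\{ \gamma \in \Gamma \text{ conjugacy class} : \mathcal{L}(\gamma) \leq R \}. \]
Equivalently, $h$ is the critical exponent of the Poincar\'e series $\sum_{\gamma} e^{-s \mathcal{L}(\gamma)}$, i.e.\ the infimum of $s > 0$ for which this series converges.

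First I would fix the identification of conjugacy classes in $\Gamma$ induced by $f_*$ (an isomorphism, since $f$ is a homotopy equivalence), so that the two counting functions are indexed by the same set. Then, using (\ref{MLSass}), namely $(1-\eps)\mathcal{L}_g(\gamma) \leq \mathcal{L}_{g_0}(f_*\gamma) \leq (1+\eps)\mathcal{L}_g(\gamma)$, I would compare the two counting functions: every conjugacy class with $\mathcal{L}_{g_0}(f_*\gamma) \leq R$ satisfies $\mathcal{L}_g(\gamma) \leq R/(1-\eps)$, and conversely every class with $\mathcal{L}_g(\gamma) \leq R/(1+\eps)$ satisfies $\mathcal{L}_{g_0}(f_*\gamma) \leq R$. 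Hence
\[ N_g\!\left( \tfrac{R}{1+\eps} \right) \leq N_{g_0}(R) \leq N_g\!\left( \tfrac{R}{1-\eps} \right), \]
where $N_g, N_{g_0}$ denote the respective counting functions. Taking $\frac{1}{R}\log$ and letting $R \to \infty$, the outer terms converge to $\frac{1}{1+\eps}h(g)$ and $\frac{1}{1-\eps}h(g)$ respectively (rescaling the argument of the limit by a constant rescales the exponential growth rate by that constant), which yields exactly $\frac{1}{1+\eps}h(g) \leq h(g_0) \leq \frac{1}{1-\eps}h(g)$.

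The main thing to be careful about — though it is not really an obstacle, since it is standard in this setting — is invoking the correct version of the Margulis-type asymptotic so that $h$ genuinely equals the exponential orbit growth rate; this holds for the geodesic flow of any closed negatively curved manifold, and in particular applies to both $(M,g)$ and the locally symmetric $(N,g_0)$. Alternatively, one can bypass the precise asymptotics and argue purely with the Poincar\'e series: the critical exponent $\delta_g$ of $\sum_\gamma e^{-s\mathcal{L}_g(\gamma)}$ equals $h(g)$, and the inequality $(1-\eps)\mathcal{L}_g \leq \mathcal{L}_{g_0}\circ f_* \leq (1+\eps)\mathcal{L}_g$ immediately gives, for $s > 0$, convergence of $\sum e^{-s\mathcal{L}_{g_0}(f_*\gamma)}$ whenever $\sum e^{-s(1-\eps)\mathcal{L}_g(\gamma)}$ converges and divergence whenever $\sum e^{-s(1+\eps)\mathcal{L}_g(\gamma)}$ diverges, so $\frac{\delta_g}{1+\eps} \leq \delta_{g_0} \leq \frac{\delta_g}{1-\eps}$, which is the claim.
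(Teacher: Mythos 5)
Your proposal is correct and follows essentially the same approach as the paper: the paper's proof also invokes Margulis's characterization of topological entropy as the exponential growth rate of the counting function $P_g(t) = \#\{\gamma : l_g(\gamma) \leq t\}$ and then uses the marked length spectrum inequality to compare counting functions. You have simply spelled out the counting-function comparison and the $\tfrac{1}{R}\log$ limit argument in more detail than the paper does, and the Poincar\'e series alternative you sketch is an equivalent reformulation of the same idea.
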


%\begin{proof}
%If $l_{g_{\eps}}(\gamma) \leq t$ then $l_{g}(\gamma) \leq (1 + \eps) l_{g_{\eps}}(\gamma) \leq (1 + \eps) t.$
%Hence $P_{g_{\eps}}(t) \leq P_{g}( (1 + \eps)t)$. This means
%$$\frac{\log P_{g_{\eps}}(t) }{t} \leq \frac{(1 + \eps) \log(P_{g}((1+\eps))t)}{(1+\eps)t}.$$
%Using equation \ref{mar}, we get $h(g_{\eps}) \leq (1 + \eps) h(g)$ after taking $t \to \infty$. 
%Similarly, if $l_{g}(\gamma) \leq t$ then $l_{g_{\eps}}(\gamma) \leq \frac{t}{1 - \eps}$. This gives $P_{g} (t) \leq P_{g_{\eps}}(t/(1 - \eps))$. It is then easy to see that $h(g) \leq \frac{1}{1 - \eps} h(g_{\eps})$.   
%\end{proof}

\begin{proof}
This follows from the following description of the topological entropy in terms of periodic orbits due to Margulis \cite{margulis1969}:
\begin{equation}\label{mar}
h(g) = \lim_{t \to \infty} \frac{1}{t} \log P_g (t),
\end{equation}
where $P_g(t) = \# \{ \gamma \, | \, l_g(\gamma) \leq t \}.$  
\end{proof}

We use the results of Theorem \ref{thm:volest} and Lemma \ref{entropy} to modify the proof in \cite{BCGnegcurv} that there is an isometry $F: M \to N$. 
More specifically, we use the same construction for the map $F$ as in \cite{BCGnegcurv} and show the matrix of $dF_p$ with respect to suitable orthonormal bases is close to the identity matrix.

\subsection{Construction of the BCG map} 

From now on, we will assume $N$ is a locally symmetric space. This means $\tilde N$ is either a real, complex or quaternionic hyperbolic space or the Cayley hyperbolic space of real dimension 16; let $d = 1, 2, 4$ or $8$, respectively.

We now normalize $g_0$ so that the sectional curvatures are all $-1$ in the case $d = 1$ and contained in the interval $[-4, -1]$ otherwise. The requisite scaling factor depends only on $-a^2$ and $-b^2$, the original sectional curvature bounds for $g_0$. 
We also rescale $g$ by the same factor so that (\ref{MLSass}) still holds.
 For notational convenience, we will continue to refer to the sectional curvature bounds of the renormalized $(M, g)$ as $-b^2$ and $-a^2$.
%this doesn't affect the dependencies of the constants in the statements of the main theorem
We also note that since $\dim N \geq 3$, Mostow rigidity implies $(N, g_0)$ is determined up to isometry by its fundamental group $\Gamma$ \cite{mostow}. Thus, from now on, any constants arising from the geometry of $N$, such as the diameter  and the injectivity radius, can be thought of as depending only on $\Gamma$.

We first recall the construction of the map $F: M \to N$ in \cite{BCGnegcurv}. We then summarize the proof that $F$ is an isometry in the case of equal entropies and volumes, before explaining how to modify it for approximately equal entropies and volumes.

Given $p \in M$, let $\mu_p$ be the Patterson-Sullivan measure on $\partial \tilde M$. 
Let $\overline f: \partial \tilde M \to \partial \tilde N$ as before (see Construction \ref{bdrymap}). 
Define $F(p) = {\rm bar} (f_* \mu_p)$, where ${\rm bar}$ denotes the barycenter map (see \cite{BCGnegcurv} for more details). 
We call $F$ the \textit{BCG map}. 
By the definition of the barycenter, the BCG map has the implicit description
\begin{equation} \label{defnF}
\int_{\partial \tilde N} dB_{F(p), \xi} ( \cdot) d(\overline f_* \mu_p)(\xi) = 0,
\end{equation}
where $\xi \in \partial \tilde N$ and $B_{F(p), \xi}$ is the Busemann function on $(\tilde N, g_0)$. 
By the implicit function theorem, the BCG map $F$ is $C^1$ (actually, $C^2$ since Busemann functions on $\tilde M$ are $C^2$ \cite[Proposition IV.3.2]{ballmann}), and its derivative $dF_p$ satisfies
\begin{equation}\label{impder}
\int_{\partial \tilde N} {\rm Hess} B^N_{F(p), \xi} (dF_p (v), u) \, d (\overline f_* \mu_p)(\xi) = h(g) \int_{\partial \tilde N} d B^N_{F(p), \xi} (u) dB^M_{p, \overline f^{-1} (\xi)} (v) d (\overline f_* \mu_p)(\xi)
\end{equation}
for all $v \in T_p M$ and $u \in T_{F(p)} N$ \cite[(5.2)]{BCGnegcurv}.
In light of this, it is natural to define the following quadratic forms $K$ and $H$:
\begin{align}
\langle K_{F(p)} u, u \rangle &:= \int_{\partial \tilde N} ( {\rm Hess} B_{F(p), \xi}) (u) \, d(\overline f_* \mu_p)(\xi), \label{K} \\
\langle H_{F(p)} u, u \rangle &:= \int_{\partial \tilde N} (dB_{F(p), \xi} (u))^2 \, d(\overline f_* \mu_p)(\xi) \label{H}, 
\end{align}
where $\langle \cdot, \cdot \rangle$ denotes the Riemannian inner product coming from $g_0$ \cite[p. 636]{BCGnegcurv}.

Without any assumptions about the volumes or entropies, the following three inequalities hold; see \cite{yupingoct} for the Cayley case.

\begin{lem}\cite[Lemma 5.4]{BCGnegcurv} \label{jacbda}
\[ |{\rm Jac} F (p)| \leq \frac{h^n(g)}{n^{n/2}} \frac{\det(H)^{1/2}}{\det(K)}. \]
\end{lem}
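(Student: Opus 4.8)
The plan is to read a factorization of the differential $dF_p$ off the implicit formula (\ref{impder}), pass to determinants, and then estimate the resulting quantity by combining a Cauchy--Schwarz inequality with the arithmetic--geometric mean inequality; this is the argument of \cite[Lemma 5.4]{BCGnegcurv}, which I outline. First I would introduce the linear map $A_p \colon T_pM \to T_{F(p)}N$ determined by
\[ \langle A_p v, u\rangle = \int_{\partial \tilde N} dB^N_{F(p),\xi}(u)\, dB^M_{p,\overline f^{-1}(\xi)}(v)\, d(\overline f_*\mu_p)(\xi). \]
Polarizing the definition (\ref{K}) of $K_{F(p)}$ identifies the left-hand side of (\ref{impder}) with $\langle K_{F(p)}\, dF_p(v), u\rangle$, so (\ref{impder}) reads $K_{F(p)} \circ dF_p = h(g)\, A_p$. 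Since $K_{F(p)}$ is symmetric and positive definite (its quadratic form in (\ref{K}) is nonnegative, and cannot vanish on a nonzero vector by a standard argument using that $\overline f_*\mu_p$ has full support on $\partial\tilde N$ and that Busemann functions are strictly convex transverse to their gradient in negative curvature), passing to determinants in $g$- and $g_0$-orthonormal bases gives $|{\rm Jac}\, F(p)|\,\det K_{F(p)} = h(g)^n\,|\det A_p|$.

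The next step is to bound $|\det A_p|$. I would introduce the positive definite quadratic form $\widetilde H_p$ on $T_pM$ with $\langle \widetilde H_p v, v\rangle = \int_{\partial\tilde N}\bigl(dB^M_{p,\overline f^{-1}(\xi)}(v)\bigr)^2\, d(\overline f_*\mu_p)(\xi)$, which after pushing the integral back along $\overline f$ equals $\int_{\partial\tilde M}\bigl(dB^M_{p,\eta}(v)\bigr)^2\, d\mu_p(\eta)$. Cauchy--Schwarz in $L^2(\overline f_*\mu_p)$ applied to the integrand defining $\langle A_pv,u\rangle$ gives $|\langle A_pv,u\rangle| \le \langle H_{F(p)}u,u\rangle^{1/2}\,\langle\widetilde H_pv,v\rangle^{1/2}$ for all $u,v$; equivalently, $H_{F(p)}^{-1/2}\circ A_p\circ\widetilde H_p^{-1/2}$ has operator norm at most $1$, hence determinant of absolute value at most $1$, so $|\det A_p| \le \det(H_{F(p)})^{1/2}\,\det(\widetilde H_p)^{1/2}$. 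Finally, because Busemann functions on $\tilde M$ have unit gradient and $\mu_p$ is a probability measure, $\operatorname{tr}\widetilde H_p = \int_{\partial\tilde M}\|dB^M_{p,\eta}\|_g^2\, d\mu_p(\eta) = 1$, so the arithmetic--geometric mean inequality gives $\det(\widetilde H_p) \le (\operatorname{tr}\widetilde H_p/n)^n = n^{-n}$. Combining the three displayed bounds yields $|{\rm Jac}\, F(p)| \le \dfrac{h(g)^n}{n^{n/2}}\,\dfrac{\det(H_{F(p)})^{1/2}}{\det K_{F(p)}}$, as claimed.

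No serious analytic difficulty arises: the inequalities are elementary and the integral manipulations are routine changes of variables. The step needing attention is the determinant bookkeeping --- keeping track of which maps act between $T_pM$ and $T_{F(p)}N$, checking the positive-definiteness that makes $H_{F(p)}^{-1/2}$, $\widetilde H_p^{-1/2}$, and $K_{F(p)}^{-1}$ meaningful (using full support of the Patterson--Sullivan measures and convexity of Busemann functions in negative curvature), and recognizing that the normalization $\operatorname{tr}\widetilde H_p = 1$ is precisely what produces the factor $n^{-n/2}$.
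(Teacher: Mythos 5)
Your reconstruction is correct, and it reproduces the argument of \cite[Lemma 5.4]{BCGnegcurv} that the paper cites without reproving: the factorization $K_{F(p)}\circ dF_p = h(g)\,A_p$ from the implicit equation, the Cauchy--Schwarz bound $|\det A_p| \le \det(H)^{1/2}\det(\widetilde H)^{1/2}$ via the operator $H^{-1/2}A_p\widetilde H^{-1/2}$, and the normalization $\operatorname{tr}\widetilde H_p = 1$ (since Busemann gradients are unit and $\mu_p$ is a probability measure) feeding into AM--GM to produce the $n^{-n/2}$. The determinant bookkeeping and the positive-definiteness checks you flag are exactly where the care is needed, and you have them right.
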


\begin{lem}\cite[Lemma B3]{BCGGAFA}\label{jacbdb}
Let $n \geq 3$ and let $H$ and $K$ be the $n \times n$ positive definite symmetric matrices coming from the operators in (\ref{K}) and (\ref{H}), respectively. Then
\[ \frac{\det H}{\det(K)^2} \leq \frac{ (n-1)^{ \frac{2n(n-1)}{n+d-2} } }{ (n+d-2)^{2n} } \frac{ \det(H)^{ \frac{n-d}{n+d-2} } }{ \det(I-H)^{ \frac{2(n-1)}{n+d-2} }  }, \]
with equality if and only if $H = \frac{1}{n} I$.  
\end{lem}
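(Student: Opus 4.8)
The plan is to reduce the inequality to an optimization problem over the eigenvalues of $H$, exploiting the two key facts that tie $H$ and $K$ together: first, the algebraic constraint $\mathrm{tr}\, H = 1$ (which follows from the definition in (\ref{H}) together with $\|dB_{F(p),\xi}\| = 1$ and $\mu_p$ being a probability measure after normalization), and second, a pointwise operator inequality relating $K$ to $H$ coming from the explicit form of the Hessian of the Busemann function on a rank-one symmetric space. On $\tilde N$ the Busemann function satisfies $\mathrm{Hess}\, B_{F(p),\xi}(u) = \|u\|^2 + (\text{terms controlled by the complex/quaternionic structure})$ in the $d=1$ case reducing to $\mathrm{Hess}\, B = \langle(I - E_\xi)u,u\rangle$ where $E_\xi = dB\otimes dB$; integrating gives $K \geq I - H$ in the $d=1$ case, and in general $K \geq \frac{1}{?}\big((n+d-2)(I-H) + \text{correction}\big)$ up to the normalization of curvature in $[-4,-1]$. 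The precise inequality I would use is $K \succeq \frac{1}{n+d-2}\big((n-1)I - (n+d-2) \cdot \text{something} \big)$; the exact constants $(n-1)$ and $(n+d-2)$ appearing in the statement are exactly what one gets by diagonalizing the Hessian of the Busemann function in an orthonormal frame adapted to the horospheres of $\tilde N$, using that the principal curvatures of horospheres in the normalized rank-one space are $1$ (with multiplicity $n-d$) and $2$ (with multiplicity $d-1$) plus the radial direction.

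Concretely, I would first diagonalize $H$ with eigenvalues $\lambda_1,\dots,\lambda_n \in (0,1)$ satisfying $\sum_i \lambda_i = 1$. Next I would establish the operator inequality $K \succeq \frac{n+d-2}{n-1}\,(I - H)$ by computing $\mathrm{Hess}\, B^N_{F(p),\xi}$ explicitly and using Cauchy–Schwarz (or Jensen) inside the integral defining $K$; this is the step that produces the constants $(n-1)$ and $(n+d-2)$. Since both sides are positive definite and $I-H$ is diagonal in the chosen basis, this gives $\det K \geq \big(\frac{n+d-2}{n-1}\big)^n \prod_i (1-\lambda_i)$, hence
\[
\frac{\det H}{(\det K)^2} \leq \Big(\frac{n-1}{n+d-2}\Big)^{2n} \frac{\prod_i \lambda_i}{\prod_i (1-\lambda_i)^2}.
\]
Then the remaining task is the scalar inequality: under $\sum \lambda_i = 1$, $\lambda_i \in (0,1)$,
\[
\frac{\prod_i \lambda_i}{\prod_i (1-\lambda_i)^2} \leq \frac{(n-1)^{\frac{2n(n-1)}{n+d-2}}}{(n-1)^{2n}}\cdot\frac{\big(\prod_i\lambda_i\big)^{\frac{n-d}{n+d-2}}}{\big(\det(I-H)\big)^{\frac{2(n-1)}{n+d-2}}},
\]
which after cancellation reduces to bounding $\big(\prod_i \lambda_i\big)^{1 - \frac{n-d}{n+d-2}}$ against a power of $\det(I-H)$ with the correct exponent; here one uses AM–GM on $\prod(1-\lambda_i)$ given $\sum(1-\lambda_i) = n-1$, with equality exactly when all $\lambda_i = 1/n$. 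I would package the two pieces and check that equality propagates through both the operator inequality (forcing the measure $\overline f_*\mu_p$ to be suitably symmetric) and the AM–GM step (forcing $H = \frac1n I$) to get the equality characterization.

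The main obstacle is the second step: pinning down the precise operator inequality between $K$ and $I-H$ with the sharp constants $(n-1)$ and $(n+d-2)$. This requires a careful computation of the Hessian of the Busemann function on each of the three families of rank-one symmetric spaces (and the Cayley plane), keeping track of the horosphere principal curvatures and their multiplicities under the chosen curvature normalization, and then a sharp application of the Cauchy–Schwarz/Jensen inequality inside the integral that defines $K$ — sharpness there is what dictates the form of the extremal measure. Once that inequality is in hand with the right constants, the rest is the commutative-algebra/AM–GM bookkeeping, which is routine. (This is essentially the content of \cite[Appendix B]{BCGGAFA}; I would follow that argument, adapting the Cayley case as in \cite{yupingoct}.)
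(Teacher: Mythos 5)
This lemma is cited from \cite[Lemma~B3]{BCGGAFA}; the paper does not prove it, so your attempt should be compared against the proof in BCG's appendix.

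Your reduction hinges on the operator inequality $K \succeq \frac{n+d-2}{n-1}(I-H)$, and this is false for $d\geq 2$. Take $H$ close to a rank-one projection $vv^{T}$ (with $\mathrm{tr}\,H=1$ and $H$ positive definite); then $K = I - H - \sum_{k=1}^{d-1}J_kHJ_k \approx I - vv^{T} + \sum_k (J_kv)(J_kv)^{T}$. For any unit vector $w$ orthogonal to $v$ and all the $J_kv$ (such $w$ exists since $n > d$), one gets $\langle Kw,w\rangle \approx 1$ while $\frac{n+d-2}{n-1}\langle (I-H)w,w\rangle \approx \frac{n+d-2}{n-1} > 1$. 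So the pointwise dominance fails, and with it the step $\det K \geq \left(\frac{n+d-2}{n-1}\right)^n\det(I-H)$. The single operator inequality is too rigid to carry the anisotropic information encoded in the $J_k$; the correction term $\sum_k(-J_kHJ_k)$ is large only in the directions $J_kv$, not uniformly.

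The argument in \cite{BCGGAFA} uses the exact algebraic identity $K = (I-H) + \sum_{k=1}^{d-1}(-J_kHJ_k)$ (with a separate treatment for the Cayley case, as in \cite{yupingoct}), and then applies the \emph{Minkowski determinant inequality} $\det(A_1+\cdots+A_m)^{1/n} \geq \sum_i \det(A_i)^{1/n}$ for positive semidefinite summands. Since each $-J_kHJ_k$ is similar to $H$ (so $\det(-J_kHJ_k) = \det H$), this gives
\[
(\det K)^{1/n} \;\geq\; \det(I-H)^{1/n} + (d-1)\,\det(H)^{1/n}.
\]
One then applies weighted AM--GM with weights $n-1$ and $d-1$:
\[
\det(I-H)^{1/n} + (d-1)\det(H)^{1/n}
\;\geq\; (n+d-2)\left(\frac{\det(I-H)^{1/n}}{n-1}\right)^{\frac{n-1}{n+d-2}}\!\big(\det(H)^{1/n}\big)^{\frac{d-1}{n+d-2}}.
\]
Raising to the $2n$-th power and dividing $\det H$ by both sides produces exactly the stated inequality, including the constants; the equality case comes from proportionality in Minkowski and equality in AM--GM, which together force $H = \frac{1}{n}I$. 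Note that this route bypasses your final scalar inequality $\frac{\det H}{\det(I-H)} \leq (n-1)^{-n}$ entirely: once Minkowski is applied, a single AM--GM finishes the proof, and no separate application of Lemma~\ref{brain} is needed at this stage (that lemma is used subsequently, with $\alpha = \frac{2(n-1)}{n-d}$, to pass from Lemma~\ref{jacbdb} to Lemma~\ref{jacbd}).
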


\begin{lem}\cite[Lemma B4]{BCGGAFA} \label{brain} 
Let $H$ be an $n \times n$ positive definite symmetric matrix with trace 1, where $n \geq 3$. Let $1 < \alpha \leq n-1$. 
Then 
$$\frac{\det H}{\det(I - H)^{\alpha}} \leq \left( \frac{n^{\alpha}}{n(n -1)^{\alpha}} \right)^n. $$
Moreover, equality holds if and only if $H = \frac{1}{n} I$. 
\end{lem}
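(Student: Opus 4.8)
The plan is to reduce the matrix inequality to a one‑variable inequality about the eigenvalues. Diagonalizing, write the eigenvalues of $H$ as $x_1,\dots,x_n$; since $H$ is positive definite with $\operatorname{tr}H=1$ they lie in $(0,1)$ and sum to $1$. With $\phi(t):=\log t-\alpha\log(1-t)$ we have $\det H/\det(I-H)^\alpha=\exp\big(\sum_i\phi(x_i)\big)$ and a direct computation gives $n\phi(1/n)=\log\big(n^\alpha/(n(n-1)^\alpha)\big)^n$, so it suffices to prove $\sum_i\phi(x_i)\le n\phi(1/n)$ on the open simplex $\Delta=\{x:x_i>0,\ \sum_i x_i=1\}$, with equality only at the barycenter. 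The essential difficulty is that $\phi$ is \emph{not} concave on $(0,1)$: since $\phi''(t)=-t^{-2}+\alpha(1-t)^{-2}$ changes sign at $t^*:=(1+\sqrt\alpha)^{-1}$, the function $\phi$ is concave on $[0,t^*]$ and convex on $[t^*,1)$. The hypothesis $\alpha\le n-1$ (with $n\ge3$) enters first here, through $\alpha<(n-1)^2$, which gives $1/n<t^*$, so the barycenter coordinate lies in the concavity region.

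First I would carry out a reduction: for any $x\in\Delta$ I would produce $x'\in\Delta$ with $\sum_i\phi(x_i')\ge\sum_i\phi(x_i)$ and with at most one coordinate of $x'$ exceeding $t^*$. To do this, take the largest coordinate $x_j$ with $x_j>t^*$ and, for every other coordinate $x_k>t^*$, transfer its excess $x_k-t^*$ onto $x_j$. Because $\phi'(t)=t^{-1}+\alpha(1-t)^{-1}$ is increasing on $[t^*,1)$ and the receiving coordinate always stays at least as large as the donating one throughout each transfer, the sum $\sum_i\phi$ does not decrease; one checks using $\sum_i x_i=1$ and $n\ge3$ that every intermediate configuration stays in $\Delta$. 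This isolates the failure of concavity into a single coordinate.

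Next, the endgame: bound $\sum_i\phi(x_i')$ when $x'$ has at most one coordinate above $t^*$. If none does, all coordinates lie in $[0,t^*]$ and Jensen's inequality gives $\sum_i\phi(x_i')\le n\phi(1/n)$, with equality iff $x'=\tfrac1n(1,\dots,1)$. If exactly one does, say $x_n'>t^*$, then (since $x_n'>t^*$ and $nt^*\ge1$ force the average $\tfrac{1-x_n'}{n-1}$ to lie in $[0,t^*]$) Jensen applied to the other $n-1$ coordinates gives $\sum_i\phi(x_i')\le G(x_n')$, where $G(c):=(n-1)\phi\big(\tfrac{1-c}{n-1}\big)+\phi(c)$. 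I would then show $G(c)\to-\infty$ as $c\to0^+$ and as $c\to1^-$, and that $G'(c)=\phi'(c)-\phi'\big(\tfrac{1-c}{n-1}\big)$ vanishes on $(0,1)$ only at $c=1/n$: since $\phi'$ is two‑to‑one with the pair of preimages $u\ne w$ of a common value characterized by $(1-u)(1-w)=\alpha uw$, any other interior critical point would satisfy $n-2=(\alpha-1)c$, i.e. $c=\tfrac{n-2}{\alpha-1}\ge1$, which is excluded exactly by $\alpha\le n-1$. Hence $c=1/n$ is the unique, and therefore global, interior maximum of $G$, so $G(x_n')\le G(1/n)=n\phi(1/n)$; equality would force $x_n'=1/n<t^*$, a contradiction, so this case is strict.

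Combining the two steps, $\sum_i\phi(x_i)\le\sum_i\phi(x_i')\le n\phi(1/n)$ on all of $\Delta$, with equality iff $x=\tfrac1n(1,\dots,1)$, i.e. iff $H=\tfrac1n I$; exponentiating yields the stated inequality together with its equality case. (The endpoint $\alpha=n-1$ follows by continuity in $\alpha$ of both sides, or directly, since the argument above is valid verbatim there.) The step I expect to be the main obstacle is the endgame analysis of $G$ — in particular, ruling out a second interior critical point; this is precisely the place where $\alpha\le n-1$ is indispensable, since for $\alpha>n-1$ the extra critical point $c=\tfrac{n-2}{\alpha-1}$ enters $(0,1)$ and the inequality genuinely fails. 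The reduction step is conceptually routine but requires mild care to keep every intermediate configuration inside the open simplex.
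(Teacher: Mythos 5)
Your proof is correct. Since the paper only cites this lemma from Besson--Courtois--Gallot and does not reproduce an argument, there is no in-paper proof to compare against; your route --- rewriting the claim as $\sum_i\phi(x_i)\le n\phi(1/n)$ on the open simplex with $\phi(t)=\log t-\alpha\log(1-t)$, exploiting the concave/convex split of $\phi$ at $t^*=(1+\sqrt{\alpha})^{-1}$ (which lies above $1/n$ exactly because $\alpha<(n-1)^2$), a mass-transfer reduction to at most one coordinate in the convex region, and a one-variable endgame driven by the level-pair relation $(1-u)(1-w)=\alpha uw$ for $\phi'$ --- is a clean, self-contained derivation. It is in the same spirit as the appendix argument of Besson--Courtois--Gallot, which also hinges on $\phi'$ being at most two-to-one: there the observation is phrased via Lagrange multipliers (at any critical point of the constrained sum the eigenvalues take at most two distinct values), whereas your mass-transfer step reaches the same two-value endgame constructively.

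One small inaccuracy: $G(c)\to-\infty$ as $c\to1^-$ only when $\alpha<n-1$, since the coefficient of $\log(1-c)$ in $G$ is $n-1-\alpha$; at $\alpha=n-1$ this vanishes and $G(1^-)=-(n-1)\log(n-1)$ is finite, so the argument is not ``valid verbatim'' at the endpoint as claimed. This does not create a gap: $G(0^+)=-\infty$, the only interior zero of $G'(c)=\phi'(c)-\phi'\bigl(\tfrac{1-c}{n-1}\bigr)$ is $c=1/n$, and a sign check near $c=1/n$ (both arguments lie in the concave region, where $\phi'$ is decreasing) shows $G'$ crosses from positive to negative there, so $c=1/n$ is the global maximum regardless of the $c\to1^-$ behavior. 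With that adjustment the equality case at $\alpha=n-1$ follows directly, which is preferable to the continuity fallback since continuity alone would only give the non-strict inequality.
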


Combining the above three inequalities (setting $\alpha = \frac{2(n-1)}{n-d}$) together with the fact that $h(g_0) = n + d - 2$, we obtain: 
\begin{lem} \label{Jlower} \cite[Proposition 5.2 i)]{BCGnegcurv} \label{jacbd}
\[ |{\rm Jac} F(p)| \leq \left( \frac  {h(g)}{h(g_0)} \right)^n. \]
\end{lem}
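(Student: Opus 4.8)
The plan is to derive the bound by composing Lemmas \ref{jacbda}, \ref{jacbdb} and \ref{brain} through a purely algebraic manipulation of $\det H$, $\det K$ and $\det(I-H)$, with essentially no extra geometric input. The one geometric fact that is genuinely needed is that $H$ has trace $1$, which is required in order to apply Lemma \ref{brain}: since $\overline f_* \mu_p$ is a probability measure and the Busemann functions on $\tilde N$ have unit gradient, choosing an orthonormal basis $\{u_i\}$ of $T_{F(p)} N$ and summing the defining relation (\ref{H}) gives
\[
{\rm tr}(H_{F(p)}) = \int_{\partial \tilde N} \sum_{i=1}^n \bigl( dB_{F(p), \xi}(u_i) \bigr)^2 \, d(\overline f_* \mu_p)(\xi) = \int_{\partial \tilde N} \| dB_{F(p), \xi} \|^2 \, d(\overline f_* \mu_p)(\xi) = 1 ,
\]
and $H$ is positive definite with $0 < H < I$, so $\det H$ and $\det(I-H)$ are positive.

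Next I would set $\alpha = \frac{2(n-1)}{n-d}$ and check $1 < \alpha \le n-1$: the left inequality is equivalent to $n > 2-d$, which holds since $n \ge 3$, and the right one is equivalent to $n \ge d+2$, which holds in all four cases ($d=1$ with $n\ge 3$; $d=2$ with $n\ge 4$; $d=4$ with $n\ge 8$; $d=8$ with $n=16$). Because $\alpha \cdot \frac{n-d}{n+d-2} = \frac{2(n-1)}{n+d-2}$, one has the identity
\[
\frac{\det(H)^{(n-d)/(n+d-2)}}{\det(I-H)^{2(n-1)/(n+d-2)}} = \left( \frac{\det H}{\det(I-H)^{\alpha}} \right)^{(n-d)/(n+d-2)} ,
\]
so Lemma \ref{brain} (applicable since ${\rm tr}(H)=1$ and $\alpha$ lies in the admissible range) bounds the right-hand side by $\bigl( \frac{n^{\alpha}}{n(n-1)^{\alpha}} \bigr)^{n(n-d)/(n+d-2)}$.

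Substituting this into Lemma \ref{jacbdb} and using $\alpha \cdot \frac{n(n-d)}{n+d-2} = \frac{2n(n-1)}{n+d-2}$ gives
\[
\frac{\det H}{\det(K)^2} \le \frac{(n-1)^{2n(n-1)/(n+d-2)}}{(n+d-2)^{2n}} \cdot \frac{n^{2n(n-1)/(n+d-2)}}{n^{n(n-d)/(n+d-2)} \, (n-1)^{2n(n-1)/(n+d-2)}} .
\]
The powers of $(n-1)$ cancel, and the powers of $n$ combine via $\frac{2n(n-1)}{n+d-2} - \frac{n(n-d)}{n+d-2} = \frac{n(n+d-2)}{n+d-2} = n$, leaving $\frac{\det H}{\det(K)^2} \le \frac{n^n}{(n+d-2)^{2n}}$, equivalently $\frac{\det(H)^{1/2}}{\det K} \le \frac{n^{n/2}}{(n+d-2)^n}$.

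Finally I would feed this into Lemma \ref{jacbda}: the factor $n^{n/2}$ cancels and, using $h(g_0) = n+d-2$, one obtains
\[
|{\rm Jac} F(p)| \le \frac{h^n(g)}{n^{n/2}} \cdot \frac{n^{n/2}}{(n+d-2)^n} = \frac{h^n(g)}{(n+d-2)^n} = \left( \frac{h(g)}{h(g_0)} \right)^n ,
\]
which is the claim. The only real obstacle is bookkeeping: pinning down $\alpha$ so that all the exponents telescope when Lemmas \ref{jacbdb} and \ref{brain} are composed, and checking the two hypotheses (${\rm tr}(H)=1$ and $1 < \alpha \le n-1$) that license those lemmas; everything else is formal. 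For the Cayley case the same chain of inequalities applies once the analogue of Lemma \ref{jacbda} from \cite{yupingoct} is used.
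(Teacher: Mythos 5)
Your derivation is correct and fills in exactly the algebraic verification that the paper leaves implicit when it says "combining the above three inequalities (setting $\alpha = \frac{2(n-1)}{n-d}$) together with the fact that $h(g_0) = n+d-2$." The exponent bookkeeping, the check that ${\rm tr}(H)=1$, and the verification that $1 < \alpha \le n-1$ in each of the four symmetric-space cases all match what the paper (and the cited BCG argument) intend.
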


As in the proof of \cite[Theorem 5.1]{BCGnegcurv}, the above lemma relates the volumes of $M$ and $N$ as follows:
\begin{equation}\label{bcgvolcalc}
{\rm Vol}(N, g_0) \leq \int_M |F^* d {\rm Vol}| = \int_M |({\rm Jac} F) d {\rm Vol}| \leq  \left( \frac{h(g)}{h(g_0)} \right)^n {\rm Vol}(M, g).
\end{equation}

\begin{rem}\label{uppervolest}
This, together with Lemma \ref{entropy}, improves one of the inequalities in Theorem \ref{thm:volest} in the special case where $N$ is a locally symmetric space. 
\end{rem}

With this setup in mind, the argument in \cite{BCGnegcurv} showing that $F$ is an isometry consists of the following components:
\begin{enumerate}
\item If the volumes and entropies are equal, then the inequalities in (\ref{bcgvolcalc}) are all equalities, which gives equality in Lemma \ref{jacbd}. \\

\item Thus, equality also holds in Lemmas \ref{jacbda} and \ref{jacbdb}, from which it follows that
$H = \frac{1}{n} I$ and $K = \frac{n+d-2}{n} I =  \frac{h(g_0)}{n} I$. See \cite[p. 639]{BCGnegcurv}. \\

\item With $H$ and $K$ as above, the end of the proof of Proposition 5.2 ii) in \cite{BCGnegcurv} shows that $dF_p = \left( \frac{h(g_0)}{h(g)} \right) I$, which means $F$ is an isometry in the case where the entropies are equal. This concludes the proof of Theorem 1 in \cite{BCGnegcurv}.
\end{enumerate}
%%%%%%%% end of summary

Assuming instead that $1 - \eps \leq \frac{\mathcal{L}_{g_0}}{\mathcal{L}_g} \leq 1 + \eps$, the equalities of volumes and entropies are replaced with the conclusions of Theorem \ref{thm:volest} and Lemma \ref{entropy} respectively. 
As such, to prove our main theorem (Theorem \ref{mainthm}), it remains to show the following:
\begin{thm}\label{thm:BCGstab}
Let $(M, g)$ be a closed Riemannian manifold of dimension at least 3 with fundamental group $\Gamma$, diameter at most $D$, injectivity radius at least $i_M$, and sectional curvatures contained in the interval $[-b^2, -a^2]$.
Let $(N, g_0)$ be a locally symmetric space with sectional curvatures contained in the interval $[-4, -1]$.

Then there exists small enough $\eps_0 = \eps_0(n, i_M)$ so that whenever $\eps_1, \eps_2 \leq \eps_0$ and
\begin{equation}\label{eq:vol-ent}
(1 - \eps_1){\rm vol}(M) \leq {\rm vol}(N), \, (1 - \eps_2) h(g_0) \leq h(g) \leq (1 + \eps_2)h(g_0),
\end{equation}
there is a $C^2$ map $F: M \to N$ homotopic to $f$ 
and constants 
%{\color{purple} 
$C = C (n, \Gamma, b, D)$
%} 
and $\delta = 1 - \frac{1-\eps_1}{1+\eps_2}$
such that for all $v \in TM$ we have
\begin{equation*}\label{lipest}
(1 - C \delta^{1/16 n}) \Vert v \Vert_g \leq \Vert dF (v) \Vert_{g_0} \leq (1 + C \delta^{1/16 n}) \Vert v \Vert_g.
\end{equation*}
\end{thm}

\begin{rem}\label{injN}
Under the assumptions of Theorem \ref{mainthm}, we have $i_M \geq i_0$ where $i_0$ is a constant depending only on $\eps$ and $\Gamma$. 
Indeed, (\ref{MLSass}), together with the fact that the injectivity radius of $M$ is half the length of the shortest closed geodesic in $M$ \cite[p.178]{petersen}, gives $i_M \geq  \frac{1}{1+\eps} i_N$, and $i_N$ depends only on $\Gamma$ by Mostow rigidity.
\end{rem}

To obtain estimates for $\Vert d F_p \Vert$ in terms of $\delta$, we proceed as in the above outline:

\begin{enumerate}

\item We show equality almost holds in (\ref{bcgvolcalc}); that is, we find a lower bound for ${\rm Jac}F(p)$ of the form $(1 - \beta) (h(g)/h(g_0))^n$ for suitable small $\beta > 0$ (Proposition \ref{jacalmost}).

\item This implies the eigenvalues of $H$ are all close to $1/n$ and the eigenvalues of $K$ are all close to $h(g_0)/n$ (Proposition \ref{HKapprox}).

\item With $H$ and $K$ as above, we mimic the proof of  \cite[Proposition 5.2 ii)]{BCGnegcurv} to obtain bounds for $\Vert d F_p \Vert$, which completes the proof of Theorem \ref{thm:BCGstab}.

\end{enumerate}

The main difficulty is step (1), where we cannot simply mimic the arguments in \cite{BCGnegcurv}. 
Indeed, with the hypotheses of Theorem \ref{thm:BCGstab}, the inequalities in (\ref{bcgvolcalc}) become
\begin{equation}\label{eq:approxint}
\frac{1 - \eps_1}{1 + \eps_2} \left( \frac{h(g)}{h(g_0)} \right)^n {\rm Vol}(M) \leq \int_M |{\rm Jac} F |  \leq  \left( \frac{h(g)}{h(g_0)} \right)^n {\rm Vol}(M),
\end{equation}
which does not give a lower bound for the integrand.
In order to obtain a lower bound for $|{\rm Jac} F|$, we use the above lower bound for its integral together with a Lipschitz bound for the function $p \mapsto |{\rm Jac} F(p)|$ (Proposition \ref{jlip}).
The fact that this function is Lipschitz is immediate from the fact that $F$ is $C^2$;
however, it is not clear a priori how the Lipschitz bound depends on $(M, g)$. 
 
\subsection{Lipschitz bound for $F$} 

Recall the BCG map $F$ is defined implicitly (see ( \ref{defnF})), and its derivative $dF_p$ satisfies the following equation
\begin{equation*}
\langle K dF_p (v), u \rangle  = h(g) \int_{\partial \tilde N} d B^N_{F(p), \xi} (u) dB^M_{p, \overline f^{-1} \xi} (v) d (\overline f_* \mu_p)(\xi).
\end{equation*}
(See (\ref{impder}) and (\ref{K}).)
In order to use this equation to find a Lipschitz bound for ${\rm Jac} F(p)$, we start by bounding the quadratic form $K$ away from zero (Lemma \ref{Klower}).
Recall 
\begin{equation}\label{Khess}
\langle K_{F(p)} u, u \rangle := \int_{\partial \tilde N} ( {\rm Hess} B_{\xi})_{F(p)} (u) \, d(\overline f_* \mu_p)(\xi).
\end{equation}
Note that $K$ depends not only on the symmetric space $(N, g_0)$, but also on $(M, g)$, since $\mu_p$ is the Patterson-Sullivan measure on $\partial \tilde M$ defined with respect to the metric $g$.
%
%We start by recalling that $K$ is positive-definite for any given (negatively curved) metric $g$ on $M$ (see \cite[Definition 3.2]{BCGnegcurv}). We include a detailed proof as we will refer to the arguments later.

%{\color{purple}
For the remainder of the paper, we will let $A = A(n, \Gamma, D, i_0, b)$ denote the constant from the conclusion of Proposition \ref{prop:contr-qi}. 
That is, for all $p, q \in \tilde M$, we have
\begin{equation}\label{pseudoisom}
A^{-1} d(p, q) - B \leq d(f(p), f(q)) \leq A \, d(p, q),
\end{equation} 
where $B$ depends only on $A$ and the diameter bound $D$.

%Note that since, in negative curvature, the injectivity radius is half the length of the shortest closed geodesic, Hypothesis \ref{hyp:MLSandA} implies that if the injectivity radius of $(N, g_0)$ is bounded below by $i_0$, then that of $M$ is bounded below by $(1 - \eps) i_0$. Moreover, since $(N, g_0)$ is locally symmetric, its geometry, including the injectivity radius lower bound, is completely by $\Gamma$. Hence we can absorb the $i_0$ dependence of $A$ into $\Gamma$ and say that $A$ depends only on $n, \Gamma, D, b$. 
%}

\begin{lem}\label{Klower}
There is $\kappa = \kappa(n, \Gamma, A, D) > 0$ so that $\langle K_{F(p)} u, u \rangle \geq \kappa$ for all $p \in \tilde M$, $u \in T_{F(p)}^1 \tilde N$. 
\end{lem}

\begin{proof}
First we examine the integrand in (\ref{Khess}). Fix $p \in \tilde M$ and $u \in T^1_{F(p)} \tilde N$ and consider $( {\rm Hess} B_{\xi})_{F(p)} (u)$. Let $v_{F(p), \xi}$ be the unit tangent vector based at $F(p)$ so that the geodesic with initial vector $v$ has forward boundary point $\xi$, ie, $v_{F(p), \xi}$ is the gradient of $B_{\xi, F(p)}$. Let $\theta_{\xi}$ denote the angle between $v_{F(p), \xi}$ and $u$. Then we can write $u = (\cos \theta_{\xi}) v_{F(p), \xi} + (\sin \theta_{\xi}) w$ for some unit vector $w$ perpendicular to $v_{F(p), \xi}$. Since $( {\rm Hess} B_{\xi})_{F(p)} (u) = \langle \nabla_u v_{F(p), \xi}, u \rangle$, we obtain $( {\rm Hess} B_{\xi})_{F(p)} (u) = \sin^2 \theta_{\xi} ( {\rm Hess} B_{\xi})_{F(p)} (w)$.
Let $R$ denote the curvature tensor of $(\tilde N, \tilde g_0)$. Using the formula
\begin{equation}\label{sshess}
( {\rm Hess} B_{\xi})_{F(p)} (\cdot) = \sqrt{-R(v_{F(p), \xi}, \cdot, v_{F(p), \xi}, \cdot)}
\end{equation}
(see \cite[p. 16]{connellfarb}), together with the fact the sectional curvatures of $\tilde N$ are at most $-1$, it follows that  
\begin{equation*}\label{Kintegrandlower}
( {\rm Hess} B_{\xi})_{F(p)} (u) \geq \sin^2 \theta_{\xi}.
\end{equation*}
Hence, the integrand in the definition of $K_{F(p)}$ is 0 if and only if $\theta_{\xi} = 0, \pi$. This occurs precisely when $\xi = \pi(\pm u)$, where $\pi$ is the projection of a unit tangent vector to its forward boundary point in $\partial \tilde N$.
%Since $\mu_p$ is non-atomic, we have $(\overline f_* \mu_p)( \partial \tilde N \setminus \{ \pi(\pm u) \}) = 1 > 0$. Thus 
%$( {\rm Hess} B_{\xi})_{F(p)} (u) > 0$ for a set of $\xi$ of positive $\overline f_* \mu_p$-measure, which means  $K_{F(p)}(u, u) > 0$ for all $(F(p), u) \in T^1 \tilde N$. 

%We now construct neighborhoods $\mathcal{O}_{\pm} \ni \pi(\pm u)$ in $\partial \tilde N$ whose $f_* \mu_p$-measure is uniformly small for all homeomorphisms $f: (\tilde M, g) \to (\tilde N, g_0)$ which are $A$-Lipschitz. 
Now fix $T > 0$, whose size will be specified later.
Let $q_{\pm T}$ denote the footpoints of $\phi^{\pm T} u$, respectively.
Given $y \in \tilde N$ and $\eta \in \partial \tilde N$, let $c_{y, \eta}$ denote the unique $g_0$ geodesic through $y$ and $\eta$. 
Let 
\[
\mathcal{O}_{\pm} = \{ \eta \in \partial \tilde N \, | \, c_{F(p), \eta} \cap B(q_{\pm T}, 1) \neq \emptyset \}.
\]
%\sout{
%Recall that by} \cite[Lemma 5.1]{butt22finite}, 
%\sout{there is a constant $B = (A, {\rm diam}(M), {\rm diam}(N))$ so that
%\begin{equation}
%A^{-1} d(p, q) - B \leq d(f(p), f(q)) \leq A d(p, q)
%\end{equation} 
%for all $p, q \in \tilde M$.}
Then \eqref{pseudoisom} implies that
\[
\overline{f}^{-1}(\mathcal{O}_{\pm}) 
\subset 
\{ \xi \in \partial \tilde M \, | \, c_{f^{-1}(F(p)), \xi} \cap B(f^{-1}(q_{\pm T}), R) \},
\]
for some $R \leq A + B$. 
By the Shadow Lemma for Patterson--Sullivan measures (see, eg, \cite[Lemma 1.3]{roblin}), together with (\ref{pseudoisom}), we have 
\begin{align*}
\mu_p (\overline{f}^{-1}(\mathcal{O}_{\pm})) &\leq \exp(-h(g)( d_g(f^{-1}(F(p)), f^{-1}(q_{\pm T})) - 2 R)\\
&\leq \exp(-h(g) (A ^{-1}T- 2( A+B)).
\end{align*}
Choose $T$ sufficiently large so the above line is less than $1/4$. 
Then there is $\theta = \theta(g_0, T) = \theta(n, T)$ so that $\mathcal{O}_{\pm}$ consists of points $\pi(w)$ where $w \in T^1_{Fp)} \tilde N$ makes angle less than $\theta$ with $\pm u$, respectively. 
This means the integrand in the definition of $K$ is bounded below by $\sin \theta$ on the set $\partial \tilde N \setminus (\mathcal{O}_+ \cup \mathcal{O}_-)$. Since we have shown this set has measure at least $1/2$, this completes the proof. 

\end{proof}

This lower bound for $\kappa$ allows us to find an a priori Lipschitz bound for $F$.
While the fact that $F$ is Lipschitz follows from the fact that $F$ is $C^2$, it is not clear a priori which properties of $(M, g)$ this Lipschitz constant depends on.
In the end, this Lipschitz constant will turn out to be close to 1 in a way that depends only on $\eps, n, \Gamma, \Lambda, A$ by Theorem \ref{mainthm}.

\begin{lem}\label{lipF}
Let $F$ be the BCG map. Then $\Vert dF_p \Vert \leq \frac{h(g)}{\kappa}$ for all $p \in \tilde M$.  
\end{lem}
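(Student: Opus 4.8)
The plan is to use the defining equation for $dF_p$ together with the lower bound $\kappa$ for $K$ from Proposition \ref{Klower}. Recall from (\ref{impder}) and (\ref{K}) that for all $v \in T_p M$ and $u \in T_{F(p)} N$ we have
\[ \langle K_{F(p)} dF_p(v), u \rangle = h(g) \int_{\partial \tilde N} dB^N_{F(p), \xi}(u) \, dB^M_{p, \overline f^{-1}\xi}(v) \, d(\overline f_* \mu_p)(\xi). \]
First I would fix $v \in T_p^1 \tilde M$ and choose $u = dF_p(v)/\Vert dF_p(v)\Vert$, so that the left-hand side becomes $\langle K_{F(p)} dF_p(v), dF_p(v)\rangle / \Vert dF_p(v) \Vert$. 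Since $K_{F(p)}$ is positive-definite with $\langle K_{F(p)} w, w\rangle \geq \kappa \Vert w \Vert^2$ for all $w$ (this is the homogeneous form of Proposition \ref{Klower}, which is stated for unit vectors), the left-hand side is at least $\kappa \Vert dF_p(v) \Vert$.

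Next I would bound the right-hand side from above. Both $B^N_{F(p),\xi}$ and $B^M_{p, \overline f^{-1}\xi}$ are Busemann functions, so their differentials have norm exactly $1$ at every point; hence $|dB^N_{F(p),\xi}(u)| \leq \Vert u \Vert = 1$ and $|dB^M_{p,\overline f^{-1}\xi}(v)| \leq \Vert v \Vert = 1$. Therefore the integrand is bounded in absolute value by $1$, and since $\overline f_* \mu_p$ is a probability measure the whole integral is bounded by $1$. This gives $\kappa \Vert dF_p(v)\Vert \leq h(g)$, i.e. $\Vert dF_p(v) \Vert \leq h(g)/\kappa$ for every unit vector $v$, which is precisely the claim $\Vert dF_p \Vert \leq h(g)/\kappa$.

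There is essentially no serious obstacle here: the statement is a direct consequence of the positive-definiteness lower bound on $K$ (the real work having been done in establishing the uniform $\kappa$ in Proposition \ref{Klower}) combined with the trivial unit-norm bound on differentials of Busemann functions. The only minor points to be careful about are that Proposition \ref{Klower} as stated applies to unit vectors $u$ and should be rephrased in quadratic-form language $\langle K w, w\rangle \geq \kappa \Vert w\Vert^2$, and that $\overline f_* \mu_p$ is normalized to be a probability measure so no extra constant appears. I would also note in passing that this bound makes no use of the marked length spectrum hypothesis beyond what is already baked into $\kappa$.
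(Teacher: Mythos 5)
Your proof is correct and takes essentially the same approach as the paper: both use the implicit equation (\ref{impder}), apply the uniform lower bound $\kappa$ from Proposition \ref{Klower} on the left side, and bound the right side by $h(g)\Vert v\Vert\Vert u\Vert$ using the unit-norm property of Busemann differentials (the paper phrases this bound via Cauchy--Schwarz, you bound the integrand pointwise against the probability measure $\overline f_*\mu_p$, but these are interchangeable). The minor difference in choice of $u$ (unit normalized vs. $u = dF_p(v)$) is cosmetic and leads to the same conclusion.
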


\begin{proof}
Using (\ref{impder}), 
we get the following inequality by applying Cauchy--Schwarz (see \cite[(5.3)]{BCGnegcurv}) together with the fact that $\Vert dB(w) \Vert \leq \Vert w \Vert$ for any Busemann function:
\[ \langle K_{F(p)} dF_p v, u \rangle \leq h(g) \Vert v \Vert \Vert u \Vert. \ \]
Now let $\Vert v \Vert = 1$ and let $u = dF_p(v)$. Then the above inequality and Proposition \ref{Klower} give
\[ \kappa \Vert dF_p v \Vert^2 \leq \langle K_{F(p)} dF_p(v), dF_p(v) \rangle \leq h(g) \Vert dF_p (v) \Vert. \]
Thus
$\Vert dF_p(v) \Vert \leq \frac{h(g)}{\kappa},$
which completes the proof.
\end{proof}

\subsection{Lipschitz constant for ${\rm Jac} F(p)$}

Let $p, q \in \tilde M$ and let $c(t)$ be unit speed the geodesic joining $p$ and $q$ such that $c(0) = p$. Let $P_{c(t)}$ denote parallel transport along the curve $c(t)$.
For $i = 1,2$, let $u_i \in T^1_{F(p)} \tilde N$ and let $U_i(t) = P_{F(c(t))} u_i$.

 We begin by finding a bound for the derivative of the function 
 $t \mapsto \langle K_{F(c(t))} U_1(t), U_2(t) \rangle$ for $0 \leq t \leq T_0$. 

This bound will depend only on $\eps_2$, $n$, $\Gamma$, $A$, $D$ and $T_0$.

\begin{lem}
Let $K_{F(p)}^{\xi} (u_1, u_2) = ({\rm Hess} B_{\xi})_{F(p)} (u_1, u_2)$. Let $U_i(t) = P_{F(c(t))} u_i$ as above. Then the function $t \mapsto K^{\xi}_{F(c(t))} (U_1(t), U_2(t))$ has derivative bounded by a constant depending only on $\eps_2$, $n$, $\Gamma$, $A$, $D$. 
\end{lem}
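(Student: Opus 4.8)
The plan is to differentiate the function directly and estimate the result factor by factor. Write $\sigma(t) = F(c(t))$ for the resulting curve in $\tilde N$; it is $C^1$ (indeed $C^2$, since $F$ is $C^2$), so $\sigma'(t)$ makes sense and the $U_i$ are genuinely parallel along it. Since $\tfrac{D}{dt}U_i(t) = 0$, the product rule for the covariant derivative of the tensor field $\mathrm{Hess}\, B_\xi$ leaves only one surviving term:
\[
\frac{d}{dt}\, K^{\xi}_{\sigma(t)}\bigl(U_1(t), U_2(t)\bigr) = \bigl(\nabla_{\sigma'(t)}\, \mathrm{Hess}\, B_\xi\bigr)\bigl(U_1(t), U_2(t)\bigr).
\]
Since parallel transport is an isometry we have $\|U_i(t)\| = \|u_i\| = 1$, so it suffices to bound $\|\nabla \mathrm{Hess}\, B_\xi\|$ uniformly over $\tilde N$ and over $\xi \in \partial \tilde N$, and to bound $\|\sigma'(t)\|$.

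For the first bound I would use that $(N, g_0)$ is locally symmetric with normalized metric: then $\tilde N$ is a rank-one symmetric space, the Busemann functions $B_\xi$ are real-analytic, $\mathrm{Hess}\, B_\xi$ is a smooth symmetric $2$-tensor depending continuously (in $C^\infty_{\mathrm{loc}}$) on $\xi$, and the isometry group acts transitively and equivariantly, $\mathrm{Hess}\, B_{g\xi} = g_*\, \mathrm{Hess}\, B_\xi$. Hence $\sup_{x \in \tilde N,\, \xi \in \partial \tilde N}\|\nabla \mathrm{Hess}\, B_\xi\|_x$ equals $\sup_{\xi \in \partial \tilde N}\|\nabla \mathrm{Hess}\, B_\xi\|_{x_0}$ for any fixed basepoint $x_0$, which is finite by compactness of $\partial \tilde N$; since there are only finitely many negatively curved rank-one symmetric spaces in dimension $n$ and the metric is normalized, the resulting constant $C_0$ depends only on $n$ (equivalently, by Mostow rigidity, only on $\Gamma$). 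An alternative, more hands-on route is to differentiate the identity $\mathrm{Hess}\, B_\xi = \sqrt{-R(v_{\cdot, \xi}, \cdot, v_{\cdot, \xi}, \cdot)}$ used in Lemma \ref{kappag}, exploiting that $R$ is parallel on a symmetric space and that $v_{\cdot,\xi} = \nabla B_\xi$ has covariant derivative controlled by $\mathrm{Hess}\, B_\xi$ itself.

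For the second bound, since $c$ is a unit-speed geodesic, $\sigma'(t) = dF_{c(t)}(\dot c(t))$, so $\|\sigma'(t)\| \le \|dF_{c(t)}\|_{\mathrm{op}} \le h(g)/\kappa$ by Lemma \ref{lipF}, where $\kappa$ is the lower bound for $K$ from Proposition \ref{Klower}; here $h(g) \le (1-\eps)^{-1} h(g_0)$ depends only on $n$ and $\eps$ by Lemma \ref{entropy}, and $\kappa$ depends only on $n$, $\eps$, $\Gamma$, $\Lambda$ by Proposition \ref{Klower}. Combining,
\[
\left|\frac{d}{dt}\, K^{\xi}_{F(c(t))}\bigl(U_1(t), U_2(t)\bigr)\right| \le C_0\, \frac{h(g)}{\kappa},
\]
a bound of the required form, uniform in $\xi$ and in the choice of $p, q, u_1, u_2$ (and, unlike the version before the lemma that also involves the varying measure $\overline f_* \mu_{c(t)}$, not requiring a bound on $T_0$). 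The only place where genuine work enters, rather than routine tensor calculus and soft homogeneity arguments, is the uniform operator-norm bound $h(g)/\kappa$ for $dF$; this is precisely what Proposition \ref{Klower} and Lemma \ref{lipF} were set up to provide, so the main obstacle has effectively already been cleared by the previous subsections.
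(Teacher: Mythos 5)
Your proof is correct and follows essentially the same approach as the paper: differentiate using that the $U_i$ are parallel to get $\nabla_{\sigma'(t)} \mathrm{Hess}\, B_\xi(U_1, U_2)$, bound $\|\sigma'(t)\|\le h(g)/\kappa$ via Lemma \ref{lipF}, and bound $\|\nabla \mathrm{Hess}\, B_\xi\|$ by a constant depending only on $\tilde N$ (hence on $n$, or on $\Gamma$) using homogeneity of the symmetric space. The only cosmetic difference is how the uniformity is phrased: you use transitivity of the full isometry group plus compactness of $\partial\tilde N$ to reduce to a single basepoint, whereas the paper uses the finer fact that the stabilizer of $\xi$ already acts transitively on $\tilde N$, so $\|\nabla K^\xi\|$ is actually constant on $\tilde N$; both give the same conclusion.
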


\begin{proof}
Let $X = \frac{d}{dt}|_{t = 0} F(c(t))$. Then it suffices to find a uniform bound for $\Vert X (K^{\xi}(U_1, U_2)) \Vert$ on $\tilde N$. 
Since the $U_i$ are parallel along $X$, we have $X (K^{\xi}(U_1, U_2)) = \nabla K^{\xi} (U_1, U_2, X)$ (see \cite[Definition 4.5.7]{docarmo}).
So $\Vert X (K^{\xi}(U_1, U_2)) \Vert \leq \Vert \nabla K^{\xi} \Vert \Vert U_1 \Vert \Vert U_2 \Vert \Vert X \Vert$. Since $\Vert X \Vert \leq h(g)/\kappa$ by the previous lemma and $\Vert U_1 \Vert = \Vert U_2 \Vert = 1$, it remains to control $\Vert \nabla K^{\xi} \Vert$. We claim this quantity is uniformly bounded on $\tilde N$. 

First note that if $a$ is an isometry fixing $\xi$, then
\[ K^{\xi}_x(v, w) = K^{\xi}_{a(x)}(a_*v, a_*w). \]
Now fix $x_0 \in \tilde N$ and let $e_1, \cdots e_n \in T_{x_0} \tilde N$ orthonormal frame. 
For any other $x \in \tilde N$, there exists an isometry $a$ taking $x$ to $x_0$ fixing $\xi$ (since $\tilde N$ is a symmetric space). As such, we can extend the $e_i$ to vector fields $E_i$ on all of $\tilde N$. 
Then the quantity 
\[ \nabla K^{\xi} (E_i, E_j, E_k)  = E_k(K^{\xi}(E_i, E_j)) - K^{\xi} (\nabla_{E_k} E_i, E_j) - K^{\xi} (\nabla_{E_k} E_i, E_j)  \]
is invariant by isometries $a$ fixing $\xi$, and is thus constant on $\tilde N$.
This shows the desired claim that $\Vert \nabla K^{\xi} \Vert$ is uniformly bounded on $\tilde N$. The bound depends only on the symmetric space $\tilde N$ and hence only on the dimension $n$.
\end{proof}

\begin{lem}\label{lipK}
Consider the function
\[ t \mapsto \langle K_{F(c(t))} U_1(t), U_2(t) \rangle \]
for $0 \leq t \leq T_0$. Its derivative is bounded by a constant depending only on $\eps, n, \Gamma, A, D, T_0$. 
\end{lem}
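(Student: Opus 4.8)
The plan is to differentiate the integral defining $\langle K_{F(c(t))}U_1(t),U_2(t)\rangle$ under the integral sign, and to control the two sources of $t$-dependence separately: the integrand, which is handled by the previous lemma, and the Patterson--Sullivan measure $\mu^g_{c(t)}$ on $\partial\tilde M$, which is handled by the standard transformation rule for Patterson--Sullivan densities. The one genuine point is that $K$ is defined by integration against a measure that itself moves with $t$, so the first step is to rewrite the quantity as an integral against a \emph{fixed} measure.

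Concretely, writing $p=c(0)$ and pushing $\mu^g_{c(t)}$ forward along $\overline f$, I would start from
\[
\langle K_{F(c(t))}U_1(t),U_2(t)\rangle=\int_{\partial\tilde M}K^{\overline f(\zeta)}_{F(c(t))}\big(U_1(t),U_2(t)\big)\,d\mu^g_{c(t)}(\zeta)
\]
and then use $d\mu^g_{c(t)}(\zeta)=\exp\big(-h(g)\,B^M_\zeta(c(t),p)\big)\,d\mu^g_p(\zeta)$ (the defining quasi-invariance of the Patterson--Sullivan family; cf. Lemma \ref{PSconv}) to obtain
\[
\langle K_{F(c(t))}U_1(t),U_2(t)\rangle=\int_{\partial\tilde M}K^{\overline f(\zeta)}_{F(c(t))}\big(U_1(t),U_2(t)\big)\,\exp\big(-h(g)\,B^M_\zeta(c(t),p)\big)\,d\mu^g_p(\zeta),
\]
an integral against the fixed probability measure $\mu^g_p$. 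Since $F$ is $C^2$, $c$ is a smooth geodesic, and Busemann functions on the symmetric space $\tilde N$ together with their Hessians depend smoothly on the relevant data, the integrand is $C^1$ in $t$ with $t$-derivative jointly continuous in $(t,\zeta)$ on $[0,T_0]\times\partial\tilde M$; compactness of $\partial\tilde M$ then justifies differentiation under the integral sign.

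It remains to bound the $t$-derivative of the integrand uniformly in $\zeta$. By the product rule this splits into two terms. In the first, the factor $\tfrac{d}{dt}K^{\overline f(\zeta)}_{F(c(t))}(U_1(t),U_2(t))$ is bounded by a constant $C_1=C_1(\eps,n,\Gamma,\Lambda)$ by the previous lemma, and the remaining exponential factor lies in $[e^{-h(g)T_0},e^{h(g)T_0}]$ because $|B^M_\zeta(c(t),p)|\le d_g(c(t),p)\le T_0$. In the second, $|K^{\overline f(\zeta)}_{F(c(t))}(U_1(t),U_2(t))|\le 2$, since the Hessian of a Busemann function on $\tilde N$ has operator norm at most $2$ by the curvature formula for ${\rm Hess}\,B_\xi$ and the pinching of $\tilde N$, while $\big|\tfrac{d}{dt}\exp(-h(g)B^M_\zeta(c(t),p))\big|\le h(g)\,e^{h(g)T_0}$ because $B^M_\zeta(\cdot,p)$ has unit gradient and $c$ is unit speed. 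Integrating against $\mu^g_p$ (total mass $1$) then gives
\[
\Big|\tfrac{d}{dt}\langle K_{F(c(t))}U_1(t),U_2(t)\rangle\Big|\le\big(C_1+2h(g)\big)\,e^{h(g)T_0}.
\]

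Finally, Lemma \ref{entropy} together with the normalization of $g_0$ gives $h(g)\le(1+\eps)h(g_0)=(1+\eps)(n+d-2)$, so $h(g)$ is bounded purely in terms of $n$ and $\eps$, and hence the right-hand side above is a constant depending only on $\eps,n,\Gamma,\Lambda,T_0$, as required. I do not expect a serious obstacle here: once the integral is expressed against the fixed measure $\mu^g_p$ via the quasi-invariance of Patterson--Sullivan measures, the estimate is a routine combination of the previous lemma with the $1$-Lipschitz property of Busemann functions; the only thing to be careful about is precisely this conversion, and keeping track that $h(g)$ — and therefore the final constant — stays controlled by $\eps,n,\Gamma,\Lambda,T_0$.
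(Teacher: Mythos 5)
Your proof is correct and takes essentially the same route as the paper's: rewrite the integral against the fixed measure $\overline f_*\mu_p$ via the Patterson--Sullivan quasi-invariance, apply the product rule, bound the derivative of $K^\xi$ by the preceding lemma, bound $K^\xi$ itself via the curvature formula for $\mathrm{Hess}\,B_\xi$ on the pinched symmetric space, and bound the exponential factor and its derivative using $|B|\le T_0$ and the $1$-Lipschitz property of Busemann functions. You are a bit more explicit than the paper about justifying differentiation under the integral sign and about tracking the dependence of $h(g)$ on $\eps,n,\Gamma$, but the underlying argument is the same.
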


\begin{proof}
Note that $\overline f_* \mu_{c(t)}(\xi) = \exp\left[-h(g) B_{\overline{f}^{-1}(\xi)}^M(p, c(t))\right] \, \overline f_* \mu_p ( \xi)$. 
Then  
\[ \langle K_{F(c(t))} U_1(t), U_2(t) \rangle = \int_{\partial \tilde N} K^{\xi} (U_1(t), U_2(t)) \, \exp\left[-h(g) B_{\overline f^{-1}(\xi)}^M(p, c(t))\right] \, \overline f_* \mu_p ( \xi).  \]
%and the function $t \mapsto e^{-h(g) B_{\overline f(\xi)}^M(p, c(t))}$ is $h(g)$-Lipschitz.

The first term in the integrand is bounded above as a consequence of (\ref{sshess}), and this bound depends only on the dimension $n$. By the previous lemma, the derivative of this function is bounded by a constant depending only on $\eps_2, n,  \Gamma, A, D$. 
Since $|B_{\overline{f}^{-1}(\xi)} (p, c(t))| \leq d(p, c(t)) \leq T_0$, the second term is bounded by a constant depending only on $n, \eps, T_0$. The same is true of its derivative, since Busemann functions have gradient 1. 
Hence the derivative of $\langle K_{F(c(t))} U_1(t), U_2(t) \rangle$ is bounded by a constant depending only on the desired parameters.
\end{proof}

\begin{cor}\label{detK}
The function
$t \mapsto \det K_{F(c(t))}$ on the interval $0 \leq t \leq T_0$ is $L_1$-Lipschitz for some $L_1 = L_1(\eps, n, \Gamma, A, D, T_0)$. 
\end{cor}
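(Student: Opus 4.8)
The plan is to derive the Lipschitz bound for $t \mapsto \det K_{F(c(t))}$ directly from the bound on the derivative of each matrix entry, established in Lemma \ref{lipK}. First I would fix an orthonormal frame $e_1, \dots, e_n$ of $T_{F(p)}^1 \tilde N$ and set $U_i(t) = P_{F(c(t))} e_i$, so that $U_1(t), \dots, U_n(t)$ is a parallel orthonormal frame along $F \circ c$. With respect to this frame, the matrix of $K_{F(c(t))}$ has entries $k_{ij}(t) = \langle K_{F(c(t))} U_i(t), U_j(t) \rangle$, and $\det K_{F(c(t))}$ is a fixed polynomial (the determinant polynomial) in the $k_{ij}(t)$.

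The key steps, in order, are: (1) by Lemma \ref{lipK}, each $k_{ij}(t)$ has derivative bounded by a constant $L_0 = L_0(\eps, n, \Gamma, \Lambda, T_0)$ on $[0, T_0]$, so each $k_{ij}$ is $L_0$-Lipschitz; (2) by Lemma \ref{kappag} (or Proposition \ref{Klower}) together with the upper bound on $\mathrm{Hess}\, B_\xi$ coming from (\ref{sshess}), the entries $k_{ij}(t)$ are uniformly bounded, say $|k_{ij}(t)| \leq M$ for a constant $M = M(n)$ depending only on the symmetric space; (3) the determinant polynomial, restricted to the compact region $\{ |k_{ij}| \leq M \}$ of matrix entries, is itself Lipschitz with some constant $L_{\det} = L_{\det}(n, M)$, since it is a polynomial and hence $C^1$ with bounded gradient on that compact set; (4) composing, $t \mapsto \det K_{F(c(t))}$ is Lipschitz with constant $L_1 = n \cdot L_{\det} \cdot L_0$, depending only on $\eps, n, \Gamma, \Lambda, T_0$. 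Concretely, one can write $\frac{d}{dt} \det K_{F(c(t))} = \sum_{i,j} \frac{\partial \det}{\partial k_{ij}}(k_{\bullet\bullet}(t)) \cdot k_{ij}'(t)$, bound each $\frac{\partial \det}{\partial k_{ij}}$ by $L_{\det}$ and each $k_{ij}'$ by $L_0$.

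I do not expect any serious obstacle here: this is a routine consequence of the previous two lemmas, once one observes that the determinant is a smooth function of the entries of $K$ in a fixed frame and the entries are both bounded and Lipschitz. The only mild care needed is to use a \emph{parallel} orthonormal frame along $F \circ c$ so that differentiating $\det K$ genuinely reduces to differentiating the entries $k_{ij}(t) = \langle K U_i, U_j\rangle$ — this is exactly why Lemma \ref{lipK} was phrased in terms of the parallel-transported vectors $U_i(t)$. One should also record that $K_{F(c(t))}$ is positive definite with eigenvalues bounded away from $0$ (Proposition \ref{Klower}) and bounded above, so that $\det K_{F(c(t))}$ stays in a compact subinterval of $(0, \infty)$; this is not needed for the Lipschitz bound itself but will be convenient in the sequel.

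\begin{proof}
Fix $p \in \tilde M$ and let $c(t)$ be the unit-speed $g$-geodesic with $c(0) = p$. Choose an orthonormal frame $e_1, \dots, e_n \in T_{F(p)}^1 \tilde N$ and let $U_i(t) = P_{F(c(t))} e_i$ be its parallel transport along $F \circ c$; then $U_1(t), \dots, U_n(t)$ is an orthonormal frame of $T_{F(c(t))} \tilde N$ for each $t$. Let $k_{ij}(t) = \langle K_{F(c(t))} U_i(t), U_j(t) \rangle$, so that $\det K_{F(c(t))} = \det(k_{ij}(t))$.

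By Lemma \ref{lipK}, for each pair $i, j$ the function $t \mapsto k_{ij}(t)$ has derivative bounded on $[0, T_0]$ by a constant $L_0 = L_0(\eps, n, \Gamma, \Lambda, T_0)$. Moreover, from (\ref{sshess}) and the curvature bounds on $\tilde N$, we have $0 \leq (\mathrm{Hess}\, B_\xi)_{F(c(t))}(U_i(t), U_i(t)) \leq \Lambda_0$ for a constant $\Lambda_0$ depending only on $n$; since $\overline f_* \mu_{c(t)}$ is a probability measure, polarization gives $|k_{ij}(t)| \leq \Lambda_0$ for all $t$ and all $i, j$.

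The determinant, viewed as a polynomial function $D$ of the $n^2$ matrix entries, is $C^1$, hence Lipschitz on the compact set $\{ |k_{ij}| \leq \Lambda_0 \text{ for all } i,j \}$ with some Lipschitz constant $L_{\det} = L_{\det}(n)$ (depending on $\Lambda_0$ and $n$, hence only on $n$). Therefore, for $s, t \in [0, T_0]$,
\[
\left| \det K_{F(c(t))} - \det K_{F(c(s))} \right| = \left| D(k_{\bullet\bullet}(t)) - D(k_{\bullet\bullet}(s)) \right| \leq L_{\det} \sum_{i,j} |k_{ij}(t) - k_{ij}(s)| \leq n^2 L_{\det} L_0 \, |t - s|.
\]
Setting $L_1 = n^2 L_{\det} L_0$, which depends only on $\eps, n, \Gamma, \Lambda, T_0$, shows that $t \mapsto \det K_{F(c(t))}$ is $L_1$-Lipschitz on $[0, T_0]$.
\end{proof}
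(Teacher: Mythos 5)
Your proof is correct and follows essentially the same route as the paper: Lemma \ref{lipK} gives Lipschitz entries in a parallel orthonormal frame, (\ref{sshess}) gives boundedness, and the determinant (a polynomial, hence locally Lipschitz) inherits the Lipschitz property. The paper states this in three sentences; you have simply spelled out the composition argument explicitly.
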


\begin{proof}
By Lemma \ref{lipK}, the entries of the matrix $K_{F(c(t))}$ (with respect to a $g_0$-orthonormal basis) vary in a Lipschitz way. 
Using (\ref{sshess}), we see that for $u_1$ and $u_2$ unit vectors, the expression ${\rm Hess}B^N_{F(p), \xi}(u_1, u_2 )$ is uniformly bounded above by some constant depending only on $(\tilde N, \tilde g_0)$. 
Since the entries of the matrix $K_{F(c(t))}$ are Lipschitz and bounded, it follows the determinant of this matrix is Lipschitz.
\end{proof}

Recall (\ref{impder}) implies
\[ \langle K_{F(p)} dF_p (v), u \rangle = h(g) \int_{\partial \tilde M} dB^N_{F(p), \overline f(\xi)}
(u) dB^M_{p, \xi} (v) d \mu_p (\xi). \]
This formula, together with the Lipschitz bound for $p \mapsto \det K_{F(p)}$ established in Corollary \ref{detK}, will allow us to find a Lipschitz bound for $p \mapsto \det(dF_p) = {\rm Jac F}(p)$.

\begin{lem}\label{lipgradB} 
Let $p, q$ and $c(t)$ be as above. Then the function 
\[t \mapsto d B^M_{c(t), \xi} (P_{c(t)} v)\]
is $b/2$-Lipschitz for all $v \in T^1_p \tilde M$.
\end{lem}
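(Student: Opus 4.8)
The plan is to differentiate the function $t \mapsto dB^M_{c(t), \xi}(P_{c(t)}v)$ directly and to bound its derivative using the Hessian of the Busemann function together with the curvature bounds on $(M,g)$. First I would recall that, since $v$ is parallel transported along $c$, the vector field $t \mapsto P_{c(t)}v$ satisfies $\nabla_{\dot c(t)} P_{c(t)}v = 0$. Writing $V(t) = P_{c(t)}v$ and using that $dB^M_{c(t),\xi}(\cdot) = \langle \nabla B^M_{\xi}, \cdot\rangle$ where $\nabla B^M_\xi$ is the unit gradient field of the Busemann function, I would compute
\[ \frac{d}{dt} dB^M_{c(t), \xi}(V(t)) = \frac{d}{dt}\langle \nabla B^M_{\xi}(c(t)), V(t)\rangle = \langle \nabla_{\dot c(t)} \nabla B^M_{\xi}, V(t)\rangle + \langle \nabla B^M_{\xi}, \nabla_{\dot c(t)} V(t)\rangle. \]
The second term vanishes since $V$ is parallel, so the derivative equals $\mathrm{Hess}\, B^M_{\xi}(\dot c(t), V(t))$.

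Next I would bound this Hessian term. Since $c$ is a unit-speed geodesic, $\|\dot c(t)\| = 1$, and $\|V(t)\| = \|v\| = 1$ because parallel transport is an isometry. So by Cauchy--Schwarz for the symmetric bilinear form $\mathrm{Hess}\, B^M_\xi$, the derivative is bounded in absolute value by the operator norm of $\mathrm{Hess}\, B^M_\xi$ at the point $c(t)$. The key geometric input is the standard comparison estimate: on a manifold whose sectional curvatures lie in $[-\Lambda^2, -\lambda^2]$ (or even just $\le 0$ with lower bound $-\Lambda^2$), the Hessian of a Busemann function — equivalently, the second fundamental form of the horospheres — is bounded above by $\Lambda$ (this is the same kind of bound used in Lemma \ref{kappag} via the formula $\mathrm{Hess}\, B_\xi(\cdot) = \sqrt{-R(v_\xi, \cdot, v_\xi, \cdot)}$, which gives $\|\mathrm{Hess}\, B_\xi\| \le \Lambda$ since sectional curvatures are $\ge -\Lambda^2$). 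Combining, $\left|\frac{d}{dt} dB^M_{c(t),\xi}(P_{c(t)}v)\right| \le \Lambda$.

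This gives Lipschitz constant $\Lambda$ rather than the claimed $\Lambda/2$, so I would need to locate the extra factor of $2$. The refinement presumably comes from the fact that $dB^M_{c(t),\xi}(V(t)) = \cos\theta(t)$ where $\theta(t)$ is the angle between $V(t)$ and the gradient $\nabla B^M_\xi(c(t))$; differentiating $\cos\theta$ introduces a factor $\sin\theta \le 1$ but more importantly one can decompose $V(t) = \cos\theta(t)\, \nabla B^M_\xi + \sin\theta(t)\, W(t)$ with $W \perp \nabla B^M_\xi$, and then only the component of $\dot c$ orthogonal to $\nabla B^M_\xi$ contributes to $\mathrm{Hess}\, B^M_\xi(\dot c, V)$ since the Hessian vanishes in the gradient direction. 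Writing $\dot c = \cos\psi\, \nabla B^M_\xi + \sin\psi\, Z$ similarly, the relevant term is $\cos\theta \sin\theta \sin\psi \cdot \mathrm{Hess}\, B^M_\xi(Z, W)$, and $|\cos\theta\sin\theta| \le 1/2$ while $|\mathrm{Hess}\, B^M_\xi(Z,W)| \le \Lambda$. This yields the bound $\Lambda/2$.

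The main obstacle I anticipate is pinning down this factor of $\tfrac12$ cleanly: one must be careful that $\mathrm{Hess}\, B^M_\xi$ genuinely annihilates the gradient direction (true, since $B^M_\xi$ has unit gradient, so $\mathrm{Hess}\, B^M_\xi(\nabla B^M_\xi, \cdot) = 0$) and that the trigonometric bookkeeping with the two angles $\theta, \psi$ is done correctly. Everything else — the product rule computation, the isometry property of parallel transport, and the curvature comparison bound $\|\mathrm{Hess}\, B^M_\xi\| \le \Lambda$ — is routine given the curvature hypothesis on $(M,g)$ and the formula for the Hessian of a Busemann function already invoked in Lemma \ref{kappag}.
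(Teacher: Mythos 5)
Your product-rule computation is correct and coincides with the paper's opening step: since $V$ is parallel, $\tfrac{d}{dt}\,dB_{c(t),\xi}(V(t)) = \mathrm{Hess}\,B_{p,\xi}(c'(0), v)$, and since the Hessian annihilates $\nabla B_{\xi}$ in both slots you may replace $c'(0)$ and $v$ by their components tangent to the horosphere. Up to this point you and the paper agree.

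The step that breaks is the trigonometric bookkeeping. Writing $\dot c = \cos\psi\, \nabla B_\xi + \sin\psi\, Z$ and $V = \cos\theta\, \nabla B_\xi + \sin\theta\, W$ with $Z, W$ unit and horospherical, the bilinearity of $\mathrm{Hess}\,B_\xi$ together with $\mathrm{Hess}\,B_\xi(\nabla B_\xi, \cdot) = \mathrm{Hess}\,B_\xi(\cdot, \nabla B_\xi) = 0$ gives
\[
\mathrm{Hess}\,B_\xi(\dot c, V) = \sin\psi\,\sin\theta\;\mathrm{Hess}\,B_\xi(Z, W),
\]
with \emph{no} $\cos\theta$ factor. (One cross term carries $\cos\psi\sin\theta$, the other $\sin\psi\cos\theta$, and both vanish; only the $\sin\psi\sin\theta$ term survives.) Since $|\sin\psi\sin\theta| \le 1$ rather than $\le 1/2$, this route only yields the Lipschitz constant $\Lambda$, which is exactly the estimate you had before trying to refine. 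The factor of $\tfrac12$ does not come from a $\cos\theta\sin\theta$ term, so your proposed refinement has a genuine gap.

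The paper's route is different. Rather than try to exploit the angles, it uses the polarization inequality for the positive semidefinite form $\mathrm{Hess}\,B_{p,\xi}$ restricted to $\nabla B_{p,\xi}^\perp$:
\[
4\,\mathrm{Hess}\,B_{p,\xi}(c'(0)^T, v^T) \le \mathrm{Hess}\,B_{p,\xi}\bigl(c'(0)^T + v^T,\; c'(0)^T + v^T\bigr),
\]
and then bounds the right-hand side by writing $v' = c'(0)^T + v^T$ (with $\|v'\| \le 2$), producing a Jacobi field $J$ along the geodesic toward $\xi$ with $J(0) = v'$, $J'(0) = \nabla_{v'}\,\mathrm{grad}\,B_{p,\xi}$, so that $\mathrm{Hess}\,B_{p,\xi}(v',v') = \langle J'(0), J(0)\rangle$. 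This inner product is then controlled by the comparison estimate of Brin--Karcher (cited as [4.2]), which uses both curvature bounds $-\Lambda^2 \le K \le -\lambda^2$ and is sharper than the crude operator-norm bound $\|\mathrm{Hess}\,B_\xi\| \le \Lambda$. So what the paper actually does to gain the factor of $2$ is a polarization identity followed by a quantitative Jacobi-field comparison, not an elementary trigonometric estimate. If you want to repair your proof along your original lines, you would need to replace the incorrect $\cos\theta\sin\theta$ step with something in the spirit of the paper's Jacobi-field bound.
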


\begin{proof}
We have
\[ \frac{d}{dt} |_{t=0} dB_{c(t), \xi}(P_{c(t)} v) =  {\rm Hess} B_{p, \xi} (c'(0), v) = {\rm Hess} B_{p, \xi} (c'(0)^T, v^T),\]
where $c'(0)^T$ and $v^T$ are the components of $c'(0)$ and $v$ in the direction tangent to the horosphere through $p$ and $\xi$.
%justification for the above equalities:
%\begin{align*}
%\frac{d}{dt} |_{t=0} dB_{c(t), \xi}(P_{c(t)} v) &= \frac{d}{dt} |_{t=0} \langle {\rm grad} B_{c(t), \xi}, P_{c(t)} v \rangle \\
%&=  \frac{d}{dt} |_{t=0} \langle P_{c(t)}^{-1} {\rm grad} B_{c(t), \xi}, v \rangle \tag{parallel transport is an isometry} \\
%&= \langle \frac{d}{dt} |_{t=0} P_{c(t)}^{-1} {\rm grad} B_{c(t), \xi}, v \rangle \tag{product rule}\\
%&= \langle \nabla_{c'(0)} {\rm grad} B_{p, \xi}, v \rangle \tag{connection in terms of parallel transport}\\
%&= {\rm Hess} B_{p, \alpha} (c'(0), v).
%\end{align*}
%
%If $c'(0)$ is normal to the horosphere through $p$ and $\xi$ then $\nabla_{c'(0)} {\rm grad} B_{p, \alpha} = 0$. 
%Since ${\rm Hess} B_{p, \xi}$ is a symmetric bilinear form, 
%it follows that ${\rm Hess} B_{p, \xi} (c'(0), u) = {\rm Hess} B_{p, \xi} (c'(0)^T, u^T)$, 
%where $c'(0)^T$ and $u^T$ are the components of $c'(0)$ and $u$ in the direction tangent to the horosphere through $p$ and $\xi$. 
Using that ${\rm Hess} B_{p, \xi}$ is bilinear and positive definite on ${\rm grad} B_{p, \xi}^{\perp}$, we obtain
\[ 4 {\rm Hess} B_{p, \xi} (c'(0)^T, v^T) \leq {\rm Hess} B_{p, \xi} (c'(0)^T +  v^T, c'(0)^T + v^T). \]

%Using bilinearity, we have
%\begin{align*}
%4 {\rm Hess} B_{p, \xi} (c'(0)^T, u^T) &= {\rm Hess} B_{p, \xi} (c'(0)^T +  u^T, c'(0)^T + u^T) - {\rm Hess} B_{p, \xi} (c'(0)^T -  u^T, c'(0)^T - u^T) \\
%&\leq  {\rm Hess} B_{p, \xi} (c'(0)^T +  u^T, c'(0)^T + u^T),
%\end{align*}
%where the inequality is due to the fact ${\rm Hess} B_{p, \xi}$ is positive definite on ${\rm grad} B_{p, \xi}^{\perp}$. 

Let $v' = c'(0)^T +  v^T$ and note $\Vert v' \Vert \leq 2$. 
Let $\beta(s)$ be a curve in the horosphere such that $\beta'(0) = v'$. 
Consider the geodesic variation $j(s, t) = \exp_{\beta(s)} (t {\rm grad} B_{\beta(s), \xi})$ 
and let $J(t) = \frac{d}{ds} |_{s=0} j(s,t)$ be the associated Jacobi field. 
Then $J(0) = v'$ and $J'(0) = \nabla_{v'} {\rm grad} B_{p, \xi}$. 
%(see the proof of \cite[Proposition 3.6]{feres96}). 
This means
\[ {\rm Hess} B_{p, \xi} (c'(0)^T +  v^T, c'(0)^T + v^T) = \langle J'(0), J(0) \rangle. \]
Let $\chi = \frac{1}{2} \sqrt{a^2 + b^2}$. According to \cite[4.2]{brinkarcher},
\[ \langle J'(0), J(0) \rangle \leq \langle J'(0) + \chi J(0), J(0) \rangle \leq |J(0)| (\chi - a). \]
Since $|J(0)| = |v'| \leq 2$, we get $4 \frac{d}{dt} |_{t=0} dB_{c(t), \xi}(P_{c(t)} v) \leq \langle J'(0), J(0) \rangle \leq 2 b$. 
\end{proof}

\begin{lem} \label{lipBN}
The function
$t \mapsto dB^N_{F(c(t)), \xi} (P_{F(c(t))} u)$ is $(\frac{h(g)}{\kappa} + 1)$-Lipschitz for all $u \in T^1_{F(p)} \tilde N$. 
\end{lem}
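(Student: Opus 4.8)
The plan is to treat this as the exact analogue of Lemma~\ref{lipgradB}, with the unit-speed geodesic $c$ in $\tilde M$ replaced by the curve $\gamma(t) := F(c(t))$ in $\tilde N$. First I would record that $\gamma$ is $C^2$ (since $F$ is $C^2$ and $c$ is smooth) and that its speed is controlled: because $c$ is unit speed, $\|\gamma'(t)\| = \|dF_{c(t)}(c'(t))\| \le \|dF_{c(t)}\| \le h(g)/\kappa$ by Lemma~\ref{lipF}. Writing $U(t) = P_{F(c(t))} u$, which by definition is parallel along $\gamma$, the product rule (exactly as at the start of the proof of Lemma~\ref{lipgradB}) gives
\[
\frac{d}{dt}\, dB^N_{F(c(t)),\xi}(U(t)) = {\rm Hess}\, B^N_{\gamma(t),\xi}\big(\gamma'(t),\, U(t)\big),
\]
since the term involving $\nabla_{\gamma'(t)} U(t)$ vanishes. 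So everything reduces to bounding this Hessian pairing by $h(g)/\kappa + 1$, uniformly in $\xi$.

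For that I would mimic the estimate in the proof of Lemma~\ref{lipgradB}, now carried out on $\tilde N$, whose sectional curvatures lie in $[-4,-1]$ after the normalization of $g_0$ (so the roles of $\lambda,\Lambda$ are played by $1,2$). The Hessian of the Busemann function vanishes in the direction of ${\rm grad}\, B^N_\xi$, so one may replace $\gamma'(t)$ and $U(t)$ by their components $\gamma'(t)^T$, $U(t)^T$ tangent to the horosphere through $\gamma(t)$ determined by $\xi$. Since ${\rm Hess}\, B^N_\xi$ is positive semidefinite, polarization gives $4\,{\rm Hess}\, B^N_\xi(\gamma'(t)^T, U(t)^T) \le {\rm Hess}\, B^N_\xi(v',v')$ for $v' = \gamma'(t)^T \pm U(t)^T$, and then the Jacobi-field/Brin--Karcher argument from Lemma~\ref{lipgradB} (via \cite[4.2]{BrinKarcher}) bounds ${\rm Hess}\, B^N_\xi(v',v')$ in terms of $\|v'\|$. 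The point is that
\[
\|v'\| \le \|\gamma'(t)^T\| + \|U(t)^T\| \le \|\gamma'(t)\| + \|U(t)\| \le \frac{h(g)}{\kappa} + 1,
\]
which is precisely where the stated constant comes from; integrating the resulting derivative bound in $t$ finishes the proof.

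I do not expect a genuine obstacle: the only inputs not already present in Lemma~\ref{lipgradB} are the speed bound $\|\gamma'(t)\| \le h(g)/\kappa$ from Lemma~\ref{lipF} and the curvature normalization of $\tilde N$. The one thing to be careful about is that, at this stage of the argument, $dF_p$ is not yet known to be nonsingular (invertibility of $dF$ is part of the conclusion of Theorem~\ref{mainthm}), so one cannot reparametrize $\gamma$ by arc length and apply an already-proven $\tilde N$-version of Lemma~\ref{lipgradB} as a black box; one has to run the Hessian estimate directly on the possibly non-regular curve $F\circ c$, which is exactly why the bound takes the form $h(g)/\kappa + 1$ (the ``$+1$'' being $\|U(t)\|$) rather than the sharper $h(g)/\kappa$ that a reparametrization would produce. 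The remaining work is routine bookkeeping of constants in the Brin--Karcher step.
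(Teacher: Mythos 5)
Your proposal matches the paper's proof essentially line for line: both run the computation of Lemma~\ref{lipgradB} on $\tilde N$ with the curvature normalization $[-4,-1]$ (so $\chi - \lambda < 1$), reduce $\frac{d}{dt}\,dB^N_{F(c(t)),\xi}(P_{F(c(t))}u)$ to the Hessian pairing ${\rm Hess}\,B^N_{F(p),\xi}(dF_p(c'(0)),u)$, bound that via the polarization and Brin--Karcher step, and then use $\|dF_p\| \le h(g)/\kappa$ from Lemma~\ref{lipF} together with $\|u\|=1$ to get the constant $h(g)/\kappa + 1$. Your added observations (that $F\circ c$ need not be a regular curve and that the ``$+1$'' comes from $\|U(t)\|$) are correct glosses on, rather than departures from, the paper's argument.
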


\begin{proof} 
We repeat the same proof as in the previous lemma, but replacing $a^2$ and $b^2$ with $1$ and $4$, respectively. In this case, $\chi - \lambda < 1$. 
This gives
\begin{align*}
\frac{d}{dt}|_{t=0} \, dB^N_{F(c(t)), \xi} (P_{F(c(t))} u) = {\rm Hess} B^N_{F(p), \xi} (dF_p(c'(0)), u) < |dF_p(c'(0)) + u|. 
\end{align*}
Since $c'(0)$ has norm 1, the Lipschitz bound from Lemma \ref{lipF} gives $|dF_p(c'(0)) + u| \leq \frac{h(g)}{\kappa} + 1$, which completes the proof.
\end{proof}

\begin{lem}\label{detKdF}
The function $t \mapsto \det K_{F(c(t))} {\rm Jac} F(c(t))$ on the interval $0 \leq t \leq T_0$ is $L_2$-Lipschitz, where $L_2$ depends only on $\eps, n, \Gamma, b, A, D$, $T_0$.
\end{lem}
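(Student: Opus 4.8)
The plan is to reduce the statement to a Lipschitz bound for a single auxiliary quantity built directly from the defining equation (\ref{impder}), thereby avoiding any need to control $K^{-1}$. Define the linear map $L_p : T_p \tilde M \to T_{F(p)} \tilde N$ by $\langle L_p v, u\rangle = \int_{\partial \tilde M} dB^N_{F(p), \overline f(\xi)}(u)\, dB^M_{p,\xi}(v)\, d\mu_p(\xi)$, so that (\ref{impder}) reads $K_{F(p)} \circ dF_p = h(g)\, L_p$ as maps $T_p \tilde M \to T_{F(p)} \tilde N$. Taking determinants in $g$- and $g_0$-orthonormal bases gives $\det K_{F(p)} \cdot {\rm Jac}F(p) = h(g)^n \det L_p$. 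Since $h(g)^n$ is a fixed constant bounded above by $\big((1-\eps)^{-1} h(g_0)\big)^n$ via Lemma \ref{entropy}, and $h(g_0)$ depends only on $n$ and $\Gamma$, it suffices to show $t \mapsto \det L_{c(t)}$ is Lipschitz on $[0, T_0]$ with constant depending only on $\eps, n, \Gamma, \Lambda, T_0$.

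Next I would compute the matrix of $L_{c(t)}$ in parallel frames. Writing $E_i(t) = P_{c(t)} e_i$ and $U_j(t) = P_{F(c(t))} u_j$ for orthonormal bases $\{e_i\}$ of $T_p \tilde M$ and $\{u_j\}$ of $T_{F(p)} \tilde N$, and using the Patterson--Sullivan transformation rule $d\mu_{c(t)}(\xi) = \exp[-h(g) B^M_\xi(p, c(t))]\, d\mu_p(\xi)$ (already invoked in the proof of Lemma \ref{lipK}), the entry $(L_{c(t)})_{ji}$ becomes an integral over $\partial \tilde M$ against the fixed probability measure $\mu_p$ of the product of three factors: $dB^N_{F(c(t)), \overline f(\xi)}(U_j(t))$, $dB^M_{c(t), \xi}(E_i(t))$, and $\exp[-h(g) B^M_\xi(p, c(t))]$. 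The first factor is bounded by $1$ in absolute value and is $(h(g)/\kappa + 1)$-Lipschitz in $t$ by Lemma \ref{lipBN}; the second is bounded by $1$ and $(\Lambda/2)$-Lipschitz by Lemma \ref{lipgradB}; and since $|B^M_\xi(p, c(t))| \le d(p, c(t)) \le T_0$ and $t \mapsto B^M_\xi(p, c(t))$ is $1$-Lipschitz (Busemann functions have gradient $1$), the third factor lies in $[e^{-h(g) T_0}, e^{h(g) T_0}]$ and is $h(g) e^{h(g) T_0}$-Lipschitz. All of these bounds are uniform in $\xi$ and in the indices $i,j$, and depend only on $\eps, n, \Gamma, \Lambda, T_0$ once we feed in $\kappa$ from Proposition \ref{Klower} (using $\lambda \le \Lambda$) and the bound on $h(g)$ above.

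To finish, I would record the elementary facts that a product of uniformly bounded, uniformly Lipschitz functions on $[0, T_0]$ is again uniformly bounded and uniformly Lipschitz, and that integrating in $\xi$ against a probability measure preserves both properties; this makes each entry $t \mapsto (L_{c(t)})_{ji}$ bounded and Lipschitz with the desired constants. Since $\det L_{c(t)}$ is a fixed polynomial (with integer coefficients) in these entries and the entries stay in a bounded range, $t \mapsto \det L_{c(t)}$ is Lipschitz with constant depending only on $n$ and the entry bounds, hence only on $\eps, n, \Gamma, \Lambda, T_0$; multiplying by $h(g)^n$ and using the identity $\det K_{F(c(t))} \cdot {\rm Jac}F(c(t)) = h(g)^n \det L_{c(t)}$ yields the claim. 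There is no serious obstacle here — the content is bookkeeping, assembling the bounded-times-Lipschitz estimates from Lemmas \ref{lipgradB}, \ref{lipBN}, and \ref{entropy} and pushing them through the determinant — so the step worth flagging is the initial reduction via $\det K \cdot {\rm Jac}F = h(g)^n \det L$, which circumvents controlling the far less tractable inverse of $K$.
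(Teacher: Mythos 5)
Your proposal is correct and is essentially the same argument the paper gives: both work directly with the composite $K_{F(c(t))} \circ dF_{c(t)}$ via the implicit formula (\ref{impder}), observe that each matrix entry is a $\mu_p$-integral of a product of three bounded, Lipschitz-in-$t$ factors (the two Busemann differentials controlled by Lemmas \ref{lipBN} and \ref{lipgradB}, and the Patterson--Sullivan density factor), and then pass to the determinant using that a polynomial in bounded Lipschitz functions is Lipschitz. Factoring out $h(g)$ to name the auxiliary operator $L_p$ is a cosmetic reorganization rather than a different route; the paper's proof avoids $K^{-1}$ for exactly the same reason you flag.
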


\begin{proof} 
Consider the function
\begin{align*}
t &\mapsto %\langle K_{F(c(t))} d F_{c(t)} (P_{c(t)} v), P_{F(c(t))} u \rangle\\
%&= 
 h(g) \int_{\partial \tilde M} dB^N_{F(c(t)), f(\xi)}
(P_{F(c(t))} u)  dB^M_{c(t), \xi} (P_{c(t)} v) e^{-h(g) B_{\xi} (p, c(t))} d \mu_p (\xi).
\end{align*}
The first two terms in the integrand are bounded by 1 in absolute value. The third term is bounded above by a constant depending only on  $\eps, n, T_0$ as in the proof of Lemma \ref{lipK} and $h(g) \leq (1 + \eps) h(g_0)$ by Lemma \ref{entropy}. 
Moreover, the three terms in the integrand are each Lipschitz -- the first two by Lemmas \ref{lipBN} and \ref{lipgradB}, respectively, and the last one as in the proof of Lemma \ref{lipK}.
Since the entries of the matrix $K_{F(c(t))}(dF_{c(t)})$ are bounded and Lipschitz, the determinant of this matrix is also Lipschitz.
\end{proof}

\begin{prop}\label{jlip}
The function $p \mapsto |{\rm Jac} F(p)|$ is $L$-Lipschitz, where the constant $L$ depends only on $\eps_2 , n, \Gamma, b, A, D$. 
\end{prop}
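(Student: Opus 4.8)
The plan is to express $|{\rm Jac}F|$, restricted to a minimizing geodesic, as a ratio of two Lipschitz functions whose denominator is bounded away from zero, and to use the diameter estimate of Lemma \ref{diam} to keep every constant uniform. First I would fix $p, q \in M$ and let $c\colon [0,T_0] \to M$ be a unit-speed minimizing geodesic with $c(0) = p$ and $c(T_0) = q$, so that $T_0 = d(p,q) \le {\rm diam}(M) \le D_0$, where $D_0 = D_0(\eps, n, \Gamma)$ is the bound furnished by Lemma \ref{diam}; extend the relevant vectors along $c$ by parallel transport exactly as in Lemmas \ref{lipK}--\ref{detKdF}. Since $\det(K_{F(c(t))}\,dF_{c(t)}) = \det K_{F(c(t))}\cdot {\rm Jac}F(c(t))$, setting
\[ N(t) := \bigl| \det K_{F(c(t))}\,{\rm Jac}F(c(t)) \bigr|, \qquad D(t) := \det K_{F(c(t))}, \]
we have $|{\rm Jac}F(c(t))| = N(t)/D(t)$ for all $t \in [0,T_0]$.

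Next I would collect the bounds on $N$ and $D$ that make the quotient rule applicable. By Lemma \ref{detKdF}, applied with $T_0 \le D_0$ (so that its constant depends only on $\eps, n, \Gamma, \Lambda$), the function $N$ is $L_2$-Lipschitz on $[0,T_0]$, and by Corollary \ref{detK} the function $D$ is $L_1$-Lipschitz there. Proposition \ref{Klower} gives $\langle K_{F(p)}u,u\rangle \ge \kappa$ for every unit $u$, so every eigenvalue of $K_{F(p)}$ is at least $\kappa$ and hence $D(t) \ge \kappa^n > 0$, with $\kappa = \kappa(\eps, n, \Gamma, \lambda, \Lambda)$. On the other hand, formula (\ref{sshess}) together with the normalization that the sectional curvatures of $\tilde N$ lie in $[-4,-1]$ gives ${\rm Hess}\,B^N_{F(p),\xi}(u) \le 2$ for unit $u$, hence $\Vert K_{F(p)} \Vert \le 2$ and $D(t) \le 2^n$; combining this with ${\rm Jac}F(p) \le (h(g)/h(g_0))^n \le (1+\eps)^n$ from Lemma \ref{jacbd} and Lemma \ref{entropy}, we obtain $N(t) \le 2^n(1+\eps)^n$.

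Finally I would run the elementary quotient estimate: for all $s,t \in [0,T_0]$,
\[ \bigl| |{\rm Jac}F(c(s))| - |{\rm Jac}F(c(t))| \bigr| = \Bigl| \frac{N(s)}{D(s)} - \frac{N(t)}{D(t)} \Bigr| \le \frac{|N(s)-N(t)|}{D(s)} + N(t)\,\frac{|D(t)-D(s)|}{D(s)\,D(t)} \le \Bigl( \frac{L_2}{\kappa^n} + \frac{2^n(1+\eps)^n L_1}{\kappa^{2n}} \Bigr)|s-t|. \]
Setting $L := \frac{L_2}{\kappa^n} + \frac{2^n(1+\eps)^n L_1}{\kappa^{2n}}$, which depends only on $\eps, n, \Gamma, \Lambda$, and specializing to $s=0$, $t=T_0$ yields $\bigl| |{\rm Jac}F(p)| - |{\rm Jac}F(q)| \bigr| \le L\,d(p,q)$; since $p,q$ were arbitrary this is the claim.

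I do not expect a genuine obstacle here: the two substantive ingredients -- the uniform lower bound $\kappa$ for $K$ (Proposition \ref{Klower}, proved via the Gromov-type compactness of the relevant space of metrics) and the Lipschitz bounds for numerator and denominator (Lemma \ref{detKdF} and Corollary \ref{detK}) -- are already established. The only points that need care are (i) keeping $T_0$, and therefore $L_1$ and $L_2$, uniformly bounded, which is precisely the role of the diameter estimate of Lemma \ref{diam} (a function that is $L$-Lipschitz along every minimizing geodesic of a geodesic space is globally $L$-Lipschitz), and (ii) supplying the elementary upper bounds on $\det K$ and on ${\rm Jac}F$ needed to apply the quotient rule.
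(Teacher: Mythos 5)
Your proposal is correct and takes essentially the same route as the paper: both express $|{\rm Jac}\,F|$ as a ratio with $\det K_{F(p)}\cdot{\rm Jac}\,F(p)$ in the numerator and $\det K_{F(p)}$ in the denominator, invoke Proposition~\ref{Klower} for the lower bound $\det K \ge \kappa^n$, Corollary~\ref{detK} and Lemma~\ref{detKdF} for the Lipschitz constants along geodesics, Lemmas~\ref{jacbd} and~\ref{entropy} for the upper bound on $|{\rm Jac}\,F|$, and Lemma~\ref{diam} to cap $T_0$ uniformly. The paper organizes the algebra slightly differently (multiplying the difference $|{\rm Jac}\,F(p)-{\rm Jac}\,F(q)|$ by $\kappa^n$ and applying a single triangle-inequality split, which avoids needing an upper bound on $\det K$ and yields a marginally sharper constant), but this is a cosmetic rearrangement of the same quotient estimate, and your auxiliary bound $\det K \le 2^n$ via (\ref{sshess}) is a correct and harmless extra ingredient.
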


\begin{proof}
Since $K_{F(p)}$ is a symmetric matrix, it has an orthonormal basis of eigenvectors $u_i$. 
Moreover, $\langle K_{F(p)} u_i, u_i \rangle \geq \kappa \langle u_i, u_i \rangle$ by Proposition \ref{Klower}. It follows that $\det K_{F(p)} \geq \kappa^n$. Using this, we obtain
\begin{align*}
\kappa^n |{\rm Jac} F(p) - {\rm Jac} F(q)| &\leq |\det K_{F(p)} {\rm Jac} F(p) - \det K_{F(p)} {\rm Jac} F(q) |  \\
%&\leq  | \det K_{F(p)} Jac F(p) -  \det K_{F(q)} Jac F(p)| + | \det K_{F(p)} Jac F(p) - \det K_{F(p)} Jac F(q) | \\
&\leq L_2 d(p, q) + |{\rm Jac} F(q)| | \det K_{F(p)} - \det K_{F(q)}| \tag{Lemma \ref{detKdF}} \\
&\leq L_2 d(p,q) + (1 + \eps_2)^n h(g_0) L_1 d(p,q),
\end{align*}
where the last inequality follows from Corollary \ref{detK} and Lemmas \ref{jacbd} and \ref{entropy}.
Moreover, Corollary \ref{detK} and Lemma \ref{detKdF} imply $L_1$ and $L_2$ depend only on $\eps_2, n, \Gamma, b, A, D$. Proposition \ref{Klower} states $\kappa$ depends only on $\eps, n, \Gamma$. 
\end{proof}

\subsection{Lower bound for $|{\rm Jac} F(p)|$}

Now that we have a Lipschitz bound for ${\rm Jac}F(p)$, we can use (\ref{eq:approxint})
%the fact that $(M, g)$ and $(N, g_0)$ have approximately equal volumes (Theorem \ref{thm:volest}) and approximately equal entropies (Lemma \ref{entropy}) 
to show equality almost holds in the inequality ${\rm Jac}F(p) \leq \left( {h(g)}/{h(g_0)} \right)^n$ (Lemma \ref{Jlower}).
Note that Lemma \ref{Jlower} together with the following proposition both require $n \geq 3$.

\begin{prop}\label{jacalmost}
Let $(M, g)$, $(N, g_0)$ and $\eps_0, \eps_1, \eps_2, \delta > 0$ as in Theorem \ref{thm:BCGstab}. 
Then there is sufficiently small $\eps_0 = \eps_0(n, \Gamma)$ and $C = C(n, \Gamma, b, A, D)$
%constant depends on whatever the Lipschitz bound for the Jacobian depends on
 so that
\[ (1 - C \delta^{1/2n})  \left( \frac{h(g)}{h(g_0)} \right)^n \leq |{\rm Jac} F(p)| \]
for all $p \in \tilde M$.
%In particular, there is a constant $C = C(n, \Gamma, \Lambda)$ so that $\beta = 1 - C \eps^{1/(n+1)} + O(\eps^{2/(n+1)})$. 
\end{prop}

We need two preliminary lemmas. Let $\nu$ denote the measure on $M$ coming from the Riemannian volume. 

\begin{lem}\label{analysis}
Let $\phi: M \to \R$ be a $\nu$-measurable function such that $\phi \geq 0$. Suppose the integral of $\phi$ satisfies $0 \leq \int_M \phi \leq \delta$. 
Let $B = \{ x \in M \, | \, \phi > \omega \}$ where $\omega$ is some constant. Then $\nu(B) \leq \delta/\omega$. 
\end{lem}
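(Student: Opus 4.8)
The plan is to recognize this as an instance of Markov's (Chebyshev's) inequality and prove it by a direct three-line estimate. The only standing assumption we will use beyond those stated is that $\omega > 0$ (so that dividing by $\omega$ is legitimate); this is implicit since otherwise $B = M$ and the statement is vacuous or trivial.

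First I would split the integral over $M$ using that $\phi \geq 0$ everywhere, so that discarding the contribution from the complement of $B$ only decreases the integral:
\[ \int_M \phi \, d\nu \;\geq\; \int_B \phi \, d\nu. \]
Next, on the set $B = \{ \phi > \omega \}$ we bound the integrand from below by the constant $\omega$, which gives
\[ \int_B \phi \, d\nu \;\geq\; \omega \, \nu(B). \]
Chaining these two inequalities together with the hypothesis $\int_M \phi \, d\nu \leq \delta$ yields $\omega \, \nu(B) \leq \delta$, and dividing by $\omega$ gives the claimed bound $\nu(B) \leq \delta/\omega$.

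There is no real obstacle here: the result is a completely standard measure-theoretic estimate, and the measurability of $\phi$ guarantees $B$ is a measurable set so that $\nu(B)$ is defined. The lemma is stated separately only because it will be applied to $\phi(p) = \left( h(g)/h(g_0) \right)^n - |{\rm Jac}\, F(p)|$ in the proof of Proposition \ref{jacalmost}, where the integral bound comes from comparing $\int_M |{\rm Jac}\, F|$ with $(h(g)/h(g_0))^n {\rm Vol}(M)$ via Theorem \ref{volumeest} and Lemma \ref{entropy}.
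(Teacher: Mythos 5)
Your proof is correct and takes essentially the same approach as the paper: both chain the inequalities $\omega\,\nu(B) \leq \int_B \phi \leq \int_M \phi \leq \delta$, i.e.\ the standard Markov/Chebyshev argument.
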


\begin{proof}
Note that
$\omega \, \nu(B) \leq \int_B \phi \leq \int_X \phi \leq \delta$,
which gives the desired bound.
\end{proof}

\begin{lem}\label{density}
Let $i_M$ denote the injectivity radius of $M$ and let $c(n)$ denote the volume of the unit ball in $\R^n$. Fix $\delta < c(n) (i_M)^n$.
Let $B \subset M$ be an open set with $\nu(B) < \delta$. 
Then there is $r = r(\delta)$  such that for any $p \in B$ there is $q \in M \setminus B$ with $d(p,q) \leq r$. 
Moreover, $r \leq c(n)^{-1/n} \delta^{1/n}$. 
\end{lem}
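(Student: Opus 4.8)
The goal is to show Lemma \ref{density}: given an open set $B \subset M$ with $\nu(B) < \delta$, for $\delta$ below the threshold $c(n)(i_M)^n$, every point of $B$ is within a controlled distance $r = c(n)\delta^{1/n}$ of a point in the complement. The plan is to argue by contradiction via a volume comparison. Suppose $p \in B$ has the property that the entire metric ball $B(p,r)$ is contained in $B$. Then $\nu(B(p,r)) \leq \nu(B) < \delta$.

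On the other hand, since $(M,g)$ is negatively curved, the Bishop--Günther (or Günther) volume comparison inequality gives a \emph{lower} bound for the volume of a metric ball of radius $r$ in terms of the corresponding ball in Euclidean space, provided $r$ is at most the injectivity radius: $\nu(B(p,r)) \geq c(n) r^n$ where $c(n)$ is the volume of the unit ball in $\mathbb{R}^n$. (This is exactly the estimate already invoked in the proof of Lemma \ref{diam}, via \cite[Theorem 3.101 ii)]{gallot}.) So I first need to ensure $r \leq i_M$; this is where the hypothesis $\delta < c(n)(i_M)^n$ enters: setting $r := (\delta/c(n))^{1/n}$, we get $r < i_M$, so the comparison applies.

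Combining the two bounds: if $B(p,r) \subset B$ then $c(n) r^n \leq \nu(B(p,r)) \leq \nu(B) < \delta = c(n) r^n$, a contradiction. Hence $B(p,r)$ is not contained in $B$, i.e. there is some $q \in B(p,r) \cap (M \setminus B)$, so $d(p,q) < r$. This $r$ depends only on $\delta$ (and $n$), and has the form $r = c(n)\delta^{1/n}$ after renaming the dimensional constant $c(n)^{-1/n}$ to $c(n)$ as the statement does (a mild abuse of notation consistent with the paper's phrasing).

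There is essentially no hard step here; the only point requiring care is checking the radius $r$ stays within the injectivity radius so that the Günther volume lower bound is valid, which is precisely guaranteed by the assumption $\delta < c(n)(i_M)^n$. One should also note that $B$ being open is what makes "$B(p,r) \not\subset B$ yields a complement point in $\overline{B(p,r)}$" clean — in fact openness is not even strictly needed since we directly produce a point of $M \setminus B$ inside the open ball. I would write the proof in roughly four or five lines along exactly these lines.

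\begin{proof}
Set $r := (\delta / c(n))^{1/n}$, so that $c(n) r^n = \delta$. Since $\delta < c(n) (i_M)^n$ by hypothesis, we have $r < i_M$. Now fix $p \in B$ and consider the metric ball $B(p, r) \subset M$. Because $M$ is negatively curved and $r < i_M$, the G\"unther volume comparison inequality \cite[Theorem 3.101 ii)]{gallot} gives
\[ \nu(B(p,r)) \geq c(n) r^n = \delta. \]
If $B(p,r)$ were entirely contained in $B$, we would get $\delta \leq \nu(B(p,r)) \leq \nu(B) < \delta$, a contradiction. Hence $B(p,r) \not\subset B$, so there exists $q \in B(p,r)$ with $q \in M \setminus B$; this $q$ satisfies $d(p,q) < r$. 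Finally, $r = (\delta/c(n))^{1/n}$, which (after absorbing the dimensional constant $c(n)^{-1/n}$ into the notation $c(n)$, as in the statement) is of the form $r = c(n) \delta^{1/n}$, depending only on $\delta$ and $n$.
\end{proof}
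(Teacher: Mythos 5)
Your proof is correct and uses the same core idea as the paper — Günther's volume comparison applied to a ball contained in $B$. The only organizational difference is that the paper defines $r$ as the minimum distance from $p$ to $M \setminus B$ and then needs a case split ($r \le i_M$ versus $r > i_M$), while you fix $r = (\delta/c(n))^{1/n}$ in advance (automatically below $i_M$ by the hypothesis on $\delta$) and derive the contradiction directly, which is a slightly cleaner presentation of the same argument.
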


\begin{proof}
Let $p \in B$. Let $q \in M \setminus B$ be the point such that $d(p,q) = \min_{x \in M \setminus B} d(p, x)$. Let $r = d(p, q)$. Then the open ball $B(p,r)$ is contained in the set $B$. 
We consider the cases $r \leq i_M$ and $r > i_M$ separately:

In the case $r \leq i_M$, we can apply Theorem 3.101 ii) in \cite{gallot} to obtain the inequality ${\rm Vol} B(p, r) \geq c(n) r^n$,
where $c(n)$ is the volume of the unit ball in $\mathbb{R}^n$.  
Since $B(p, r) \subset B$, 
this gives $r^n \leq \frac{\delta}{c(n)}$.

In the case $r > i_M$, we do not have the above volume estimate for the ball $B(p,r)$. However, $B(p, i_M) \subset B(p, r) \subset B$ so the same argument as in the first case gives a bound $(i_M)^n \leq \frac{\delta}{c(n)}$. This is a contradiction for $\delta$ as small as in the statement of the lemma, so we must be in the first case.
\end{proof}

\begin{proof}[Proof of Proposition \ref{jacalmost}]
Let $\phi(p) = \left( {h(g)}/{h(g_0)} \right)^n - |{\rm Jac} F(p)| \geq 0$.
Then, by Lemma \ref{Jlower} and \eqref{eq:vol-ent}, we have
\[ 0 \leq \int_M \phi \leq \delta \left( \frac{h(g)}{h(g_0)} \right)^n {\rm vol}(M).
\] 
Let 
$M_{\sqrt{\delta}} = \left\{ |{\rm Jac} F(p) |  \geq (1 - \sqrt{\delta}) \left( \frac{h(g)}{h(g_0)} \right)^n \right\}. $
Lemma \ref{analysis} gives 
$\nu(M \setminus M_{\sqrt{\delta}}) \leq \sqrt{\delta} {\rm vol}(M)$. By \cite[Theorem 1.1 i)]{BCGnegcurv}, together with $h(g) \leq (1 + \eps_2) h(g_0)$, we have ${\rm vol}(M) \leq (1 + \eps_2) C$, for $C = C(n, \Gamma)$.  
Then, if $\delta$ is sufficiently small (in terms of $n$ and ${\rm inj}(M, g)$), we have
\begin{equation}\label{measbadset}
\nu (M \setminus M_{\sqrt{\delta}}) \leq c(n) ({\rm inj}(M, g))^n,
\end{equation}
so the hypotheses of Lemma \ref{density} are satisfied. 
The lemma gives 
$ r(\delta) 
\leq c(n)^{-1/n} \delta^{1/2n}$ so that for all $p \in M \setminus M_{\alpha}$ there is $q \in M_{\sqrt{\delta}}$ satisfying $d(p,q) < r(\delta)$. 
Applying Proposition \ref{jlip} with $T_0 = r(\delta_0)$, we then have
\[(1 - \sqrt{\delta}) \left( \frac{h(g)}{h(g_0)} \right)^n \leq |{\rm Jac} F(q)| \leq L r(\delta) + |{\rm Jac} F(p)|. \]
for some $L = L(n, \Gamma, b, A , D)$. 
Rearranging and using that $(1 - \eps_2) h(g_0) \leq h(g)$ gives
\[ \big(1 - \sqrt{\delta} -  (1 - \eps_2)^{-n} L r(\delta) \big) \left( \frac{h(g)}{h(g_0)} \right)^n \leq {\rm Jac} |F(p)|, \]
which completes the proof.
\end{proof}

\subsection{Estimates for $\Vert dF_p \Vert$} 
Recall $H_{F(p)}$ and $K_{F(p)}$ are symmetric bilinear forms on $T_{F(p)} \tilde N$ (see (\ref{K}) and (\ref{H})). 
We will use the lower bound we just established for ${\rm Jac}F(p)$ in Proposition \ref{jacalmost} to show $H$ and $K$ are close to scalar matrices. 
This will then allow us to mimic the proof of \cite[Proposition 5.2 ii)]{BCGnegcurv} to find bounds for the derivative of the BCG map that are close to 1.
Note that $n \geq 3$ is assumed throughout this section. 

\begin{prop}\label{HKapprox} 
Let $F: \tilde M \to \tilde N$ be the BCG map and assume there is a constant $\beta > 0$ so that the Jacobian of $F$ satisfies
\begin{equation*}
(1 - \beta)  \left( \frac{h(g)}{h(g_0)} \right)^n \leq |{\rm Jac} F(p)|
\end{equation*}
for all $p \in \tilde M$ (as in the conclusion of Proposition \ref{jacalmost}).
Let $H_{F(p)}$ and $K_{F(p)}$ be the symmetric bilinear forms on $T_{F(p)} \tilde N$ defined in (\ref{K}) and (\ref{H}). Then there is a constant $C = C(n, \Gamma)$, such that
\begin{align*}
(1 - C \beta^{1/2}) \frac{1}{n} \langle v, v \rangle &\leq  \langle H_{F(p)} v, v \rangle \leq  (1 + C \beta^{1/2}) \frac{1}{n} \langle v, v \rangle, \\
(1 - C {\beta^{1/4}}) \frac{h(g_0)}{n} \langle v, v \rangle &\leq  \langle K_{F(p)} v, v \rangle \leq  (1 + C {\beta^{1/4}}) \frac{h(g_0)}{n} \langle v, v \rangle
\end{align*}
for all $p \in M$ and all $v \in T_{F(p)} \tilde N$.
\end{prop}

The lower bound on ${\rm Jac} F(p)$ can be thought of as equality almost holding in Lemma \ref{jacbd}. 
This lower bound, together with the inequalities in Lemmas \ref{jacbda} and \ref{jacbdb}, implies equality almost holds in Lemma \ref{brain}, that is,
\begin{equation}
(1 - \beta)^{\frac{2(n+d-2)}{n-d}} \left( \frac{n^{\alpha-1}}{(n -1)^{\alpha}} \right)^n \leq \frac{\det H}{\det(I - H)^{\alpha}} \leq \left( \frac{n^{\alpha-1}}{(n -1)^{\alpha}} \right)^n,
\end{equation}
where $\alpha = \frac{2(n-1)}{n-d}$. 

In order to prove Proposition \ref{HKapprox}, we will first show that since $(1 - \beta)$ is close to 1, the matrix $H$ is almost $\frac{1}{n} I$. This proves the first part of the proposition. 

\begin{lem}\label{approxBinJ} 
Let $H$ be a symmetric positive definite $n \times n$ matrix with trace 1 for $n \geq 3$. Let $1 < \alpha \leq n-1$ and let $m = \left( \frac{n^{\alpha-1}}{(n-1)^{\alpha}} \right)^n$.
Suppose
$$\frac{\det H}{\det(I - H)^{\alpha}} \geq (1 - \beta)^{\frac{2(n+d-2)}{n-d}} m$$
for $\beta$ as in Proposition \ref{jacalmost}. 
Let $\lambda_i$ denote the eigenvalues of $H$. 
Then there is a constant $C$, depending only on $n$, such that 
\[ (1 - C \sqrt{\beta}) \frac{1}{n} \leq \lambda_i \leq (1 + C \sqrt{\beta}) \frac{1}{n} \]
for $i = 1, \dots, n$.
\end{lem}

\begin{proof}
Since $1 \leq d \leq 8$, there is a universal constant $C$ so that $1 \leq 2(n + d -2)/n-d \leq C$ for all $n \geq 3$; 
hence $(1 - \beta)^{\frac{2(n+d-2)}{n-d}} \geq 1 - \overline{C} \beta$ for some universal constant $\overline{C}$.
It follows from \cite[Proposition B.5]{BCGGAFA} (see \cite{yupingoct} for the Cayley case),
Lemma \ref{jacbda} and Proposition \ref{jacalmost} 
 that there is a constant $B(n) > 0$ so that 
\[ \sum_{i=1}^n \left( \lambda_i - \frac{1}{n} \right)^2 \leq \frac{\overline{C} \beta}{B}. \]
This means $|\lambda_i - 1/n| \leq C \sqrt{\beta}$ for some $C = C(n)$, which completes the proof. 
 \end{proof}

%We will also use the following basic linear algebra fact. 
%\begin{lem}\label{linalg}
%Let $A$ be a symmetric matrix with eigenvalues $\lambda_i$ and eigenvectors $v_i$. Then for any vector $v$ we have 
%$$\min_i \lambda_i \langle v, v \rangle \leq \langle Av, v \rangle \leq \max_i \lambda_i \langle v, v \rangle.$$
%\end{lem}

%\begin{proof}
%Write $v$ as a linear combination of the orthonormal eigenvectors. 
%\end{proof}
Next, we need an analogue of Lemma \ref{approxBinJ} for the arithmetic-geometric mean inequality. 

\begin{lem}\label{approxAMGM}
Let $L$ be a symmetric positive-definite $n \times n$ matrix with $c_1 \leq trace(L) \leq c_2$ for positive constants $c_1, c_2$ depending only on $n, \Gamma$.
Suppose 
\[ \det L \geq (1 - \omega) \left( \frac{1}{n}{\rm trace} L \right)^n \]
for some $\omega > 0$. 
Let $\mu_1, \dots, \mu_n$ denote the eigenvalues of $L$. 
Then there is a constant $C = C(n, \Gamma)$ such that
\[ (1 + C \sqrt{\omega}) \frac{{\rm trace}(L)}{n} \leq \mu_i \leq (1 - C \sqrt{\omega}) \frac{{\rm trace}(L)}{n} \]
for $i = 1, \dots, n$. 
\end{lem}

\begin{proof}
We will use the approach of the proof of \cite[Proposition B5]{BCGGAFA}. 
Let $\phi(\mu_1, \dots, \mu_n) = \log(\mu_1 \cdot \ldots \cdot  \mu_n)$. Since $\phi$ is concave, there is a constant $B > 0$ so that the inequality
\[ \log(\mu_1 \cdot \ldots \cdot \mu_n) \leq \log \left( \frac{{\rm trace}(L)}{n} \right)^n - B \sum_{i=1}^n \left( \mu_i - \frac{{\rm trace}(L)}{n} \right)^2 \]
holds on the set of all $\mu_i \geq 0$ satisfying $\mu_1 + \dots + \mu_n = {\rm trace}(L)$. 
%
%Indeed, $\phi$ achieves a maximum on this set when $\mu_i = {\rm trace}(L)/n$ for all $i$, and this is a critical point for $\phi$. 
%The desired inequality then follows from considering a Taylor expansion of $\phi$ about this critical point. 
The constant $B$ depends only on the function $\phi$. In other words, it does not depend on any topological or geometric properties of the manifolds $M$ and $N$ other than the number $n = \dim M = \dim N$. 

Since $L$ is positive definite, we know $0 < \mu_i < {\rm trace}(L)$ for all $i$. So there exists $T = T(\eps, n, \Gamma, \Lambda)$ such that $B \sum_{i=1}^n \left( \mu_i -  \frac{{\rm trace}(L)}{n} \right)^2 \leq T$. 
Following the same steps as in the proof of \cite[Proposition B.5]{BCGGAFA}, we then obtain
\[ \sum_{i=1}^n \left( \mu_i -  \frac{{\rm trace}(L)}{n} \right)^2 \leq \frac{\omega}{B \frac{1 - e^{-T}}{T}}. \]
%"following the same steps" means exponentiating both sides of the first inequality and also using their claim that $e^{-x} \leq 1 - 1/T(1 - e^{-T}) x$ for $x \in [0,T]$. Here, $x \in [0, T]$ comes from trace being bounded
%
%Let $\delta^2 =  (1 - \alpha)/(B \frac{1 - e^{-T}}{T})$.
%Then we can write $\delta = C \sqrt{1 - \alpha}$ for some $C = C(n, \Gamma, \Lambda)$. 
Using the boundedness assumption $c_1 \leq {\rm trace}(L) \leq c_2$ completes the proof. 
%
%
%Note that the last line in the above proof implies the ratio $\mu_i / {\rm trace}(L)/n$ is between $1 - L \delta$ and $1 + L \delta$, where $L$ is some constant depending only on $n, \Gamma, b, a$ (it's actually simply $L = n/ {\rm trace}(L)$). So in terms of $c$ alone, the ratio is between $1 - L \sqrt{1-c}$ and $1 + L \sqrt{1-c}$. 
\end{proof}

\begin{proof}[Proof of Proposition \ref{HKapprox}]
First, note that by \cite[Proposition B1]{BCGGAFA} and Lemma \ref{approxBinJ}, there is a constant $C = C(n, \Gamma)$ so that $\det K \geq (1 - C \sqrt{\beta}) (h(g_0)/n)^n$. 
So equality almost holds in the arithmetic-geometric mean inequality. 
By Lemma \ref{approxAMGM}, the eigenvalues of $K$ are between $(1 - C {\beta^{1/4}}) h(g_0)/n$ and $(1 + C {\beta^{1/4}}) h(g_0)/n$, as desired.
\end{proof}

\begin{proof}[Proof of Theorem \ref{thm:BCGstab}]
We closely follow the proof of \cite[Proposition 5.2 ii)]{BCGnegcurv}. First note it suffices to prove the claim for $v$ a unit vector.
Using the definitions of $H$ and $K$ together with the Cauchy-Schwarz inequality, we obtain
\begin{align*}
\langle K dF_p v, u \rangle \leq h(g) \langle H u, u \rangle^{1/2} \left( \int_{\partial \tilde M} (dB_{p, \xi} (v))^2 d \mu_p (\xi) \right)^{1/2}.
\end{align*}
(See \cite[(5.3)]{BCGnegcurv}.) 
Using the upper bound for $H$ in Proposition \ref{HKapprox}, the above inequality implies
\begin{align*}
\langle K dF_p v, u \rangle \leq \sqrt{1 + C \beta^{1/2}} \frac{h(g)}{\sqrt{n}} \Vert u \Vert \left( \int_{\partial \tilde M} (dB_{p, \xi} (v))^2 d \mu_p (\xi) \right)^{1/2}.
\end{align*}
Now let $u = dF_p(v)/ \Vert dF_p(v) \Vert$. Using the lower bound for $K$ in Proposition \ref{HKapprox} gives
\[ \Vert dF_p \Vert \leq (1 + C' \beta^{1/4}) \frac{h(g)}{h(g_0)} \sqrt{n}\left( \int_{\partial \tilde M} (dB_{p, \xi} (v))^2 d \mu_p (\xi) \right)^{1/2}.   \] 
Now let $L = dF_p \circ dF_p^T$ and let $v_i$ be an orthonormal basis for $T_p \tilde M$. Then, since $\Vert dB_{p, \xi} \Vert \leq \Vert v \Vert = 1$, we get
\begin{align*}
{\rm trace}(L) = \sum_{i=1}^n \langle Lv_i, v_i \rangle = \sum_{i=1}^n \langle dF_p(v_i), dF_p(v_i) \rangle \leq  \left((1 + C' \beta^{1/4}) \frac{h(g)}{h(g_0)} \right)^2 n. 
\end{align*}
Combining this with Proposition \ref{jacalmost} and the arithmetic-geometric mean inequality gives
\begin{equation}\label{tracebds}
(1 - \beta)^2 \left( \frac{h(g)}{h(g_0)} \right)^{2n} \leq | {\rm Jac} F(p)|^2 = \det L \leq \left( \frac{1}{n}{\rm trace} L \right)^n \leq \left((1 + C' \beta^{1/4}) \frac{h(g)}{h(g_0)} \right)^{2n}.
\end{equation}
Hence the hypotheses of Lemma \ref{approxAMGM} hold with $\omega = C'' \beta^{1/4}$. 
Lemma \ref{approxAMGM} thus implies
\[ (1 - C'' \beta^{1/8}) \frac{1}{n}{\rm trace} L \langle v, v \rangle \leq \langle Lv, v \rangle = \langle dF_p v, dF_p v \rangle \leq  (1 + C'' \beta^{1/8}) \frac{1}{n}{\rm trace} L \langle v, v \rangle. \]
Finally, using (\ref{tracebds}) gives
\[  (1 -  \beta)^{2/n} \left( \frac{h(g)}{h(g_0)} \right)^{2}  \leq \frac{\langle dF_p v, dF_p v \rangle}{\langle v, v \rangle} \leq  (1 + C'' \beta^{1/8}) \left((1 + C' \beta^{1/4}) \frac{h(g)}{h(g_0)} \right)^{2},  \]
which completes the proof.
\end{proof}

\section{Surfaces}\label{surfaces}

In this section, we prove a generalization of Otal and Croke's marked length spectrum rigidity result for negatively curved surfaces \cite{otalMLS, croke90}. Using the arguments of Otal \cite{otalMLS}, we show that pairs of negatively curved metrics on a surface become more isometric as the ratio of their marked length spectrum functions gets closer to 1. Aside from some background on the Liouville measure and Liouville current from Section \ref{volest}, this section does not rely on earlier parts of this paper.

Let $M$ be a closed surface of higher genus. 
Fix $0 < a \leq b$. 
Let $g_0$ be a $C^2$ Riemannian metric on $M$ with sectional curvatures contained in the interval $[-b^2, -a^2]$. 
Fix $\alpha, R > 0$.
Let $\mathcal{U}^{1, \alpha}_{g_0}(a, b, R)$ consist of all $C^2$ metrics $g$ on $M$ such that $\Vert g - g_0 \Vert_{C^{1, \alpha}} \leq R$.
%
%Let $\mathcal{C}(2, \lambda, \Lambda, v_0, D_0)$ consist of all closed $C^{\infty}$ Riemannian manifolds of dimension 2 with sectional curvatures contained in the interval $[-\Lambda^2, -\lambda^2]$, volume bounded below by $v_0$, and diameter bounded above by $D_0$.
In this section we will prove the following theorem about surfaces whose marked length spectra are close:

\begin{customthm}{1.1}
Let $M, a, b, R, g_0$ as above.
Fix $L > 1$. 
Then there exists $\eps = \eps(L, a, b, R, \alpha)  > 0$ small enough so that for any pair $(M, g), (M, h) \in \mathcal{U}_{g_0}^{1, \alpha} (a, b, R)$ satisfying
\begin{equation}\label{Lg_Lh}
1 - \eps \leq \frac{\mathcal{L}_{g}}{\mathcal{L}_{h}} \leq 1 + \eps,
\end{equation}
there exists an $L$-Lipschitz map $f: (M, g) \to (M, h)$. 
\end{customthm}
%The space $\mathcal{C}(2, \lambda, \Lambda, v_0, D_0)$ has the property that any sequence has a convergent subsequence in the Lipschitz topology; this is often called the Gromov compactness theorem \cite{gromovcomp}. In this paper, we use refinements of Gromov's theorem due to Pugh and Greene--Wu \cite{pughc1, greenewu}.  

By the Arzel{\`a}-Ascoli  theorem, any sequence $(M, g_n) \in \mathcal{U}_{g_0}^{1, \alpha} (a, b, R)$ has a subsequence $g_{n_k}$ which converges in the following sense: there is a Riemannian metric $g_0$ on $M$ such that in local coordinates we have $g^{ij}_{n_k} \to g_0^{ij}$ in the $C^{1, \alpha}$ norm, and the limiting $g_0^{ij}$ have regularity $C^{1, \alpha}$. 
In particular, the $C^0$ convergence of the $g^{ij}_{n_k}$ implies the distance functions converge in the following sense:

%It follows from \cite{greenewu} that any sequence $(M, g_n) \in \mathcal{C}(2, \lambda, \Lambda, v_0, D_0)$ has a subsequence $(M, g_{n_k})$ converging in the following sense: there is a Riemannian metric $g_0$ on $M$ such that in local coordinates we have $g^{ij}_{n_k} \to g_0^{ij}$ in the $C^{1, \alpha}$ norm, and the limiting $g_0^{ij}$ have regularity $C^{1, \alpha}$. 
%
%Additionally, the distance functions $d_{g_{n_k}}$ converge uniformly (with respect to the Lipschitz distance) to $d_{g_0}$ on compact sets; see \cite[p. 122]{greenewu}. In particular, this implies the following (see also Lemma \ref{bilip}): 

\begin{lem}\label{distmult}
Given any $A > 1$, there is a sufficiently large $k$ so that for all $p, q \in M$ we have
$A^{-1} \, d_{g_0} (p, q) \leq d_{g_{n_k}}(p, q) \leq A \, d_{g_0} (p, q)$. 
\end{lem}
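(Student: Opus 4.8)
The statement to prove is Lemma \ref{distmult}: given $A > 1$, for sufficiently large $k$ one has $A^{-1} d_{g_0}(p,q) \le d_{g_{n_k}}(p,q) \le A\, d_{g_0}(p,q)$ for all $p,q \in M$. This is a bi-Lipschitz comparison statement that should follow quickly from the two facts already recalled in the excerpt: (i) the subsequence $d_{g_{n_k}} \to d_{g_0}$ uniformly on compact sets, and (ii) $M$ is compact. The only subtlety is upgrading uniform additive closeness of the distance functions to the multiplicative (bi-Lipschitz) bound, which needs a uniform lower bound on $d_{g_0}(p,q)$ away from the diagonal, and a separate argument near the diagonal.

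The plan is as follows. First I would fix $A > 1$ and use uniform convergence on the compact set $M \times M$: there is $k_0$ so that $k \ge k_0$ implies $|d_{g_{n_k}}(p,q) - d_{g_0}(p,q)| < \eta$ for all $p,q$, where $\eta > 0$ will be chosen small in terms of $A$ and the geometry. For pairs with $d_{g_0}(p,q) \ge \eta/(A-1)$ (say), the additive error $\eta$ is at most $(A-1) d_{g_0}(p,q)$, so $d_{g_{n_k}}(p,q)$ lies between $(2-A) d_{g_0}(p,q)$ and $A\, d_{g_0}(p,q)$; shrinking $\eta$ if necessary (or replacing $A$ by something between $1$ and $A$ at the start) gives the two-sided bound $A^{-1} d_{g_0} \le d_{g_{n_k}} \le A\, d_{g_0}$ on this ``far'' region. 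The remaining task is the ``near-diagonal'' region where $d_{g_0}(p,q)$ is small, where additive estimates are too weak.

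For the near-diagonal region I would instead compare the metric tensors directly. Since $g^{ij}_{n_k} \to g_0^{ij}$ in $C^{1,\alpha}$ (in particular in $C^0$) on each coordinate chart, and $M$ is covered by finitely many charts, for $k$ large the tensor $g_{n_k}$ is pointwise $A$-bi-Lipschitz equivalent to $g_0$ as a quadratic form on each tangent space (uniformly over $M$). This immediately gives $A^{-1} \ell_{g_0}(\sigma) \le \ell_{g_{n_k}}(\sigma) \le A\, \ell_{g_0}(\sigma)$ for the lengths of any piecewise $C^1$ curve $\sigma$, hence, taking infima over curves joining $p$ to $q$, the bound $A^{-1} d_{g_0}(p,q) \le d_{g_{n_k}}(p,q) \le A\, d_{g_0}(p,q)$ for \emph{all} $p, q \in M$ at once. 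In fact this last argument alone proves the lemma without the case split, so I would present just this: reduce to the finitely many charts from the $C^{1,\alpha}$ convergence, deduce uniform pointwise comparison of the metric tensors, and integrate along curves.

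The main obstacle — really the only point requiring care — is making the passage from $C^0$-closeness of the coordinate expressions $g^{ij}_{n_k}$ to a genuinely uniform (chart-independent) bi-Lipschitz comparison of the Riemannian metrics on all of the compact manifold $M$: one must invoke compactness to extract a finite subcover by charts with controlled transition data, and note that the convergence $g^{ij}_{n_k} \to g^{ij}_0$ in $C^{1,\alpha}$ on each chart (which is part of the Greene--Wu conclusion, hence given) yields, for $k$ large, $(1-\epsilon)g_0 \le g_{n_k} \le (1+\epsilon)g_0$ as bilinear forms, with $\epsilon$ chosen so that $(1+\epsilon) \le A$ and $(1-\epsilon)^{-1} \le A$. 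Everything else is the standard fact that a pointwise bi-Lipschitz comparison of metric tensors passes to the induced distance functions by integrating along length-minimizing (or nearly minimizing) curves.
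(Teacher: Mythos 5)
Your proof is correct. The paper states Lemma~\ref{distmult} without proof, presenting it as an immediate consequence of the uniform convergence $d_{g_{n_k}} \to d_{g_0}$ on compact sets; your observation that uniform (hence merely additive) closeness of the distance functions does not by itself give the multiplicative ratio bound near the diagonal is valid, and it is the one subtle point here. Your fix---working at the level of metric tensors, using the $C^0$ part of the $C^{1,\alpha}$ convergence in the Greene--Wu statement to get $(1-\epsilon)g_0 \le g_{n_k} \le (1+\epsilon)g_0$ as quadratic forms uniformly over a finite cover of $M$ by charts, and then integrating along curves and passing to the infimum---is the cleanest route and is really what makes the Lipschitz-topology convergence in \cite{greenewu} do its work. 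It is arguably the argument the paper has in mind (since it also cites the $C^{1,\alpha}$ tensor convergence in the same paragraph), but the paper does not spell it out; you have correctly identified the needed input and supplied a complete proof. The same tensor-level argument would also streamline the proof of Lemma~\ref{bilip} earlier in the paper, which extends a bound of this type from a fundamental domain to all of $\tilde M$ by subdividing geodesics: that extension step becomes unnecessary if one compares the tensors directly, since the tensor bound transfers to the universal cover immediately.
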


%\begin{proof}
%\end{proof}

We will prove Theorem \ref{mainthm2} by contradiction. 
Indeed, suppose the statement is false. 
Then for every $\eps > 0$, there are $(M, g_{\eps}), (M, h_{\eps}) \in \mathcal{U}_{g_0}^{1, \alpha}(a, b, R)$ satisfying \eqref{Lg_Lh} so that there is no $L$-Lipschitz map $f: (M, g_{\eps}) \to (M, h_{\eps})$.
By the Arzela-Ascoli theorem, there is a subsequence $\eps_n \to 0$ so that $(M, g_{\eps_n}) \to (M, g_0)$ and $(M, h_{\eps_n}) \to (M, h_0)$ in the sense described above. 
From now on we will relabel $g_{\eps_n}$ as $g_n$ and $h_{\eps_n}$ as $h_n$. 
To prove the main theorem, it suffices to prove the following statement:

\begin{prop}\label{MLSrig}
Let $(M, g_0)$ and $(M, h_0)$ be the $C^{1, \alpha}$ limits of the counterexamples above.
Then there is a map $f: M \to M$ such that for all $p, q \in M$ we have $d_{g_0}(p, q) = d_{h_0}(f(p), f(q))$.
\end{prop}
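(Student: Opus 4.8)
The plan is to show that the Greene--Wu limits $(M, g_0)$ and $(M, h_0)$ have the same marked length spectrum, and then invoke Otal's rigidity theorem \cite{otalMLS}. There are two conceptually separate tasks: first, a \emph{continuity} statement, that the marked length spectrum of the limiting metric is the limit of the marked length spectra along the sequence; second, an \emph{application} of the known rigidity result for surfaces to deduce that $g_0$ and $h_0$ are isometric, hence that there is a map $f$ with $d_{g_0}(p,q) = d_{h_0}(f(p), f(q))$ for all $p, q$.

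For the continuity step, fix a conjugacy class $\gamma \in \Gamma = \pi_1(M)$. First I would observe that the metrics $g_n, h_n, g_0, h_0$ all lie in $\mathcal{C}(2, \lambda, \Lambda, v_0, D_0)$ (the limit inherits the two-sided curvature bounds, at least weakly, and the volume and diameter bounds), so all of these metrics are negatively curved and each conjugacy class $\gamma$ has a unique closed geodesic representative. Using Lemma \ref{distmult}, for any $A > 1$ and $n$ large we have $A^{-1} d_{g_0} \le d_{g_n} \le A \, d_{g_0}$ on $\tilde M$, so lengths of closed curves in the free homotopy class of $\gamma$ comparable up to the factor $A$; taking the infimum over the class gives $A^{-1} \mathcal{L}_{g_0}(\gamma) \le \mathcal{L}_{g_n}(\gamma) \le A \, \mathcal{L}_{g_0}(\gamma)$, and since $A$ is arbitrary, $\mathcal{L}_{g_n}(\gamma) \to \mathcal{L}_{g_0}(\gamma)$. (Here one must be a little careful that $\mathcal{L}_{g_0}$ is indeed computed with respect to the limit metric as a length structure; this is where one uses that $d_{g_0}$ is the intrinsic distance of the $C^{1,\alpha}$ metric $g_0$, not merely an abstract uniform limit.) The identical argument gives $\mathcal{L}_{h_n}(\gamma) \to \mathcal{L}_{h_0}(\gamma)$.

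Now I would combine this with the hypothesis. Since the counterexamples satisfy $1 - \eps_n \le \mathcal{L}_{g_n}/\mathcal{L}_{h_n} \le 1 + \eps_n$ and $\eps_n \to 0$, for each fixed $\gamma$ we get $\mathcal{L}_{g_n}(\gamma)/\mathcal{L}_{h_n}(\gamma) \to 1$; combined with the two convergence statements from the previous paragraph, $\mathcal{L}_{g_0}(\gamma) = \mathcal{L}_{h_0}(\gamma)$ for every $\gamma \in \Gamma$. Thus $(M, g_0)$ and $(M, h_0)$ are two negatively curved surfaces on the same underlying closed surface with equal marked length spectra. By Otal's theorem \cite{otalMLS} (the main rigidity result for negatively curved surfaces), there is an isometry $f : (M, g_0) \to (M, h_0)$, and an isometry preserves distances exactly: $d_{g_0}(p, q) = d_{h_0}(f(p), f(q))$ for all $p, q \in M$. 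This is the desired conclusion.

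The main obstacle I anticipate is a regularity/technical one rather than a conceptual one: Otal's theorem as originally stated is for $C^\infty$ (or at least $C^2$) negatively curved metrics, whereas the Greene--Wu limits $g_0, h_0$ are only $C^{1,\alpha}$, so the geodesic flow is only $C^{0,\alpha}$ and the standard machinery (Anosov property, cross-ratio, Liouville current, the arguments of Section \ref{volest}) must be checked to still apply at this regularity. I expect the resolution is that the curvature bounds $[-\Lambda^2, -\lambda^2]$ survive in the limit in a weak/distributional sense sufficient for the space to be CAT$(-\lambda^2)$ and to have a well-defined geodesic flow with the hyperbolicity needed for Otal's argument — in fact, the body of the paper already develops (in Lemmas \ref{bilip}, \ref{bussconv} and the surrounding discussion) exactly the tools for handling Busemann functions and boundary behavior of such $C^{1,\alpha}$ limits, so the cross-ratio and Liouville current make sense and Otal's proof can be run verbatim. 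A secondary point to be careful about is that the free homotopy classes/the fundamental group identification is the same for all metrics on the fixed manifold $M$, so that ``$\gamma$'' unambiguously labels the same geodesic across the sequence and in the limit; this is immediate since $M$ is fixed.
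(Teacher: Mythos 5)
Your overall strategy coincides with the paper's: establish that the Greene--Wu limits $(M,g_0)$ and $(M,h_0)$ have equal marked length spectra, then run Otal's rigidity argument to produce a distance-preserving map. Your argument for the continuity of the marked length spectrum is a bit more direct than the paper's: you deduce $\mathcal{L}_{g_n}(\gamma)\to\mathcal{L}_{g_0}(\gamma)$ straight from the bi-Lipschitz comparison of distance functions (Lemma \ref{distmult}, extended to $\tilde M$ as in Lemma \ref{bilip}), whereas the paper routes this through Lemma \ref{gammanconv}, which uses Pugh's convergence of exponential maps and the Anosov closing lemma to get convergence of the actual closed geodesics. Either is fine for the length statement; the paper's version also yields convergence of the geodesic representatives, which is used elsewhere.

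The gap is in the last step. You write that once the limits are seen to be CAT$(-\lambda^2)$, ``the cross-ratio and Liouville current make sense and Otal's proof can be run verbatim,'' and then invoke Otal's theorem as a black box. This overstates the ease of the reduction and is precisely where the paper's real work in Section \ref{surfaces} lies. Otal's theorem is proved for smooth negatively curved metrics, and several of its ingredients are not automatic for a $C^{1,\alpha}$ metric whose geodesic flow is only Lipschitz. The paper has to re-derive them explicitly: the coordinate expression $\frac{1}{2}\sin\theta\,d\theta\,dt$ for the Liouville current, obtained by exhibiting the $C^\alpha$ $2$-form $\tau$ as the limit of the $d\omega^n$ and proving Lemma \ref{sintheta}; an Anosov closing lemma for the non-smooth flow $\phi_0$, proved via approximation by the flows $\phi_n$ and not by general CAT arguments; the push-forward property of the Liouville current under the geodesic correspondence (Proposition \ref{cvarLC}); and equidistribution of closed geodesics with respect to $\mu_0$ (Proposition \ref{sig}), which requires invoking a specification property for CAT$(-1)$ geodesic flows in place of Sigmund's theorem for smooth Anosov flows. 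Only after all of that can one run the $\Theta'$-subadditivity machinery from \cite{otalMLS} and construct $f$ as the map that sends intersection points of geodesics to intersection points. You correctly anticipate that these verifications are the obstacle and that the resolution passes through the CAT structure and the approximating sequence, but you do not carry any of them out, and the assertion that the argument is ``verbatim'' is not accurate --- the paper's Lemmas and Propositions in Sections 4.2--4.4 are replacements, not citations.
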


\begin{proof}[Proof of Theorem \ref{mainthm2}]
Fix $L > 1$ and suppose the theorem is false. Let $(M, g_n) , (M, h_n)$ be the convergent sequences of counter-examples defined above.
Since $(M, g_n) \to (M, g_0)$, Lemma \ref{distmult} gives large enough $n$ so that $\sqrt{L}^{-1} \, d_{g_0}(p, q) \leq d_{g_n}(p, q) \leq \sqrt{L} \, d_{g_0}(p, q)$ for all $p, q \in M$, and similarly for $d_{h_n}$. Then Proposition \ref{MLSrig} gives
\[ d_{g_n}(p,q) \leq \sqrt{L} \, d_{h_0} (f(p), f(q)) \leq L \, d_{h_n}(f(p), f(q)). \] 
So $f: (M, g_n) \to (M, h_n)$ is an $L$-Lipschitz map, which is a contradiction.
\end{proof}

\subsection{The marked length spectra of $(M, g_0)$ and $(M, h_0)$}
To prove Proposition \ref{MLSrig}, we will first show $(M, g_0)$ and $(M, h_0)$ have the same marked length spectrum. Then we will construct an isometry $f:(M, g_0) \to (M, h_0)$. 
We use the same main steps as in \cite{otalMLS}; however, since $g_0$ and $h_0$ are only of $C^{1, \alpha}$ regularity, there are additional technicalities that arise when verifying the requisite properties of the Liouville measure and Liouville current in this context.

We first recall some additional properties of the limit $(M, g_0)$. 
By a theorem of Pugh \cite[Theorem 1]{pughc1}, this limiting metric will have a Lipschitz geodesic flow, and the geodesics themselves are of $C^{1,1}$ regularity. %we need to check Pugh's Gromov's convergence assumptions are satisfied. the first condition is C^0 convergence. the last one holds by our pinched curvature. the second one holds by C^1 convergence of the g_{ij}. 
Moreover, the exponential maps converge uniformly on compact sets \cite[Lemma 2]{pughc1}, which is equivalent to the following:

\begin{lem}\label{geoflowconv}
Let $\phi_n$ and $\phi_0$ denote the geodesic flows on $(T^1 M, g_n)$ and $(T^1 M, g_0)$ respectively. Fix $T > 0$ and let $K \subset T^1 M$ compact. Then
$\phi^t_n v \to \phi^t_0 v$ uniformly for $(t, v) \in [0, T] \times K$. 
\end{lem}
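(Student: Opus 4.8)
The plan is to obtain this as essentially a reformulation of Pugh's convergence of exponential maps \cite[Lemma 2]{pughc1}. For a metric $g$ on $M$ and $v \in TM$, write $c^g_v(s) = \exp^g_{\pi(v)}(sv)$ for the $g$-geodesic with $c^g_v(0) = \pi(v)$ and $(c^g_v)'(0) = v$; by definition the geodesic flow of $g$ is $\phi^t v = \big(c^g_v(t),\, (c^g_v)'(t)\big)$, and it preserves each $g_n$-unit sphere bundle. Thus to prove uniform convergence $\phi^t_n v \to \phi^t_0 v$ on $[0,T] \times K$ it suffices to show that the footpoints $c^{g_n}_v(t)$ and the velocities $(c^{g_n}_v)'(t)$ converge to $c^{g_0}_v(t)$ and $(c^{g_0}_v)'(t)$, uniformly for $(t,v) \in [0,T] \times K$. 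The footpoint statement is exactly \cite[Lemma 2]{pughc1}: since $g^{ij}_n \to g^{ij}_0$ in $C^{1,\alpha}_{\mathrm{loc}}$ (as recalled before Lemma \ref{distmult}, following \cite{greenewu}), Pugh shows $\exp^{g_n} \to \exp^{g_0}$ uniformly on compact subsets of $TM$, so $c^{g_n}_v(t) = \exp^{g_n}_{\pi(v)}(tv) \to \exp^{g_0}_{\pi(v)}(tv) = c^{g_0}_v(t)$.

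For the velocities I would upgrade this $C^0$ convergence of the curves $s \mapsto c^{g_n}_v(s)$ to $C^1$ convergence by an Arzel\`a--Ascoli argument. Each such curve solves the geodesic equation $(c^{g_n}_v)'' = -\,\Gamma^{g_n}\!\big(c^{g_n}_v\big)\big((c^{g_n}_v)', (c^{g_n}_v)'\big)$, where $\Gamma^{g_n}$ denotes the Christoffel symbols of $g_n$. Since $g^{ij}_n \to g^{ij}_0$ in $C^{1,\alpha}_{\mathrm{loc}}$, the $\Gamma^{g_n}$ converge to $\Gamma^{g_0}$ in $C^0_{\mathrm{loc}}$, and in particular are uniformly bounded on the compact region swept out by the geodesics issuing from $K$ over $[0,T]$; together with the fact that $|(c^{g_n}_v)'|$ is constant in $t$ and bounded for $v \in K$, this shows the $c^{g_n}_v$ are uniformly bounded in $C^2([0,T])$, hence precompact in $C^1([0,T])$. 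Any $C^1$ subsequential limit solves the limiting geodesic equation with initial data $(\pi(v), v)$ and agrees with $c^{g_0}_v$ in $C^0$, so by uniqueness of geodesics of the limit metric (Pugh's theorem, \cite[Theorem 1]{pughc1}) it must equal $c^{g_0}_v$; thus the whole sequence converges in $C^1([0,T])$, and a standard equicontinuity argument in the parameters $(t,v)$ makes this convergence uniform on $[0,T] \times K$. Reading off the two components of $\phi^t_n v$ then gives the lemma.

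I expect the one genuine subtlety — and the reason this is not a literal citation — to be the low regularity: a $C^{1,\alpha}$ metric has merely $C^{0,\alpha}$ Christoffel symbols, so the geodesic ODE is not Lipschitz, and the classical Picard--Lindel\"of theory yields neither uniqueness of geodesics for the limit metric nor continuous dependence on the metric. This is exactly what Pugh handles in \cite[Theorem 1, Lemma 2]{pughc1}: the Greene--Wu limit still carries a single-valued, Lipschitz geodesic flow with $C^{1,1}$ geodesics, realized as the uniform limit of the approximating geodesic flows. Granting his results, the proof reduces to the bookkeeping above translating between exponential maps and the geodesic flow, so the write-up should be short.
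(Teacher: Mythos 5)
Your proof is correct, and it follows the same route the paper does: the lemma is deduced from Pugh's convergence of exponential maps. The paper, however, offers no proof at all — it simply asserts that Pugh's statement ``the exponential maps converge uniformly on compact sets'' is \emph{equivalent} to the lemma, and moves on. You correctly observe that this is not literally an equivalence: $\exp^{g_n}_p(tv) \to \exp^{g_0}_p(tv)$ in $C^0$ gives only the footpoint component of $\phi^t_n v$, while the lemma also asserts convergence of the velocity component $(c^{g_n}_v)'(t)$. Your Arzel\`a--Ascoli argument — uniform $C^2$ bounds on the geodesics from $C^0$-convergence of the Christoffel symbols, hence $C^1$-precompactness, plus identification of every subsequential limit with $c^{g_0}_v$ via the $C^0$ limit from Pugh's Lemma 2 (and his uniqueness statement for the limit metric) — is a sound and natural way to supply the missing step. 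In short, your write-up proves what the paper only cites, and it correctly flags why the cited statement does not \emph{literally} yield the lemma; this is a genuine improvement in rigor rather than a different approach.
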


In addition, the space $(M, g_0)$ is CAT$(- a^2)$ because it is a suitable limit of such spaces; see \cite[Theorem II.3.9]{bh13nonpos}.
%because this Greene--Wu convergence implies Gromov--Hausdorff convergence, which implies 4-point convergence. 
Thus, even though the curvature tensor is not defined for the $C^{1, \alpha}$ metric $g_0$, this limiting space still exhibits many key properties of negatively curved manifolds. 
One such property, heavily used in Otal's proof of marked length spectrum rigidity \cite{otalMLS}, is the fact that the angle sum of a non-degenerate geodesic triangle is strictly less than $\pi$ \cite[Lemma 12.3.1 ii)]{docarmo}. 
This still holds for CAT$(- a^2)$ spaces, essentially by definition \cite[Proposition II.1.7 4]{bh13nonpos}. 

Moreover, we can define the marked length spectrum of $(M, g_0)$ the same way as for negatively curved manifolds. The fact that there exists a geodesic representative for each homotopy class is a general application of the Arzel{\`a}-Ascoli theorem; see \cite[Proposition I.3.16]{bh13nonpos}. The proof that this geodesic representative is unique in the negatively curved case immediately generalizes to the CAT$(-a^2)$ case; see \cite[Lemma 12.3.3]{docarmo}.

We will now show $(M, g_0)$ and $(M, h_0)$ have the same marked length spectrum. 
We start with a preliminary lemma.

\begin{lem}\label{gammanconv}
Let $\langle \gamma \rangle$ be a free homotopy class. 
Let $\gamma_0$ and $\gamma_n$ denote the geodesic representatives with respect to $g_0$ and $g_n$ respectively. 
Write $\gamma_0(t) = \phi_0^t v_0$ and $\gamma_n(t) = \phi_n^t v_n$. 
Then for all $0 \leq t \leq l_{g_0}(\gamma_0)$, we have $\phi_n^t v_n \to \phi_0^t v_0$ in $T^1 M$ as $n \to \infty$.
\end{lem}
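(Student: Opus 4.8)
\textbf{Proof proposal for Lemma \ref{gammanconv}.}

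The plan is to combine compactness of the unit tangent bundle with the convergence of lengths of closed geodesics and the uniform convergence of geodesic flows from Lemma \ref{geoflowconv}. First I would establish that $l_{g_n}(\gamma_n) \to l_{g_0}(\gamma_0)$: since $\gamma_n$ is the $g_n$-geodesic representative of $\langle\gamma\rangle$, we have $l_{g_n}(\gamma_n) = \mathcal{L}_{g_n}(\gamma)$, and by Lemma \ref{distmult} the $d_{g_n}$ converge uniformly to $d_{g_0}$, so translation lengths (equivalently, lengths of the unique geodesic representatives) converge; thus $l_{g_n}(\gamma_n) \to l_{g_0}(\gamma_0) =: \tau$. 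Consequently the parameters $t$ involved in $\gamma_n(t) = \phi_n^t v_n$ for $t \in [0, l_{g_0}(\gamma_0)]$ stay in a fixed compact interval (up to a negligible adjustment for the small discrepancy between $l_{g_n}(\gamma_n)$ and $\tau$).

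Next I would pass to a subsequence. Since $T^1 M$ is compact, after passing to a subsequence we may assume $v_n \to v_\infty$ for some $v_\infty \in T^1 M$. By Lemma \ref{geoflowconv}, for the fixed time $T = \tau + 1$ (say) and the compact set $K = T^1 M$, we have $\phi_n^t v \to \phi_0^t v$ uniformly for $(t,v) \in [0,T] \times T^1 M$; combining this uniform convergence with $v_n \to v_\infty$ gives $\phi_n^t v_n \to \phi_0^t v_\infty$ uniformly for $t \in [0, \tau]$. In particular $\gamma_\infty(t) := \phi_0^t v_\infty$ is a $g_0$-geodesic, and since each $\gamma_n$ is a closed curve in the free homotopy class $\langle\gamma\rangle$ of $g_n$-length converging to $\tau$, the limit curve $\gamma_\infty$ is a closed $g_0$-geodesic that also lies in the class $\langle\gamma\rangle$ (free homotopy classes are detected by the fundamental group, which is common to all the metrics on $M$; more concretely, one can lift to $\tilde M$ and check that the endpoints on $\partial\tilde M$ are the fixed points of $\gamma$, using that the deck action is metric-independent) and has length $\tau$. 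By the uniqueness of geodesic representatives in CAT$(-\lambda^2)$ spaces (cf.\ \cite[Lemma 12.3.3]{docarmo}, as noted above), $\gamma_\infty$ coincides with $\gamma_0$; hence, after reparametrizing so that basepoints match, $v_\infty = v_0$ and $\phi_0^t v_\infty = \phi_0^t v_0 = \gamma_0(t)$.

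Finally, since every subsequence of $\{v_n\}$ has a further subsequence along which $\phi_n^t v_n \to \phi_0^t v_0$ uniformly on $[0,\tau]$, the full sequence converges: $\phi_n^t v_n \to \phi_0^t v_0$ in $T^1 M$ for all $0 \le t \le l_{g_0}(\gamma_0)$, as claimed. The main obstacle I anticipate is the bookkeeping around the choice of basepoint and parametrization: $\gamma_n$ and $\gamma_0$ are only geodesics up to reparametrization and choice of starting vector on the axis, so one must be careful to match up the lifts to $\tilde M$ (using the $\Gamma$-action, which is the same for all metrics) before claiming $v_n \to v_0$ rather than merely $v_n \to \gamma_0(t_0)$ for some shift $t_0$; once the lifts are pinned down by requiring the axes to have the same endpoints in $\partial \tilde M$, this ambiguity disappears.
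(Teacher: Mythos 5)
Your proof is correct, and it takes a genuinely different route from the paper's. The paper's argument starts from the vector $v_0$ tangent to the closed $g_0$-geodesic, flows it for one $g_0$-period $T$ under the $g_n$-geodesic flow, and observes via Lemma \ref{geoflowconv} that $\phi_n^T v_0$ is close to $v_0$; it then invokes the Anosov closing lemma for $\phi_n$ to produce a genuine closed $\phi_n$-orbit shadowing $\phi_n^t v_0$, which by uniqueness of geodesic representatives must be $\gamma_n$, and which by construction is close to $\gamma_0$. Your argument instead uses soft compactness: extract a convergent subsequence $v_n \to v_\infty$, use the uniform flow-convergence of Lemma \ref{geoflowconv} together with continuity of $\phi_0$ to pass the limit into the flow, observe that the limit curve is a closed $g_0$-geodesic of period $\tau$ in the class $\langle\gamma\rangle$, and then identify it with $\gamma_0$ by uniqueness of geodesic representatives in CAT$(-\lambda^2)$ spaces. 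One advantage of your route is that it sidesteps the need for Anosov-closing constants that are uniform in $n$ -- a point the paper handles only in a subsequent lemma, whose proof (as written) cites Lemma \ref{gammanconv} back, so your approach untangles the dependency more cleanly. Your preliminary observation that $l_{g_n}(\gamma_n)\to l_{g_0}(\gamma_0)$ already follows directly from Lemma \ref{distmult} is also exactly what the next proposition in the paper needs, so you are effectively proving a bit more on the way. Both approaches share the same parametrization bookkeeping (choosing $v_n$ so that the convergence is to $v_0$ rather than to some time-shift of it); you flag this explicitly, which is the right instinct -- pinning down $v_n$ as the closest point on the $g_n$-axis (with endpoints in $\partial\tilde M$ matched via the $\Gamma$-action) removes the ambiguity and makes the subsequence-of-subsequences step airtight.
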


\begin{proof}
Let $T = l_{g_0}(\gamma_0)$. 
By Lemma \ref{geoflowconv}, choose $n$ large enough so that $d(\phi_n^t v_0, \phi_0^t v_0) < \eps$ %this distance is in the unit tangent bundle
 for all $t \in [0, T]$. In particular, $\phi_n^T v_0$ is close to $\phi_0^T v_0 = v_0$. 
 %where the last equality is because $\gamma_0$ is a closed geodesic. 
 The Anosov closing lemma applied to the geodesic flow on $(T^1 M, g_n)$ gives 
 $\phi_n^t v_0$ is shadowed by a closed orbit. By construction, this closed orbit is close to $\gamma_0$ and is also homotopic to it, which completes the proof.
\end{proof}

\begin{prop}
The Riemannian surfaces $(M, g_0)$ and $(M, h_0)$ have the same marked length spectrum.
\end{prop}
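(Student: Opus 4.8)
The plan is to show that for each free homotopy class $\langle \gamma \rangle$ one has $l_{g_0}(\gamma) = l_{h_0}(\gamma)$, using the marked length spectrum hypothesis \eqref{LgLh} on the approximating metrics together with the convergence results already established. First I would fix a free homotopy class $\langle \gamma \rangle$ and denote by $\gamma_0$, $\gamma_n$ (resp. $\gamma_0'$, $\gamma_n'$) the $g_0$- and $g_n$-geodesic representatives (resp. the $h_0$- and $h_n$-geodesic representatives). The key point is that the length of a geodesic representative in $(M,g_n)$ converges to the length of the geodesic representative in the Greene--Wu limit $(M,g_0)$. This follows from Lemma \ref{gammanconv}: since $\phi_n^t v_n \to \phi_0^t v_0$ uniformly on $[0, l_{g_0}(\gamma_0)]$ and the flows and their periods vary continuously, the periods $l_{g_n}(\gamma_n)$ of the shadowing closed orbits converge to the period $l_{g_0}(\gamma_0)$. (One should also note the reverse inequality, that $\limsup l_{g_n}(\gamma_n) \le l_{g_0}(\gamma_0)$, which is immediate since the $g_0$-geodesic loop, viewed in the $g_n$ metric, is a loop in the right homotopy class whose $g_n$-length converges to $l_{g_0}(\gamma_0)$ by uniform convergence of the metrics, and the $g_n$-geodesic representative can only be shorter; combined with Lemma \ref{distmult}-type estimates this pins down the limit.) The same argument applies to the $h$-metrics, giving $l_{h_n}(\gamma_n') \to l_{h_0}(\gamma_0')$.

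Next I would invoke the hypothesis: by \eqref{LgLh} applied to $g_n = g_{\eps_n}$ and $h_n = h_{\eps_n}$, we have
\[
1 - \eps_n \le \frac{\mathcal{L}_{g_n}(\gamma)}{\mathcal{L}_{h_n}(\gamma)} \le 1 + \eps_n
\]
for every $\gamma \in \Gamma$, i.e. $(1-\eps_n) l_{h_n}(\gamma_n') \le l_{g_n}(\gamma_n) \le (1+\eps_n) l_{h_n}(\gamma_n')$. Passing to the limit as $n \to \infty$, using $\eps_n \to 0$ and the convergence of lengths established in the previous paragraph, yields $l_{g_0}(\gamma_0) = l_{h_0}(\gamma_0')$. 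Since $\langle \gamma \rangle$ was an arbitrary free homotopy class, this shows $\mathcal{L}_{g_0} = \mathcal{L}_{h_0}$.

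The main obstacle — and the step that needs the most care — is justifying that the length of the $g_n$-geodesic representative in a fixed homotopy class actually converges to the length of the $g_0$-geodesic representative, rather than merely that the closed orbits themselves converge pointwise. The subtlety is twofold: one must rule out the geodesic flow in the limit having a \emph{longer} period in that homotopy class (handled by comparing with the image of the $g_0$-geodesic under the metric convergence, as sketched above, using that $(M,g_0)$ is CAT$(-\lambda^2)$ so the geodesic representative is unique), and one must rule out it having a \emph{shorter} period, which is where Lemma \ref{gammanconv} and the Anosov closing lemma for $(T^1M, g_n)$ do the work by producing a $g_n$-closed orbit close to $\gamma_0$ in the correct homotopy class. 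Once these length-convergence statements are in hand, the remainder is a routine limiting argument on the inequalities from \eqref{LgLh}.
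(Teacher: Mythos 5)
Your proposal is correct and follows essentially the same approach as the paper: establish, for each free homotopy class, that $l_{g_n}(\gamma_n) \to l_{g_0}(\gamma_0)$ and $l_{h_n}(\tilde\gamma_n) \to l_{h_0}(\tilde\gamma_0)$ via Lemma \ref{gammanconv} (with the Anosov closing lemma and the bi-Lipschitz convergence of distances), then pass to the limit in $1-\eps_n \le \mathcal{L}_{g_n}/\mathcal{L}_{h_n} \le 1+\eps_n$. Your extra discussion of the $\limsup$ and $\liminf$ inequalities is a reasonable unpacking of why the lengths converge, which the paper leaves compressed into the citations of Lemmas \ref{gammanconv} and \ref{distmult}.
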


\begin{proof}
The previous lemma, together with Lemma \ref{distmult}, implies $l_{g_n} (\gamma_n) \to l_{g_0}(\gamma_0)$ as $n \to \infty$. 
%
%
%Let $0 = t_0 < \dots < t_i < \dots < t_n = 1$ be a partition of $[0,1]$ so that for each $i$ the piece of the curve $\gamma_0(t)$ between $t_i$ and $t_{i+1}$ is contained in a totally normal neighborhood of $(M, g_0)$.
%%
%Fix $\delta > 0$. Choose $K$ large enough such that for all $k \geq K$ both of the following hold:
%\begin{enumerate}
%\item $d_{g_0} (\gamma_k(t), \gamma_0(t)) < \delta/4$ for all $t \in [0,1]$ and 
%\item $|d_{g_k} (p,q) - d_{g_{0}} (p,q) | < \delta/2$ for all $p, q \in M$.
%\end{enumerate}
%%Condition (1) is possible because the $\gamma_k$ converge uniformly to $\gamma_0$ and (2) is possible because the distance functions converge uniformly. 
%We then have
%\begin{align*}
%|d_{g_0} (\gamma_0(t_i), \gamma_0(t_{i+1})) - d_{g_k} (\gamma_k (t_i), \gamma_k (t_{i+1}))|
%&\leq |d_{g_0} (\gamma_0(t_i), \gamma_0(t_{i+1})) - d_{g_{0}} (\gamma_k (t_i), \gamma_k (t_{i+1}))| + \delta/2 < \delta,
%%&< \delta
%\end{align*}
%%
%which shows $l_{g_k} (\gamma_k) \to l_{g_0}( \gamma_0)$ as $k \to \infty$.
%
Let $\tilde \gamma_n$ be the geodesic representatives of $\langle \gamma \rangle$ with respect to the $h_n$ metrics. Then we also have $l_{h_n} (\tilde \gamma_n) \to l_{h_0} (\tilde \gamma_0)$ as $n \to \infty$.
Since $\mathcal{L}_{g_n} / \mathcal{L}_{h_n} \to 1$, we obtain  $l_{g_0}( \gamma_0) / l_{h_0} (\tilde \gamma_0) = 1$, which completes the proof.
\end{proof}

\subsection{Liouville current}
Now that we have two surfaces with the same marked length spectrum, we will follow the method of \cite{otalMLS} to show they are isometric. 
Two key tools used in Otal's proof are the Liouville current and the Liouville measure (both defined at the beginning of Section \ref{volest}
). In this section and the next, we will construct analogous measures for the limit $(M, g_0)$ and show they still satisfy the properties required for Otal's proof.

Recall the Liouville current is a $\Gamma$-invariant measure on the space of geodesics of $\tilde M$; see Section \ref{volest}. 
Recall as well the following relation between the cross-ratio and Liouville current for surfaces.
Let $\xi, \xi', \eta, \eta' \in \partial \tilde M$ be four distinct points. 
Since $\partial \tilde M$ is a circle, the pair of points $(\xi,\xi')$ determines an interval in the boundary (after fixing an orientation). 
Let $(\xi,\xi') \times (\eta, \eta') \in \partial^2 \tilde M$ denote the geodesics starting in the interval $(\xi,\xi')$ and ending in the interval $(\eta,\eta')$. 
Then
\begin{equation}\label{eq:LCdim2}
 \lambda((\xi,\xi') \times (\eta,\eta')) = \frac{1}{2} [\xi, \xi', \eta, \eta'].  
\end{equation}
(See \cite[Proof of Theorem 2]{otalMLS} and \cite[Theorem 4.4]{hersonskypaulin}.)

We can use the above equation to define the Liouville current $\lambda_0$ on $(M, g_0)$. Let $\lambda_n$ denote the Liouville current with respect to the smooth metric $g_n$. It is then clear from Proposition \ref{crmain} that $\lambda_n(A) \to \lambda_0(A)$ for any Borel set $A \subset \partial^2 \tilde M$. 
%the measures are not mutually abs cont, but they are still weak-star close

We now recall a key property of the Liouville current used in Otal's proof.
We begin by defining coordinates on the space of geodesics: Fix $v \in T^1 M$ and $T > 0$, and let $t \mapsto \eta(t)$ be the geodesic segment of length $T$ with $\eta'(0) = v$. Let $\mathcal{G}_v^T$ denote the (bi-infinite) geodesics which intersect the geodesic segment $\eta$ transversally. 
 Let $b: [0, T] \times (0, \pi) \to T^1 M$ be the map defined by sending $(t, \theta)$ to the unit tangent vector with footpoint $\eta(t)$ obtained by rotating $\eta'(t)$ by angle $\theta$. We can then identify each vector $b(t, \theta)$ with a unique geodesic in $\mathcal{G}_v^T$ (see \cite[p. 155]{otalMLS}). (This is similar, but not the same, as our construction in Lemma \ref{lem:stdsymp} above; see Remark \ref{rem:otal}.) 
 %(We can also think of these as coordinates on the unit tangent bundle by adding in an $s$ direction).

When $g$ is a smooth Riemannian metric on $M$, the Liouville current with respect to the above coordinates is of the form $\frac{1}{2} \sin \theta \, d \theta \, dt$. The same proof works for the measure $\lambda_0$ defined in terms of the $C^{1, \alpha}$ Riemannian metric $g_0$. 
To see this, let $P: TM \to M$ denote the footpoint map. Since $g$ is $C^{1, \alpha}$, it has a connection of $C^{\alpha}$ regularity, so we can define the connector map $\kappa$ as in \ref{Ev}. 
For $v \in T^1 M$ and $V, W$, define the $C^{\alpha}$ 2-form
\[ \tau_v( V, W) =  \langle d P W,  \kappa_v V \rangle -  \langle d P V,  \kappa_v W \rangle. \]

In the case of a smooth Riemannian metric, the above formula is the coordinate expression for the symplectic form $d \omega$, as in \ref{sympformula}.
% defined at the beginning of Section \ref{lcsub} \cite[1.D]{keiththesis}. 
%
Since the $g^n_{ij}$ and their derivatives converge to those of $g_0$, this means $\tau$ is the limit of the $d \omega^n$ for the metrics $g_n$. Since each $d \omega^n$ is invariant under the geodesic flow $\phi_n$, Lemma \ref{geoflowconv} implies $\tau$ is invariant under the geodesic flow $g_0$. Therefore, we can think of $\tau$ as a $C^{\alpha}$ 2-form on the space of geodesics, which in turn gives rise to a measure.
 
\begin{lem}\label{sintheta}
Let $b: [0, T] \times (0, \pi) \to T^1 M$ as above. Then $b^* \tau = \sin \theta \, d \theta \, dt$. 
\end{lem}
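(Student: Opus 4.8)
The plan is to verify this by a direct computation in the coordinates $b(t,\theta)$, mimicking the classical smooth case but keeping track of regularity, since the only subtlety is that $\tau$ is a $C^\alpha$ form rather than a smooth one. Concretely, I would compute the two partial derivative vectors $\partial_t b(t,\theta)$ and $\partial_\theta b(t,\theta)$ in $T_{b(t,\theta)}T^1M$, express each via the isomorphism $\xi \mapsto (d\pi_{TM}\xi, \kappa_v\xi)$ from the cited splitting, and then plug these into the formula
\[ \tau_v(\xi_1,\xi_2) = \langle d\pi_{TM}\xi_2, \kappa_v\xi_1\rangle - \langle d\pi_{TM}\xi_1, \kappa_v\xi_2\rangle. \]

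First I would handle $\partial_\theta b$. Fixing $t$, the curve $\theta \mapsto b(t,\theta)$ is a curve in the fiber $T^1_{\eta(t)}M$, so its footpoint curve in $M$ is constant; hence $d\pi_{TM}(\partial_\theta b) = 0$. Its connector component $\kappa_v(\partial_\theta b)$ is the covariant derivative of the unit vector field obtained by rotating $\eta'(t)$, which is the unit vector $J_t$ perpendicular to $b(t,\theta)$ (the infinitesimal rotation of a unit vector in a surface). So $\partial_\theta b \leftrightarrow (0, J_t)$ with $\|J_t\| = 1$. Next, $\partial_t b$: here $b(t,\theta)$ is a vector field along the geodesic $\eta$, so $d\pi_{TM}(\partial_t b) = \eta'(t)$, and $\kappa_v(\partial_t b) = \nabla_{\eta'(t)} b(\cdot,\theta)$. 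Since $b(t,\theta)$ is obtained from the parallel-ish rotation of $\eta'(t)$ by the fixed angle $\theta$ — more precisely, writing $b(t,\theta) = (\cos\theta)\eta'(t) + (\sin\theta)J_t^{\eta}$ where $J_t^\eta$ is the unit normal to $\eta'(t)$ along $\eta$, both $\eta'$ and $J^\eta$ are parallel along $\eta$ — we get $\nabla_{\eta'(t)} b(\cdot,\theta) = 0$. So $\partial_t b \leftrightarrow (\eta'(t), 0)$. Plugging into $\tau$:
\[ \tau(\partial_t b, \partial_\theta b) = \langle d\pi(\partial_\theta b), \kappa(\partial_t b)\rangle - \langle d\pi(\partial_t b), \kappa(\partial_\theta b)\rangle = \langle 0, 0\rangle - \langle \eta'(t), J_t\rangle. \]
Now $J_t$ is the unit vector obtained from $b(t,\theta)$ by rotating by $\pi/2$, so $\langle \eta'(t), J_t\rangle = \langle \eta'(t), \text{rot}_{\pi/2} b(t,\theta)\rangle$; since $b(t,\theta)$ makes angle $\theta$ with $\eta'(t)$, rotating it by $\pi/2$ makes angle $\theta + \pi/2$, whence the inner product with the unit vector $\eta'(t)$ is $\cos(\theta+\pi/2) = -\sin\theta$. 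Therefore $\tau(\partial_t b, \partial_\theta b) = \sin\theta$, which is exactly the claim $b^*\tau = \sin\theta\, d\theta\, dt$ (up to the orientation convention on $d\theta\wedge dt$ versus $dt\wedge d\theta$, which I would fix to match Otal's sign).

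The main obstacle is essentially bookkeeping rather than a genuine difficulty: one must be careful that the rotation-by-$\theta$ description of $b(t,\theta)$ is correct, that $J_t$ (the $\theta$-derivative) is genuinely a unit normal and that the $t$-derivative's connector piece vanishes because $\eta$ is a geodesic — this last point is where the geodesic equation enters, and it is valid for the $C^{1,\alpha}$ metric $g_0$ because its geodesics are $C^{1,1}$ and the connection is $C^\alpha$, so the covariant derivative along a geodesic of a parallel frame still makes sense and vanishes. I would also remark that since $\tau$ is the limit of the smooth $d\omega^n$ and $b$ depends continuously on the metric, the identity $b^*\tau = \sin\theta\,d\theta\,dt$ alternatively follows by passing to the limit in the known smooth identity $b^*d\omega^n = \sin\theta\,d\theta\,dt$, using Lemma \ref{geoflowconv} and the $C^1$-convergence $g_n \to g_0$; I might present this limiting argument as the cleaner proof and relegate the direct computation to a remark, or vice versa.
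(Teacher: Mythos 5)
Your proof is correct and takes essentially the same approach as the paper: you compute the coordinate vectors $\partial_t b$ and $\partial_\theta b$ via the $(d\pi_{TM},\kappa_v)$ splitting, observe that the $t$-curve is a parallel field along the geodesic $\eta$ (so its connector vanishes) and the $\theta$-curve is a fiber rotation (so its horizontal part vanishes), and plug into the formula for $\tau$, exactly as in the paper's proof. The residual $\pm\sin\theta$ discrepancy you flag is just the $d\theta\wedge dt$ versus $dt\wedge d\theta$ orientation convention and is immaterial.
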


\begin{proof}

Fix $(t, \theta)$ and let $u = b(t, \theta)$. 
Let $\beta_1(t)$ denote the coordinate curve $t \mapsto b(t, \theta)$. This gives a parallel vector field along $\eta$ making fixed angle $\theta$ with $\eta'$. Thus if $V$ is the vector tangent to $\beta_1$ at $u$, we get $\kappa_v V = 0$ and $d \pi V = \eta'(t)$. This latter vector is obtained by rotating $u$ by angle $\theta$, which we will denote by $\theta \cdot u$.  

Next, let $\beta_2(\theta)$ denote the coordinate curve $\theta \mapsto b(t, \theta)$ and let $W$ be the vector tangent to $\beta_2$ at $u$. Since $\beta_2(\theta)$ is a curve in the fiber over $\eta(t)$, which means $d \pi(W) = 0$.
This curve traces out a circle in the unit tangent space, and its tangent vector is thus perpendicular to the circle. This means $\kappa_v (W) = (\pi/2) \cdot u$. 
 
Hence
\[ \tau_{b(t, \theta)} (W, V) = \la \pi/2 \cdot u, \theta \cdot u \ra - \la 0, 0 \ra = \sin \theta, \]
as claimed.
\end{proof}

We now claim the measure on the space of geodesics coming from the symplectic form $\frac{1}{2} \tau$ is equal to the Liouville current. 
Indeed, this follows from \cite[Theorem 2]{otalMLS}. 
To show this theorem is still true for $(M, g_0)$, it suffices to verify the geodesic flow $\phi_0$ satisfies the specification property (see the proof of Proposition \ref{proposition:crMLSshort}, along with Lemma \ref{lemma:spec-const} and Remark \ref{rem:spec}).
Since, as mentioned above, $(M, g_0)$ is a CAT$(- a^2)$ space, the desired specifiation property is given by \cite[Theorem 3.2]{CLTweak}.

Since $(M, g_0)$ and $(M, h_0)$ are CAT$(-a^2)$ spaces, we can define a correspondence of geodesics $\phi: (\partial^2 \tilde M, g_0) \to (\partial^2 \tilde M, h_0)$ as in Construction \ref{bdrymap}. The following fact is still true in this context; see \cite[p. 156]{otalMLS}.

\begin{prop}\label{cvarLC}
Let $\mathcal{G}_v \subset \partial^2  \tilde M$ be a coordinate chart with coordinates $(t, \theta)$ and let $\phi(\mathcal{G}_v) =  \mathcal{G}_{\phi(v)}$ have coordinates $(t, \theta')$. Then $\phi$ takes the measure $\sin \theta \, d \theta \, dt$ to $\sin \theta' \, d \theta' \, dt'$. 
\end{prop}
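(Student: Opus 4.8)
The plan is to follow the argument sketched in the smooth case from \cite[p.~156]{otalMLS}, checking that every ingredient survives the passage to the $C^{1,\alpha}$ limit metrics. The claim is an invariance-of-measure statement: the map $\phi$ on the space of geodesics, built from the boundary correspondence $\overline f$ as in Construction \ref{bdrymap}, carries the measure $\tfrac12\sin\theta\,d\theta\,dt$ (which by Lemma \ref{sintheta} is exactly the Liouville current $\lambda_0$ of $(M,g_0)$ in the coordinates $(t,\theta)$ on $\mathcal G_v$) to the corresponding measure for $(M,h_0)$. Since we have already shown that $(M,g_0)$ and $(M,h_0)$ have the same marked length spectrum, Proposition \ref{crscale} (with $\eps=0$) shows the cross-ratios of $g_0$ and $h_0$ agree, and by the relation $\lambda((a,b)\times(c,d))=\tfrac12[a,b,c,d]$ this forces $\phi_*\lambda_0^{g_0}=\lambda_0^{h_0}$ as measures on $\partial^2\tilde M$. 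The content of the proposition is then the translation of this equality of global measures into the stated pointwise change-of-variables formula in the flow-box coordinates.

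First I would fix $v\in T^1\tilde M$ and the segment $\eta$ of length $T$ with $\eta'(0)=v$, giving the chart $\mathcal G_v$ with coordinates $(t,\theta)$, and likewise fix the chart $\mathcal G_{\phi(v)}$ for $h_0$ with coordinates $(t',\theta')$; here one should note $\phi$ need not respect the footpoint parametrization, so $t'$ is genuinely a new coordinate. By Lemma \ref{sintheta} applied to both metrics, $\lambda_0^{g_0}$ is $\tfrac12\sin\theta\,d\theta\,dt$ and $\lambda_0^{h_0}$ is $\tfrac12\sin\theta'\,d\theta'\,dt'$ on the respective charts. On the other hand, $\phi_*\lambda_0^{g_0}=\lambda_0^{h_0}$ from the previous paragraph. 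Comparing the two expressions for $\lambda_0^{h_0}$ on the overlap $\mathcal G_{\phi(v)}$ gives $\phi_*(\sin\theta\,d\theta\,dt)=\sin\theta'\,d\theta'\,dt'$, which is precisely the assertion. To make this rigorous one should either approximate by the smooth metrics $g_n,h_n$ — for which the statement is \cite[p.~156]{otalMLS} verbatim — and pass to the limit using $\lambda_n\to\lambda_0$ weakly (already noted after Lemma \ref{sintheta}, via Proposition \ref{crscale}) together with the uniform convergence of geodesic flows (Lemma \ref{geoflowconv}) so that the coordinate charts $\mathcal G_v^n$ converge to $\mathcal G_v^0$; or argue directly that any two $\Gamma$-invariant Radon measures on $\partial^2\tilde M$ with the same values on all ``rectangles'' $(a,b)\times(c,d)$ coincide, since such rectangles generate the Borel $\sigma$-algebra.

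I expect the main obstacle to be purely bookkeeping: namely checking that the chart $\mathcal G_{\phi(v)}$ for the limit metric $h_0$ is genuinely a legitimate smooth (or at least $C^1$) coordinate system on an open subset of $\partial^2\tilde M$, so that ``$\phi$ takes one measure to the other'' is a meaningful pointwise statement and not merely an equality of measures. This requires knowing that geodesics of the $C^{1,\alpha}$ metric $h_0$ are $C^{1,1}$ and that transversal intersections with $\eta$ vary continuously (which follows from Pugh \cite{pughc1} and Lemma \ref{geoflowconv}), plus that $\overline f$ is a homeomorphism on boundaries, so that $\phi=\overline f\times\overline f$ is a homeomorphism of the relevant charts and the pushforward measure has a well-defined density. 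Granting these regularity facts — all already assembled earlier in this section — the identity $\phi_*(\sin\theta\,d\theta\,dt)=\sin\theta'\,d\theta'\,dt'$ is forced, and the proof is complete. I would present the argument as: (i) invoke Lemma \ref{sintheta} for $g_0$ and $h_0$; (ii) invoke equality of marked length spectra $\Rightarrow$ equality of cross-ratios $\Rightarrow$ $\phi_*\lambda_0^{g_0}=\lambda_0^{h_0}$; (iii) read off the coordinate identity on the overlapping flow boxes.
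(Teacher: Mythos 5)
Your proof is correct and follows the route the paper intends: the paper's treatment of Proposition \ref{cvarLC} is only a pointer to \cite[p.~156]{otalMLS}, and the material assembled just before the proposition — Lemma \ref{sintheta}, the identification of the $\tfrac12\tau$-measure with the cross-ratio--defined Liouville current (via \cite[Theorem 2]{otalMLS} plus the Anosov closing lemma for $\phi_0$), and Proposition \ref{crscale} with $\eps=0$ applied to $g_0,h_0$ — are exactly the ingredients your argument invokes. Two small points of attribution: the fact that $\tfrac12\sin\theta\,d\theta\,dt$ \emph{is} the Liouville current $\lambda_0$ is not Lemma \ref{sintheta} alone (which gives only $b^*\tau=\sin\theta\,d\theta\,dt$) but that lemma combined with the claim in the paragraph immediately following it, so you should cite that step too; and your ``approximate by $g_n,h_n$'' alternative is imprecise as stated, since those pairs have only $\eps_n$-close, not equal, marked length spectra, so Otal's exact invariance does not hold for them verbatim — the direct argument via equality for the limit metrics $g_0,h_0$ (which you give as the main route) is the correct one.
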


\subsection{Liouville measure}\label{lmeas}

Let $\mu_n$ denote the Liouville measure on $T^1 M$ with respect to the metric $g_n$ on $M$. Let $g_n^S$ denote the associated Sasaki metric on $T^1 M$. Then $\mu_n$ is a constant multiple of the measure arising from the Riemannian volume form of $g_n^S$. In local coordinates, the measure $\mu_n$ can be written in terms of the $g_n^{ij}$ and their first derivatives. Since $g_n^{ij} \to g_0^{ij}$ in the $C^{1, \alpha}$ norm, we see the measures $\mu_n$ converge to a measure $\mu_0$, which is the Riemannian volume associated to the $C^{\alpha}$ Sasaki metric $g_0^S$.
Hence, the measure $\mu_0$ can be written locally as the product $dm \times d \theta$, where $dm$ is the Riemannian volume on $M$ coming from $g_0$, and $d \theta$ is Lebesgue measure on the circle $T^1_p M$.

We now recall the average change in angle function $\Theta': [0, \pi] \to [0, \pi]$ from \cite[Section 2]{otalMLS}.
First Otal considers the function $\theta': T^1 M \times [0, \pi] \to \R$ defined as follows. Given a unit tangent vector $v$ and an angle $\theta$, let $\theta \cdot v$ denote the vector obtained by rotating $v$ by $\theta$. Consider lifts of the geodesics determined by $v$ and $\theta \cdot v$ passing through the same point in $ \tilde M$. The correspondence of geodesics $\phi$ (see above Proposition \ref{cvarLC} and Construction \ref{bdrymap}) takes intersecting geodesics to intersecting geodesics (since $\dim M = 2$). Let $\theta'(\theta, v)$ denote the angle between the image geodesics in $(\tilde M, h_0)$ at their point of intersection. 
Finally, let $\Theta'(\theta) = \int_{T^1 M} \theta'(\theta, v) d \mu_0(v)$.

The function $\Theta'$ satisfies symmetry and subadditivity properties \cite[Proposition 6]{otalMLS}. Indeed, the proof of \cite[Proposition 6]{otalMLS} uses the above local product structure of the Liouville measure along with the fact that in negative curvature, the angle sum of a non-degenerate geodesic triangle is strictly less than $\pi$.
As mentioned before, this latter fact holds for CAT$(-a^2)$ spaces as well \cite[Proposition II.1.7.4]{bh13nonpos}.

To deduce the third key property of $\Theta'$ (see \cite[Proposition 7]{otalMLS} for the exact statement), we require the following fact about $\mu_0$, which holds by \cite{sig1972} in the original smooth case. Since $\phi_0$ is a geodesic flow on a CAT(--1) space, it satisfies a sufficiently strong specification property such that the proof of \cite{sig1972} works verbatim in this context; see \cite[Theorem 3.2, Lemma 4.5]{CLTweak}.

\begin{prop}\label{sig}
Let $f: T^1 M \to \R$ be a continuous function. Let $\eps > 0$. Then there is a closed geodesic $\gamma_0$ so that 
\[ \left| \int_{T^1 M} f \, d \mu_0 - \frac{1}{l_{g_0} (\gamma_0)} \int_{\gamma_0} f \, dt \right| < \eps. \]
\end{prop}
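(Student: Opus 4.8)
\textbf{Proof proposal for Proposition \ref{sig}.}

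The plan is to reduce the statement for the limiting flow $\phi_0$ to the classical result of Sigmund \cite{sig1972}, which asserts that for a topologically transitive flow satisfying the specification property, the invariant measures supported on periodic orbits are weak-$*$ dense in the space of all invariant probability measures. First I would note that the normalized Liouville measure $\bar\mu_0 = \mu_0 / \mu_0(T^1 M)$ is a $\phi_0$-invariant Borel probability measure: invariance follows because $\mu_0$ is the weak-$*$ limit of the $\phi_n$-invariant Liouville measures $\mu_n$ (as established in Section \ref{lmeas}) together with the uniform convergence $\phi_n^t v \to \phi_0^t v$ on compact sets from Lemma \ref{geoflowconv}; a standard approximation argument then upgrades invariance under each $\phi_n$ to invariance under $\phi_0$. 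Hence it suffices to approximate $\bar\mu_0$ in the weak-$*$ topology by periodic orbit measures, since a continuous function $f$ on the compact space $T^1 M$ is bounded, and testing against $f$ is exactly what the weak-$*$ convergence provides, up to the normalizing factor $\mu_0(T^1 M)$ (which we absorb by rescaling or by noting $f/\mu_0(T^1M)$ is again continuous).

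The key input is that the $\phi_0$-geodesic flow satisfies a specification property strong enough for Sigmund's theorem to apply. Here I would invoke the fact that $(M, g_0)$ is a $\mathrm{CAT}(-\lambda^2)$ space (indeed $\mathrm{CAT}(-1)$ after rescaling), so its geodesic flow is a transitive flow on a compact metric space admitting the weak specification property; the relevant statement is \cite[Theorem 3.2, Lemma 4.5]{CLTweak}, which verifies exactly this for geodesic flows on $\mathrm{CAT}(-1)$ spaces. Sigmund's argument in \cite{sig1972} — or its adaptation to flows with weak specification, again as recorded in \cite{CLTweak} — then shows that given $f$ and $\eps$, there is a periodic orbit of $\phi_0$, i.e.\ a closed $g_0$-geodesic $\gamma_0$, whose orbit measure $\frac{1}{l_{g_0}(\gamma_0)}\int_{\gamma_0} f\,dt$ is within $\eps$ of $\int_{T^1 M} f\, d\mu_0$. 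Transitivity of $\phi_0$ needed as a hypothesis follows from the density of forward and backward orbits, which is inherited in the limit (or is immediate from the mixing of geodesic flows on $\mathrm{CAT}(-1)$ spaces).

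The main obstacle is purely one of bookkeeping rather than substance: the classical specification machinery is usually stated for Anosov or uniformly hyperbolic flows, whereas $\phi_0$ is only a Lipschitz flow on a $\mathrm{CAT}(-\lambda^2)$ space with $C^{1,1}$ geodesics, so I must be careful to cite a version of specification (and of Sigmund's density theorem) that is valid at this level of regularity. This is precisely the role of \cite{CLTweak}, whose weak specification property holds for geodesic flows on compact $\mathrm{CAT}(-1)$ spaces and is strong enough to run Sigmund's approximation scheme. A secondary, minor point is to confirm that the ``closed geodesic'' produced by the abstract periodic orbit is a genuine closed geodesic in $M$ — this is automatic since periodic orbits of the geodesic flow on $T^1 M$ are in bijection with closed geodesics, and the flow $\phi_0$ is the geodesic flow of $g_0$ by construction. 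Once these regularity issues are handled, the proof is a direct quotation of Sigmund's theorem applied to $\bar\mu_0$.
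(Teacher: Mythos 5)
Your proposal matches the paper's argument: the paper proves Proposition \ref{sig} by exactly the same route, citing Sigmund \cite{sig1972} for density of periodic orbit measures among invariant measures, and citing \cite[Theorem 3.2, Lemma 4.5]{CLTweak} for the weak specification property of geodesic flows on compact $\mathrm{CAT}(-1)$ spaces, which makes Sigmund's argument go through at the $C^{1,\alpha}$ regularity of $g_0$. The extra remarks you include — that $\bar\mu_0$ is $\phi_0$-invariant as a weak-$*$ limit of $\phi_n$-invariant measures, and that periodic orbits of $\phi_0$ are exactly closed $g_0$-geodesics — are correct bookkeeping points that the paper leaves implicit.
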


%i don;t use this
%\begin{lem}
%The measure $\mu_0$ is invariant with respect to the $g_0$-geodesic flow.
%\end{lem}
%
%\begin{proof}
%Let $\phi_n: \R \times T^1 M \to T^1 M$ be the $g_n$-geoedsic flow and let $\phi_0$ be the $g_0$-geodesic flow. Then for $(t, v)$ in a fixed compact subset of $\R \times T^1 M$, we know $\phi_n \to \phi_0$ uniformly (because Pugh proves the exponential maps converge uniformly). 
%
%For any $t$, we then have $(\phi^t_n)_* \to (\phi_0^t)_* \mu_0$. The claim then follows from the fact that each $\mu_n$ in invariant with respect to $\phi_n$. 
%\end{proof}

\subsection{Constructing a distance-preserving map $f: (M, g_0) \to (M, h_0)$}

Using Propositions \ref{cvarLC} and \ref{sig}, the proof of \cite[Proposition 7]{otalMLS} shows the hypotheses of \cite[Lemma 8]{otalMLS} are satisfied. 
Thus, the function $\Theta'$ defined at the beginning of Section \ref{lmeas} is the identity. 
From this, it follows that $\phi$ takes triples of geodesics intersecting in a single point to triples of geodesics intersecting in a single point; see the proof of \cite[Theorem 1]{otalMLS}. 
We then define $f:(M, g_0) \to (M, h_0)$ exactly as in \cite{otalMLS}: 
given $p \in \tilde M$, take any two geodesics through $p$. 
Then their images under $\phi$ must also intersect in a single point, which we call $\tilde f(p)$. 
Then $\tilde f$ is distance-preserving and $\Gamma$-equivariant by the same argument as in \cite{otalMLS}.

This proves Proposition \ref{MLSrig}, and hence Theorem \ref{mainthm2} is proved.

\bibliographystyle{amsalpha}
\bibliography{quantMLSsymspaces-06-2025}

\newcommand{\etalchar}[1]{$^{#1}$}
\providecommand{\bysame}{\leavevmode\hbox to3em{\hrulefill}\thinspace}
\providecommand{\MR}{\relax\ifhmode\unskip\space\fi MR }
% \MRhref is called by the amsart/book/proc definition of \MR.
\providecommand{\MRhref}[2]{%
  \href{http://www.ams.org/mathscinet-getitem?mr=#1}{#2}
}
\providecommand{\href}[2]{#2}
\begin{thebibliography}{BKB{\etalchar{+}}85}

\bibitem[Bal95]{ballmann}
Werner Ballmann, \emph{Lectures on spaces of nonpositive curvature}, vol.~25,
  Springer Science \& Business Media, 1995.

\bibitem[BCG95]{BCGGAFA}
G{\'e}rard Besson, Gilles Courtois, and Sylvestre Gallot, \emph{Entropies et
  rigidit{\'e}s des espaces localement sym{\'e}triques de courbure strictement
  n{\'e}gative}, Geometric \& Functional Analysis GAFA \textbf{5} (1995),
  no.~5, 731--799.

\bibitem[BCG96]{BCGnegcurv}
\bysame, \emph{Minimal entropy and {M}ostow's rigidity theorems}, Ergodic
  Theory and Dynamical Systems \textbf{16} (1996), no.~4, 623--649.

\bibitem[BH13]{bh13nonpos}
Martin~R Bridson and Andr{\'e} Haefliger, \emph{Metric spaces of non-positive
  curvature}, vol. 319, Springer, 2013.

\bibitem[BK84]{brinkarcher}
Michael Brin and H~Karcher, \emph{Frame flows on manifolds with pinched
  negative curvature}, Compositio Mathematica \textbf{52} (1984), no.~3,
  275--297.

\bibitem[BKB{\etalchar{+}}85]{burnskatok}
Keith Burns, Anatole Katok, W~Ballman, M~Brin, P~Eberlein, and R~Osserman,
  \emph{Manifolds with non-positive curvature}, Ergodic Theory and Dynamical
  Systems \textbf{5} (1985), no.~2, 307--317.

\bibitem[Bou95]{bourdon2}
Marc Bourdon, \emph{Structure conforme au bord et flot geodesique d'un cat
  (-1)-espace}, L'Enseignement Math \textbf{41} (1995), 63--102.

\bibitem[Bou96]{bourdon}
\bysame, \emph{Sur le birapport au bord des {CAT}(-1)-espaces}, Publications
  Math{\'e}matiques de l'IH{\'E}S \textbf{83} (1996), 95--104.

\bibitem[BP92]{bp}
Riccardo Benedetti and Carlo Petronio, \emph{Lectures on hyperbolic geometry},
  Springer, 1992.

\bibitem[BT82]{bott-tu}
Raoul Bott and Loring~W. Tu, \emph{Differential forms in algebraic topology},
  Springer, 1982.

\bibitem[Bur83]{keiththesis}
Keith Burns, \emph{Hyperbolic behaviour of geodesic flows on manifolds with no
  focal points}, University of Wawrick (1983).

\bibitem[But22]{butt22finite}
Karen Butt, \emph{Approximate control of the marked length spectrum by short
  geodesics}, arXiv preprint math/2210.15101 (2022).

\bibitem[CD04]{croke2004lengths}
Christopher~B Croke and Nurlan~S Dairbekov, \emph{Lengths and volumes in
  {R}iemannian manifolds}, Duke Mathematical Journal \textbf{125} (2004),
  no.~1, 1--14.

\bibitem[CF03]{connellfarb}
Christopher Connell and Benson Farb, \emph{The degree theorem in higher rank},
  Journal of Differential Geometry \textbf{65} (2003), no.~1, 19--59.

\bibitem[CLT20]{CLTweak}
David Constantine, Jean-Fran{\c{c}}ois Lafont, and Daniel~J Thompson, \emph{The
  weak specification property for geodesic flows on {CAT} (--1) spaces},
  Groups, Geometry, and Dynamics \textbf{14} (2020), no.~1, 297--336.

\bibitem[Cro90]{croke90}
Christopher~B Croke, \emph{Rigidity for surfaces of non-positive curvature},
  Commentarii Mathematici Helvetici \textbf{65} (1990), no.~1, 150--169.

\bibitem[dC92]{docarmo}
Manfredo~Perdigao do~Carmo, \emph{Riemannian geometry}, Birkh{\"a}user, 1992.

\bibitem[FH19]{fisherhass}
Todd Fisher and Boris Hasselblatt, \emph{Hyperbolic flows}, 2019.

\bibitem[FJ89]{farrell-jones-mostow}
F~Thomas Farrell and Lowell~E Jones, \emph{A topological analogue of
  {M}ostow’s rigidity theorem}, Journal of the American Mathematical Society
  \textbf{2} (1989), no.~2, 257--370.

\bibitem[FO09]{FOannals}
F~Thomas Farrell and Pedro Ontaneda, \emph{The teichm{\"u}ller space of pinched
  negatively curved metrics on a hyperbolic manifold is not contractible},
  Annals of mathematics (2009), 45--65.

\bibitem[GdlH13]{gdlh}
Etienne Ghys and Pierre da~la Harpe, \emph{Sur les groupes hyperboliques
  d’apres mikhael gromov}, vol.~83, Springer Science \& Business Media, 2013.

\bibitem[GHL90]{gallot}
Sylvestre Gallot, Dominique Hulin, and Jacques Lafontaine, \emph{Riemannian
  geometry}, vol.~2, Springer, 1990.

\bibitem[GKL22]{GKL22}
Colin Guillarmou, Gerhard Knieper, and Thibault Lefeuvre, \emph{Geodesic
  stretch, pressure metric and marked length spectrum rigidity}, Ergodic Theory
  and Dynamical Systems \textbf{42} (2022), no.~3, 974--1022.

\bibitem[GL19]{GL19}
Colin Guillarmou and Thibault Lefeuvre, \emph{The marked length spectrum of
  {A}nosov manifolds}, Annals of Mathematics \textbf{190} (2019), no.~1,
  321--344.

\bibitem[GL21]{finitelivsic}
S{\'e}bastien Gou{\"e}zel and Thibault Lefeuvre, \emph{Classical and microlocal
  analysis of the x-ray transform on {A}nosov manifolds}, Analysis \& PDE
  \textbf{14} (2021), no.~1, 301--322.

\bibitem[Gro83]{gromov1983filling}
Mikhael Gromov, \emph{Filling {R}iemannian manifolds}, Journal of Differential
  Geometry \textbf{18} (1983), no.~1, 1--147.

\bibitem[Gro87]{gromov-hyp}
\bysame, \emph{Hyperbolic groups}, Essays in group theory, Springer, 1987,
  pp.~75--263.

\bibitem[Gro00]{gromov3rmks}
Mikha{\i}l Gromov, \emph{Three remarks on geodesic dynamics and fundamental
  group}, Enseign. Math.(2) \textbf{46} (2000), 391--402.

\bibitem[GW88]{greenewu}
Robert Greene and Hung-Hsi Wu, \emph{Lipschitz convergence of {R}iemannian
  manifolds}, Pacific {J}ournal of {M}athematics \textbf{131} (1988), no.~1,
  119--141.

\bibitem[Ham92]{hamconj}
Ursula Hamenst{\"a}dt, \emph{Time-preserving conjugacies of geodesic flows},
  Ergodic Theory and Dynamical Systems \textbf{12} (1992), no.~1, 67--74.

\bibitem[Ham94]{hamreg}
\bysame, \emph{Regularity at infinity of compact negatively curved manifolds},
  Ergodic Theory and Dynamical Systems \textbf{14} (1994), no.~3, 493--514.

\bibitem[Ham99]{ham99symplectic}
\bysame, \emph{Cocycles, symplectic structures and intersection}, Geometric \&
  Functional Analysis GAFA \textbf{9} (1999), no.~1, 90--140.

\bibitem[Has94]{hass94}
Boris Hasselblatt, \emph{Horospheric foliations and relative pinching}, Journal
  of Differential Geometry \textbf{39} (1994), no.~1, 57--63.

\bibitem[HIH77]{HIH}
Ernst Heintze and Hans-Christoph Im~Hof, \emph{Geometry of horospheres},
  Journal of Differential geometry \textbf{12} (1977), no.~4, 481--491.

\bibitem[HP75]{hirschpugh1975C1}
Morris~W Hirsch and Charles~C Pugh, \emph{Smoothness of horocycle foliations},
  Journal of Differential Geometry \textbf{10} (1975), no.~2, 225--238.

\bibitem[HP97]{hersonskypaulin}
Sa'ar Hersonsky and Fr{\'e}d{\'e}ric Paulin, \emph{On the rigidity of discrete
  isometry groups of negatively curved spaces}, Commentarii Mathematici
  Helvetici \textbf{72} (1997), no.~3, 349--388.

\bibitem[Kai90]{kaimanovich90invariant}
Vadim~A Kaimanovich, \emph{Invariant measures of the geodesic flow and measures
  at infinity on negatively curved manifolds}, Annales de l'IHP Physique
  th{\'e}orique \textbf{53} (1990), no.~4, 361--393.

\bibitem[Kat90]{skatok}
Svetlana Katok, \emph{Approximate solutions of cohomological equations
  associated with some {A}nosov flows}, Ergodic Theory and Dynamical Systems
  \textbf{10} (1990), no.~2, 367--379.

\bibitem[KH97]{hasskatok}
Anatole Katok and Boris Hasselblatt, \emph{Introduction to the modern theory of
  dynamical systems}, no.~54, Cambridge {U}niversity {P}ress, 1997.

\bibitem[Liv95]{liverani}
Carlangelo Liverani, \emph{Decay of correlations}, Annals of Mathematics
  \textbf{142} (1995), no.~2, 239--301.

\bibitem[Mar69]{margulis1969}
Grigorii~Aleksandrovich Margulis, \emph{Applications of ergodic theory to the
  investigation of manifolds of negative curvature}, Funktsional'nyi Analiz i
  ego Prilozheniya \textbf{3} (1969), no.~4, 89--90.

\bibitem[Mos73]{mostow}
G~Daniel Mostow, \emph{Strong rigidity of locally symmetric spaces}, no.~78,
  University of {T}okyo {P}ress, 1973.

\bibitem[Ota90]{otalMLS}
Jean-Pierre Otal, \emph{Le spectre marqu{\'e} des longueurs des surfaces {\`a}
  courbure n{\'e}gative}, Annals of Mathematics \textbf{131} (1990), no.~1,
  151--162.

\bibitem[Ota92]{otalsymplectic}
\bysame, \emph{Sur la g{\'e}om{\'e}trie symplectique de l'espace des
  g{\'e}od{\'e}siques d'une vari{\'e}t{\'e} {\`a} courbure n{\'e}gative},
  Revista matem{\'a}tica iberoamericana \textbf{8} (1992), no.~3, 441--456.

\bibitem[Pet06]{petersen}
Peter Petersen, \emph{Riemannian geometry. second edition}, vol. 171, Springer,
  2006.

\bibitem[PPS12]{PPS}
Fr{\'e}d{\'e}ric Paulin, Mark Pollicott, and Barbara Schapira,
  \emph{Equilibrium states in negative curvature}, arXiv preprint
  arXiv:1211.6242 (2012).

\bibitem[Pug87]{pughc1}
Charles~C Pugh, \emph{The ${C}^{1, 1}$ conclusions in {G}romov's theory},
  Ergodic Theory and Dynamical Systems \textbf{7} (1987), no.~1, 133--147.

\bibitem[Rob03]{roblin}
Thomas Roblin, \emph{Ergodicit{\'e} et {\'e}quidistribution en courbure
  n{\'e}gative}, no.~95, Soci{\'e}t{\'e} math{\'e}matique de France, 2003.

\bibitem[Rua22]{yupingoct}
Yuping Ruan, \emph{The {C}ayley hyperbolic space and the volume entropy
  rigidity}, in preparation (2022).

\bibitem[Sig72]{sig1972}
Karl Sigmund, \emph{On the space of invariant measures for hyperbolic flows},
  American Journal of Mathematics \textbf{94} (1972), no.~1, 31--37.

\bibitem[Thu98]{thurston98minimal}
William~P Thurston, \emph{Minimal stretch maps between hyperbolic surfaces},
  arXiv preprint math/9801039 (1998).

\end{thebibliography}

\end{document}